\DeclareMathAlphabet{\dutchcal}{U}{dutchcal}{m}{n} % Define Dutchcal command
\numberwithin{equation}{section}
\definecolor{darkred}{rgb}{0.8,0,0}
\definecolor{darkgreen}{rgb}{0,0.8,0}
\DeclareMathOperator{\trace}{\operatorname{trace}}
\DeclareMathOperator{\Span}{\operatorname{span}}
\DeclareMathOperator{\sign}{\operatorname{sign}}
\DeclareMathOperator{\diag}{\operatorname{diag}}
\DeclareMathOperator{\supp}{\operatorname{supp}}
\newcommand\green{\cellcolor{darkgreen!20}}
\newcommand\red{\cellcolor{darkred!20}}
\newcommand\gray{\cellcolor{gray!20}}
\newcommand{\rs}{\mathcal{L}}
\newcommand{\rsc}{\stackinset{c}{-1pt}{c}{0pt}{\small $\circ$}{$\mathcal{L}$}}
\theoremstyle{definition}
\newtheorem{definition}{Definition}[section]
\newtheorem{remark}[definition]{Remark}
\newtheorem{example}[definition]{Example}
\newtheorem{theorem}[definition]{Theorem}
\newtheorem{lemma}[definition]{Lemma}
\newtheorem{corollary}[definition]{Corollary}
\newtheorem{conjecture}[definition]{Conjecture}
\crefname{definition}{Definition}{Definitions}
\crefname{remark}{Remark}{Remarks}
\crefname{example}{Example}{Examples}
\crefname{theorem}{Theorem}{Theorems}
\crefname{lemma}{Lemma}{Lemmas}
\crefname{corollary}{Corollary}{Corollaries}
\crefname{conjecture}{Conjecture}{Conjectures}
\title{On the prospects of interpolatory spline bases for accurate mass lumping strategies in isogeometric analysis}
\author[1]{Yannis Voet \thanks{yannis.voet@epfl.ch}}
\author[2]{Espen Sande \thanks{sande@simula.no}}
\affil[1]{\small MNS, Institute of Mathematics, École Polytechnique Fédérale de Lausanne, Station 8, CH-1015 Lausanne, Switzerland}
\affil[2]{Department of Numerical Analysis and Scientific Computing, Simula Research Laboratory, Oslo, Norway}
\date{\today}
\begin{document}

\maketitle

\begin{abstract}
While interpolatory bases such as the Lagrange basis form the cornerstone of classical finite element methods, they have been replaced in the more general finite element setting of isogeometric analysis in favor of other desirable properties. Yet, interpolation is a key property for devising accurate mass lumping strategies that are ubiquitous in explicit dynamic analyses of structures. In this article, we explore the possibility of restoring interpolation for spline bases within isogeometric analysis for the purpose of mass lumping. Although reminiscent of the spectral element method, this technique comes with its lot of surprises and challenges, which are critically assessed.

\noindent \textbf{Keywords}:
Isogeometric analysis, Explicit dynamics, Mass lumping, Interpolation, Quadrature.
\end{abstract}

\section{Introduction}
The finite element method (FEM) is the tool of the trade for approximating the solution of partial differential equations (PDEs) describing countless physical processes in fluid dynamics, heat transfer and wave propagation to name just a few. Nevertheless, the solution process still requires considerable computer resources and manual interventions, incurring significant costs and slowing down the design process in industrial engineering applications. One of the major bottlenecks is attributed to the poor communication between computer-aided-design (CAD) and finite element analysis (FEA) that grew from separate communities and rely on vastly different technologies. Isogeometric analysis (IGA) has promised to unite the two communities by employing smooth spline functions from CAD such as B-splines both for representing the approximate solution and describing the geometry \cite{hughes2005isogeometric, cottrell2009isogeometric}. Apart from streamlining the design process, spline spaces also have vastly superior approximation properties \cite{bazilevs2006isogeometric,bressan2019approximation,sande2019sharp,sande2020explicit}, as proved in \cite{bressan2019approximation} and observed in numerous applications, including fluid dynamics \cite{tagliabue2014isogeometric,nitti2020immersed}, structural mechanics \cite{cottrell2006isogeometric,cottrell2007studies,hughes2008duality,hughes2014finite}, and phase field modeling \cite{borden2014higher,greco2024higher}.

However, some of the early promises of IGA have still not been realized: the CAD and FEA communities are mostly evolving independently of one another, partly due to their distinct needs. Moreover, IGA has not alleviated some of the older issues of traditional FEA. Most notably, the existence of a mass matrix has always bothered structural engineers and for a good reason: explicit time integration of time-dependent PDEs in structural dynamics leads to solving a linear system with the mass matrix at each time step, a problem that was never encountered with finite differences. The repeated solution of those linear systems has long been acknowledged as one of the most expensive steps in the solution process and is in fact further exacerbated in IGA, whether those linear systems are solved directly or iteratively \citep{collier2012cost, collier2013cost}. While small to medium size applications might rely on a matrix factorization, such an approach becomes plainly infeasible for larger problems. Thus, instead of solving those linear systems ``exactly'', practitioners resort to ad hoc approximations, with mass lumping being one of its best known examples. 

Mass lumping has a long history and consists in replacing the mass matrix in the time integration scheme by some diagonal approximation. Common strategies include the row-sum technique \cite{hughes2012finite}, the Hinton-Rock-Zienkiewicz (HRZ) or diagonal scaling technique \cite{hinton1976note} and the nodal quadrature method \citep{fried1975finite, cohen1994higher}, also exploited within the spectral element method (SEM). Although some of them are sometimes equivalent \cite{duczek2019mass}, the nodal quadrature method is the only ``consistent'' lumping technique and constructs a diagonal mass matrix by choosing as finite element nodes the quadrature nodes of the Gauss-Lobatto rule. The method (nearly) preserves the convergence properties of the consistent mass and delivers positive definite lumped mass matrices if all quadrature weights are positive, a condition easily fulfilled in 1D and straightforwardly extended to multiple dimensions for tensor product elements. Specialized techniques have also been developed for more general elements \cite{cohen2001higher}. Apart from constructing diagonal matrices, mass lumping techniques are also praised for increasing the critical time step, also known as the Courant–Friedrichs–Lewy (CFL) condition. Unfortunately, not all of them are applicable to more general bases encountered for instance in IGA \citep{hughes2005isogeometric, cottrell2009isogeometric}. In particular, the nodal quadrature method does not have an immediate counterpart for non-interpolatory spline bases and alternative techniques have been investigated. 

Very soon after introducing IGA, Cottrell et al. \cite{cottrell2009isogeometric} examined the row-sum technique. Its algebraic nature allows applying it to the isogeometric mass matrix and furthermore ensures positive definite lumped mass matrices owing to the positivity of the B-spline basis. Unfortunately though, contrary to SEM, it severely deteriorates the accuracy of the smallest eigenfrequencies, which converge at a reduced second order rate independently of the spline degree. To make matters worse, increasing the spline degree actually deteriorates the constant. Those observations, also confirmed in numerous subsequent articles \cite{anitescu2019isogeometric,nguyen2023towards,voet2023mathematical}, have drawn much attention but a general proof is still lacking. The accuracy of the smallest eigenfrequencies further deteriorates on trimmed geometries \cite{coradello2021accurate,radtke2024analysis,bioli2025theoretical}, where the associated modes may even cause spurious oscillations in the solution \cite{voet2025stabilization,guarino2025stabilization}. Thus, many authors have tried to improve in one way or another the accuracy of the row-sum technique.

In \cite{voet2023mathematical,voet2025mass}, the authors proved that the eigenfrequencies for the row-sum lumped mass always underestimate those for a nonnegative consistent mass, thereby also ensuring a larger critical time step. The authors then constructed a sequence of banded (or block-banded) matrices converging to the consistent mass and monotonically improving the eigenfrequency approximation from below. While those constructions significantly reduced the eigenfrequency error and visibly improved the accuracy, they did not improve the convergence rate. Nevertheless, they are among the most versatile strategies, applicable to single-patch, multi-patch as well as trimmed geometries.

Realizing that the B-spline basis may be inadequate for mass lumping, many authors are turning to different bases for the test and/or trial spaces. Two good examples include approximate $L^2$ dual bases \cite{anitescu2019isogeometric,nguyen2023towards} and interpolatory spline bases \cite{li2022significance,li2024interpolatory}. The former applies the row-sum technique in a Petrov-Galerkin framework by choosing classical (approximate) dual functions (see \cite{chui2004nonstationary} and \cite[Chapter~4.6]{Schumaker2007}) as test functions. Although the idea, whose origins also date back to Cottrell \cite{cottrell2009isogeometric}, initially did not attract much attention, it was taken up again recently in \cite{anitescu2019isogeometric} with promising results. Since then, there has been a surge of interest in approximate dual functions \cite{nguyen2023towards, hiemstra2025higher, held2024efficient, nguyen2023higher}. This strategy produces optimally accurate lumped mass approximations but unfortunately introduces many other complications. In particular, approximate duality only holds in the parametric domain and ensuring it also holds in the physical domain complicates the assembly of the stiffness matrix \cite{nguyen2023towards}. Moreover, imposing essential boundary conditions requires ad hoc techniques \cite{hiemstra2025higher} and extending them to multi-patch or trimmed geometries might be difficult. Finally, Petrov-Galerkin methods lead to non-symmetric system matrices. While losing symmetry is not an issue per se, giving up this property to attain high order accuracy is somewhat surprising. In any case, the dual lumping method remains one of the few instances of high order mass lumping strategies in IGA.

In another line of research, some authors \cite{li2022significance, li2024interpolatory} have recently tried mimicking the nodal quadrature method by resorting to the classical interpolatory spline bases described in, e.g., \cite{Schoenberg1972,schoenberg1973cardinal}. Although the method leads to sub-optimal convergence rates, it still offers great promises, also in developing a theory that parallels SEM. Unfortunately, restoring interpolation comes with its own lot of difficulties, which were neither mentioned in \cite{li2022significance} nor in their follow-up work \cite{li2024interpolatory}. Apart from the evident sub-optimal convergence rates, we have identified three major issues:
\begin{enumerate}[noitemsep]
    \item Positive definite lumped mass matrices are not guaranteed and critically depend on the choice of interpolation points. This problem on its own may already jeopardize the method, since indefinite lumped mass matrices lead to unstable, potentially diverging, solutions.
    \item The classical interpolatory basis functions used in \cite{li2022significance, li2024interpolatory} are globally supported, which translate into dense stiffness matrices and prohibitive storage requirements.
    \item Similarly to classical Lagrange bases, the mass matrix for interpolatory spline bases may have negative off-diagonal entries and thus the impact of mass lumping on the CFL condition is not immediately clear.
\end{enumerate}

To our knowledge, this method is not related to the Lagrange extraction technique proposed in \cite{schillinger2016lagrange,nguyen2017collocated}, where the authors perform an elementwise change of basis to a polynomial Lagrange basis defined on the same Gauss-Lobatto points as in SEM.

To this date, despite significant improvements, the lack of effective mass lumping techniques in IGA remains an open problem. In view of the issues raised above, we further investigate whether interpolatory spline bases are a realistic option. As we will see, while some problems are easily resolved, others are much more serious.

The rest of the article is structured as follows: after introducing our model problem and recalling some basic concepts in \Cref{se: problem_statement}, we review in \Cref{se: mass_lumping} some of the best known mass lumping strategies for classical FEM. As far as we know, some of the results in that section are new and help identify suitable basis properties. \Cref{se: high_order_mass_lumping} is the core of the article and explores how to extend those properties to spline spaces within IGA. After presenting the construction of interpolatory spline bases, we carefully examine the aforementioned issues, one at a time. Theoretical as well as computational issues are discussed and a complete algorithm is presented at the end of this section. Although most of our results are limited to single-patch geometries, some also apply to multi-patch ones. Numerical experiments follow in \Cref{se: numerical_experiments} to validate the theoretical results and provide additional insights. Finally, we state our conclusions in \Cref{se: conclusion} and outline several directions for future research.

\section{Problem statement}
\label{se: problem_statement}
In this article, we approximate the solution of time-dependent PDEs from structural dynamics. The simplest and best known example is the wave equation, which serves as model problem for acoustic, elastic and electromagnetic wave propagation. Let $\Omega \subset \mathbb{R}^d$ be an open connected domain with Lipschitz boundary $\partial \Omega$ and let $[0,T]$ be the time domain with $T>0$ denoting the final time. We look for $u \colon \Omega \times [0,T] \to \mathbb{R}$ such that 
\begin{align}
 \rho(\bm{x})\partial_{tt} u(\bm{x},t)-\nabla \cdot (\kappa(\bm{x})\nabla u(\bm{x},t)) &=f(\bm{x},t) & &\text{ in } \Omega \times (0,T], \label{eq: wave_equation} \\
 u(\bm{x},t)&=0 & &\text{ on } \partial \Omega \times (0,T], \nonumber\\
 u(\bm{x},0)&=u_0(\bm{x}) & &\text{ in } \Omega,  \nonumber\\
 \partial_t u(\bm{x},0)&=v_0(\bm{x}) & &\text{ in } \Omega, \nonumber
\end{align}
where $u_0$ and $v_0$ are initial conditions on the solution and its first time derivative, respectively, and $\rho$ and $\kappa$ are positive-valued coefficient functions, often material-dependent. To simplify the presentation, we only prescribe homogeneous Dirichlet boundary conditions. The Galerkin method seeks an approximate solution $u_h(.,t)$ of $u(.,t)$ in a finite dimensional subspace $V_h$. In the classical FEM, this subspace is a space of continuous polynomials. In more recent developments, Hughes et al. \cite{hughes2005isogeometric} proposed using a spline space; i.e., a smooth space of piecewise polynomials. This choice is at the heart of IGA. In either case, once a basis $\Phi=\{\varphi_1,\dots,\varphi_n\}$ is chosen for $V_h$, discretizing in space the weak form of the PDE leads to solving a system of ordinary differential equations (see for instance \citep{hughes2012finite, quarteroni2009numerical})
\begin{align}
\label{eq: semi_discrete_pb}
\begin{split}
M\ddot{\bm{u}}(t) + K\bm{u}(t) &= \bm{f}(t) \qquad \text{for } t \in [0,T], \\
\bm{u}(0) &= \bm{u}_0,\\
\dot{\bm{u}}(0) &= \bm{v}_0,
\end{split}
\end{align}
where $\bm{u}(t)$ is the coefficient vector of $u_h(.,t)$ in the basis $\Phi$ and the so-called stiffness and mass matrices are given by 
\begin{equation*}
    K_{ij} = a(\varphi_i,\varphi_j) \quad \text{and} \quad M_{ij} = b(\varphi_i,\varphi_j)
\end{equation*}
for the bilinear forms $a,b \colon V_h \times V_h \to \mathbb{R}$ defined as
\begin{equation}
\label{eq: bilinear_forms}
    a(u,v)=\int_{\Omega} \kappa(\bm{x}) \nabla u(\bm{x}) \cdot \nabla v(\bm{x}) \,d\bm{x} \quad \text{and} \quad b(u,v) = \int_{\Omega} \rho(\bm{x}) u(\bm{x}) v(\bm{x}) \,d\bm{x},
\end{equation}
where $u,v \in V_h$. Similarly, the right-hand side $\bm{f}(t)$ is defined as
\begin{equation*}
    f_i(t) = F(\varphi_i)
\end{equation*}
for the linear functional $F \colon V_h \to \mathbb{R}$
\begin{equation*}
    F(v) = \int_{\Omega} f(\bm{x},t) v(\bm{x}) \,d\bm{x},
\end{equation*}
where $v \in V_h$. Regardless of the basis, the stiffness and mass matrices $K$ and $M$ are both symmetric and while $M$ is always positive definite, $K$ is generally only positive semidefinite (unless Dirichlet boundary conditions are prescribed on some portion of the boundary). However, to ensure sparsity, compactly supported basis functions are sought. Well-known examples include the Lagrange basis for classical FEM and the B-spline basis for IGA. Although a host of other choices are possible, also among the spline ``zoo'', we will restrict our discussion to these two choices that form a partition of unity. The Lagrange basis functions are typically constructed over a reference element $\hat{\Omega}$ before being defined on the physical elements and ``glued together'' across element boundaries. More specifically, in 1D, given $p+1$ distinct interpolation points $\{\hat{x}_i\}_{i=0}^p$ in $\hat{\Omega} = [-1,1]$, the Lagrange basis functions over the reference element are defined as
\begin{equation*}
    \hat{\varphi}_j(\hat{x}) = \prod_{\substack{i=0 \\ i \neq j}}^q \frac{(\hat{x}-\hat{x}_i)}{(\hat{x}_j-\hat{x}_i)}.
\end{equation*}
To ensure coupling between elements and the imposition of boundary conditions, we require that $\hat{x}_0=-1$ and $\hat{x}_q=1$. Denoting $F_e \colon \hat{\Omega} \to \Omega_e$ the bijective (affine) map from the reference to the physical element, the basis functions on $\Omega_e$ are defined as $\varphi_i = \hat{\varphi}_i \circ F_e^{-1}$ and are then coupled together across neighboring elements. Although specific 2D and 3D finite elements exist, we will mostly focus on tensor product elements $\hat{\Omega} = [-1,1]^d$, obtained by repeating this construction along separate parametric directions, which leads to quadrilateral elements in 2D and hexahedral elements in 3D. In dimension $d$, tensor product basis functions are defined as
\begin{equation*}
\hat{\varphi}_{\bm{i}}=\hat{\varphi}_{1,i_1}\hat{\varphi}_{2,i_2}\dots \hat{\varphi}_{d,i_d}
\end{equation*}
where $\hat{\varphi}_{j,i}$ denotes the $i$th function in the $j$th direction and $\bm{i}=(i_1,i_2,\dots,i_d)$ is a multi-index. For convenience, multi-indices are often identified with ``linear'' indices in the global numbering and with a slight abuse of notation we write $\hat{\varphi}_i=\hat{\varphi}_{\bm{i}}$. Naturally, separate directions may have different polynomial degrees but for simplicity we will not consider such cases here. Finally, the basis functions are mapped to the physical elements and coupled together just as they are in 1D.

The B-spline basis construction for spline spaces has both similarities and important differences. Similarly to the Lagrange basis functions, the B-spline basis functions follow a standardized construction in a so-called parametric domain $\hat{\Omega}=[0,1]^d$ before being defined in the physical domain $\Omega$. In dimension $d=1$, the B-spline basis $\{\hat{B}_i\}_{i=1}^n$ is constructed recursively from a \emph{knot vector} $\Xi:=(\xi_1,\dots,\xi_{n+p+1})$ forming a sequence of non-decreasing real numbers. The integers $p$ and $n$ denote the spline degree and spline space dimension, respectively. A knot vector is called \emph{open} if
\begin{equation*}
    \xi_1=\dots=\xi_{p+1} < \xi_{p+2} \leq \dots \leq \xi_n < \xi_{n+1} = \dots = \xi_{n+p+1}.
\end{equation*}
Internal knots of multiplicity $1 \leq m \leq p$ lead to $C^{p-m}$ continuous spline spaces. Greater smoothness has many beneficial consequences, including better approximation properties \cite{bazilevs2006isogeometric,bressan2019approximation,sande2020explicit}. In dimension $d \geq 2$, the spline space is again defined as a tensor product of univariate spaces, which all follow a similar construction. In the isogeometric paradigm, the geometry is described by a spline map $F \colon \hat{\Omega} \to \Omega$ from the parametric domain to the physical domain. Geometries described by such a map are called \emph{single-patch} and the basis functions over the physical domain are defined as $B_i = \hat{B}_i \circ F^{-1}$. For complex geometries, dividing the physical domain into $N$ subdomains (or patches) is often inevitable such that
\begin{equation*}
    \Omega = \bigcup_{e=1}^{N} \Omega_e.
\end{equation*}
Each subdomain (or patch) $\Omega_e$ is described by its own map $F_e \colon \hat{\Omega} \to \Omega_e$ and a \emph{multi-patch} geometry is just a collection of patches. The construction of spline spaces over multi-patch geometries is similar to the construction of standard finite element spaces over multiple elements. In other words, the parametric domain in IGA plays the role of a reference element in FEM. However, the spline map is rarely affine and multi-patch IGA is closer to isoparametric FEM. Apart from that, basis functions from different patches in IGA are coupled together just as they are for different elements in FEM. Multi-patch geometries are typically only $C^0$ and ensuring greater inter-patch smoothness proves at least as difficult as ensuring greater inter-element smoothness for classical finite elements; see e.g. \cite{collin2016analysis,dornisch2017dual,kapl2019isogeometric,dornisch2021isogeometric,kapl2021family} for various $C^1$ multi-patch constructions in IGA. Be aware that in the IGA literature, elements are typically defined as knot spans $[\xi_{i}, \, \xi_{i+1}]$ with $\xi_{i} \neq \xi_{i+1}$, instead of patches. However, the aforementioned analogies suggest that patches can equally be viewed as an immediate counterpart of elements. This point of view will have important implications later in the article.

Now that we have explained some of the key differences between FEM and IGA, we turn to the solution of the semi-discrete problem \eqref{eq: semi_discrete_pb}. Without knowing the origin of the system, the inventory of all the methods available for solving it would be a long one.
Fortunately, for applications in structural dynamics, the choice narrows significantly, not only because of the properties of the methods but also for efficiency reasons. In particular, for fast transient (nonlinear) processes such as car-crash simulations \cite{leidinger2019explicit} and metal stamping \cite{hartmann2015mass}, the community has overwhelmingly adopted explicit integrators. Although it means giving up on unconditional stability, wave propagation problems already require relatively small step sizes. Moreover, explicit time integration permits colossal savings, both in terms of memory and floating point operations. The main reason is that explicit methods applied to undamped systems \emph{only} require solving linear systems with the mass matrix (also for nonlinear PDEs) and the latter is commonly substituted with an ad hoc diagonal approximation, a device widely known as \emph{mass lumping}. In addition to avoiding costly matrix factorizations or iterative solution procedures, it also often increases the critical time step \cite{voet2023mathematical}.

However, lumping the mass matrix generally comes with a loss of accuracy, the extent of which depends on the method. In classical FEA, exceedingly good (near-optimal) mass lumping techniques are known. The most successful instance is undoubtedly the nodal quadrature method within SEM, which sometimes also connects to more algebraic techniques such as the row-sum \cite{duczek2019mass}. Unfortunately, some of those techniques do not have an immediate counterpart for IGA and even if they do, applying them often causes a staggering loss of accuracy. This is even more surprising given the edge IGA initially took over classical FEM in structural vibrations \cite{cottrell2006isogeometric,cottrell2007studies,hughes2008duality}. Although some engineering applications might not require stringent accuracy, others such as structural acoustics are more sensitive to it \cite{radtke2024analysis}. Despite intensive research over the last couple of decades, a solution combining the simplicity and efficiency of SEM is still desperately sought. We believe that the shortcomings of the row-sum in IGA are not specific to the B-spline basis per se but to nonnegative bases more generally. In the next section, we review classical mass lumping techniques in our quest of finding desirable basis properties.

\section{Review of mass lumping}
\label{se: mass_lumping}
Before defining mass lumping techniques for IGA, we must understand why they thrive for classical FEM. Thus, we consider in this section a standard finite element discretization of $\Omega$ with $C^0$ finite element spaces and interpolatory Lagrange basis functions, as explained in \Cref{se: problem_statement}. Let $N$ denote the number of elements. For simplicity, we assume that the mesh consists of a single type of element with $m$ nodes. This assumption is merely for notational convenience and relaxing it does not cause any difficulties. Classical FEM follows an elementwise assembly procedure by first computing local element matrices and later assembling them into global matrices \cite{hughes2012finite,voet2023fast}. Mass lumping is often defined locally by altering the element mass matrices. Three popular mass lumping techniques are reviewed in this section in an attempt to identify desirable properties for spline functions to later mimic. Hereafter, we use the Loewner order on symmetric matrices and write $A \succeq B$ (resp. $A \succ B$) to indicate that $A-B$ is positive semidefinite (resp. positive definite).

\subsection{Row-sum technique}
\label{se: row_sum}
The row-sum technique is undoubtedly the simplest to implement. Given a matrix $M \in \mathbb{R}^{n \times n}$, the lumping operator $\rsc \colon \mathbb{R}^{n \times n} \to \mathbb{R}^{n \times n}$ is defined algebraically as
\begin{equation*}
\rsc(M)=\diag(d_1,\dots,d_n)
\end{equation*}
where $d_i=\sum_{j=1}^n m_{ij}$ for $i=1,\dots,n$.

One may easily show that lumping the global mass matrix is equivalent to lumping all element mass matrices before assembling them into a global (diagonal) matrix. Moreover, from the definition of the consistent mass and the partition on unity property of the basis, we immediately deduce that
\begin{equation*}
    d_i = \int_{\Omega} \rho(\bm{x})\varphi_i(\bm{x}) \,d\bm{x},
\end{equation*}
showing that the row-sum lumped mass is clearly basis-dependent.

\begin{remark}
In \cite{voet2023mathematical}, the lumping operator $\rs$ was defined as the \emph{absolute} row-sum. This definition ensures that $\rs(M)$ remains positive definite for a consistent mass $M$ and $\rs(M) \succeq M$, thereby guaranteeing an improvement of the CFL condition within explicit time integration schemes \cite[Corollary 3.10]{voet2023mathematical}. In contrast, none of those properties are guaranteed for the standard row-sum $\rsc(M)$ and stability must then be studied on a case-by-case basis. However, when it comes to accuracy, this definition might yield a smaller consistency error and is a natural choice for high order techniques. Obviously, for nonnegative matrices, the two definitions coincide.
\end{remark}

Owing to its simplicity, the row-sum technique is a popular choice, unless $\rsc(M)$ is indefinite. This shortcoming was the main reason for introducing the diagonal scaling method, which we describe next.

\subsection{Diagonal scaling}
\label{se: diagonal_scaling}
The diagonal scaling method \cite{hinton1976note}, also known as HRZ (from its creators Hinton, Rock and Zienkiewicz) and referred to as the ``special lumping technique'' in \cite{hughes2012finite}, is an ad hoc technique guaranteeing positive definite lumped mass matrices. As with many mass lumping techniques for classical FEM, it follows an elementwise construction, where, as the name suggests, the element lumped mass matrix is simply a rescaling of the diagonal of the element consistent mass. Denoting $D_e = \diag(M_e)$ the diagonal matrix formed from the diagonal of $M_e$, the element lumped mass matrix is defined as $\overline{M}_e = \beta_e D_e$ where
\begin{equation*}
    \beta_e = \frac{\int_{\Omega_e} \rho(\bm{x}) \,d\bm{x}}{\trace(D_e)} > 0.
\end{equation*}
By construction, $\overline{M}_e$ is positive definite and since $\trace(\overline{M}_e)=\int_{\Omega_e} \rho(\bm{x}) \,d\bm{x}$, it also ``preserves the mass''. %Unfortunately, those are about the only nice properties it has. 
Many authors have concluded that the diagonal scaling method was well-suited for low-order finite elements but quickly became quite inaccurate for higher orders \cite{malkus1988reversed,duczek2019mass,duczek2019critical}. Although extending it to IGA is rather straightforward, its poor performance for classical high order FEM presages the same fate for IGA. We must therefore turn to the last and most promising technique.

\subsection{Nodal quadrature}
\label{se: nodal_quadrature}
The nodal quadrature method is often described (and rightly so) as a form of consistent mass lumping. The method simply consists in choosing as element nodes the quadrature points of an accurate quadrature rule. Integrating the mass matrix with that same quadrature rule then naturally leads to a diagonal matrix. Since inter-element compatibility constraints and the imposition of boundary conditions require that nodes be placed on the element boundaries, the Gauss-Lobatto rule is the optimal choice. Denoting $\{\hat{\bm{x}}_k, \hat{w}_k\}_{k=1}^m$ the pairs of quadrature nodes/weights for the Gauss-Lobatto rule on the reference tensor product element $\hat{\Omega}=[-1,1]^d$, the bilinear forms $b_e,\widehat{b}_e \colon \mathbb{P}_d \times \mathbb{P}_d \to \mathbb{R}$ corresponding to the element consistent and lumped mass matrices, respectively, are defined as
\begin{equation*}
    b_e(u,v) = \int_{\Omega_e} \rho(\bm{x}) u(\bm{x})v(\bm{x}) \,d\bm{x} \quad \text{and} \quad \widehat{b}_e(u,v) = \sum_{k=1}^m w_k u(\bm{x}_k)v(\bm{x}_k),
\end{equation*}
where $\bm{x}_k = F_e(\hat{\bm{x}}_k)$, $w_k = \hat{w}_k \rho(F_e(\hat{\bm{x}}_k)) |\det(J_e(\hat{\bm{x}}_k))|$, $F_e \colon \hat{\Omega} \to \Omega_e$ is the mapping from the reference to the physical element and $J_e$ denotes its Jacobian matrix. Denoting $\Phi_e = \{\varphi_1,\dots,\varphi_m\}$ the Lagrange basis functions interpolating at the quadrature nodes $\bm{x}_k$, we immediately deduce that
\begin{equation*}
    (M_e)_{ij} = b_e(\varphi_i,\varphi_j) = \int_{\Omega_e} \rho(\bm{x}) \varphi_i(\bm{x})\varphi_j(\bm{x}) \,d\bm{x} \quad \text{and} \quad (\widehat{M}_e)_{ij} = \widehat{b}_e(\varphi_i,\varphi_j) = w_i \delta_{ij},
\end{equation*}
where $\delta_{ij}$ is the Kronecker delta defined as $\delta_{ij}=1$ if $i=j$ and zero otherwise. As shown in \cite{duczek2019mass}, if the row-sum lumped mass matrix $\rsc(M_e)$ is integrated with the same quadrature rule, $\rsc(M_e)=\widehat{M}_e$ and the two lumping strategies coincide. Assembling the element matrices $\{\widehat{M}_e\}_{e=1}^{N}$ in the usual manner then leads to a diagonal matrix $\widehat{M}$ by construction.

The nodal quadrature method is undoubtedly one of the most successful mass lumping strategies, often yielding (near-)optimal accuracy \cite{fried1975finite,cohen1994higher} and supported by rigorous convergence proofs using the Strang lemma \cite{ciarlet2002finite}. Another outstanding property, widely reported in the literature, is that the nodal quadrature method also increases the critical time step \cite{duczek2019mass,radtke2024analysis}. However, since $M_e$ features negative entries, this property does not immediately follow from results like those in \cite{voet2023mathematical} and despite being well-known to the engineering community, we could not find a general proof in existing literature. The next lemma provides the coveted result under some assumptions. Its non-trivial proof is deferred to \Cref{se: proof_CFL_SEM}.

\begin{lemma}
\label{lem: spectral_fem}
For elementwise constant density and affine tensor product spectral elements, $\widehat{M} \succeq M$ and
\begin{equation*}
    \lambda_k(K,\widehat{M}) \leq \lambda_k(K,M).
\end{equation*}
\end{lemma}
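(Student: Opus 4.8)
The plan is to reduce everything to the Loewner inequality $\widehat{M}\succeq M$, from which the eigenvalue bound is immediate: by the Courant--Fischer min--max principle $\lambda_k(K,M)=\min_{\dim S=k}\ \max_{0\neq\bm{y}\in S}\tfrac{\bm{y}^\top K\bm{y}}{\bm{y}^\top M\bm{y}}$ (eigenvalues ordered increasingly), and since $K\succeq 0$ and $0\prec M\preceq\widehat{M}$ we have $\tfrac{\bm{y}^\top K\bm{y}}{\bm{y}^\top\widehat{M}\bm{y}}\le\tfrac{\bm{y}^\top K\bm{y}}{\bm{y}^\top M\bm{y}}$ for every nonzero $\bm{y}$, whence $\lambda_k(K,\widehat{M})\le\lambda_k(K,M)$ for all $k$. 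So the whole difficulty is to establish $\widehat{M}\succeq M$.

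First I would localize. Both matrices are assembled elementwise, $M=\sum_e P_e^\top M_e P_e$ and $\widehat{M}=\sum_e P_e^\top\widehat{M}_e P_e$ with boolean connectivity matrices $P_e$, so $\widehat{M}-M=\sum_e P_e^\top(\widehat{M}_e-M_e)P_e$ and it suffices to prove $\widehat{M}_e\succeq M_e$ for each element. Since $\rho\equiv\rho_e$ on $\Omega_e$ and $F_e$ is affine --- so $|\det J_e|$ is constant --- both element matrices equal the positive scalar $\rho_e|\det J_e|$ times a fixed reference-element matrix: $M_e=\rho_e|\det J_e|\,\mathcal{M}$ with $\mathcal{M}_{ij}=\int_{\hat{\Omega}}\hat{\varphi}_i\hat{\varphi}_j$, and $\widehat{M}_e=\rho_e|\det J_e|\,\widehat{\mathcal{M}}$ with $\widehat{\mathcal{M}}=\diag(\hat{w}_1,\dots,\hat{w}_m)$, the Gauss--Lobatto (GL) lumped mass on $\hat{\Omega}=[-1,1]^d$. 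Exploiting the tensor-product structure (the $d$-dimensional GL rule is a tensor product of one-dimensional ones and the nodal basis factorizes), $\mathcal{M}=A^{\otimes d}$ and $\widehat{\mathcal{M}}=D^{\otimes d}$, where $A$ is the one-dimensional reference consistent mass matrix and $D=\diag(\hat{w}_0,\dots,\hat{w}_p)\succ 0$ collects the one-dimensional GL weights. Combining the telescoping identity $D^{\otimes d}-A^{\otimes d}=\sum_{j=1}^{d}A^{\otimes(j-1)}\otimes(D-A)\otimes D^{\otimes(d-j)}$ with the fact that a Kronecker product of positive semidefinite matrices is positive semidefinite, $\widehat{\mathcal{M}}\succeq\mathcal{M}$ reduces to the single one-dimensional inequality $D\succeq A$.

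The crux is this one-dimensional inequality. Given $\bm{c}=(c_0,\dots,c_p)^\top$, let $v\in\mathbb{P}_p$ be the polynomial with $v(\hat{x}_i)=c_i$; then $\bm{c}^\top D\bm{c}-\bm{c}^\top A\bm{c}=\sum_{i}\hat{w}_i\,v(\hat{x}_i)^2-\int_{-1}^{1}v(x)^2\,dx$, which must be nonnegative for all $v\in\mathbb{P}_p$. Since $v^2\in\mathbb{P}_{2p}$ while the $(p+1)$-point GL rule is exact only on $\mathbb{P}_{2p-1}$, this amounts to pinning down the sign of the quadrature error on one top-degree term. I would use that the GL weights are the interpolatory weights $\hat{w}_i=\int_{-1}^{1}\hat{\varphi}_i$, so that $\sum_i\hat{w}_i v(\hat{x}_i)^2=\int_{-1}^{1}\mathcal{I}(v^2)$ with $\mathcal{I}(v^2)\in\mathbb{P}_p$ the interpolant of $v^2$ at the GL nodes; hence the quantity of interest equals $-\int_{-1}^{1}\big(v^2-\mathcal{I}(v^2)\big)$. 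Writing $v^2-\mathcal{I}(v^2)=\ell\,h$, where $\ell(x)=\prod_i(x-\hat{x}_i)\propto(x^2-1)L_p'(x)$ is the nodal polynomial and $h\in\mathbb{P}_{p-1}$ has $x^{p-1}$-coefficient $a_p^2$ (with $a_p$ the leading coefficient of $v$), and using that $\ell$ is $L^2$-orthogonal to $\mathbb{P}_{p-2}$ --- immediate since $\ell$ vanishes at the GL nodes and the rule is exact on $\mathbb{P}_{2p-1}$ --- one gets $\int_{-1}^{1}\big(v^2-\mathcal{I}(v^2)\big)=a_p^2\int_{-1}^{1}\ell(x)x^{p-1}\,dx$. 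An integration by parts against the Legendre polynomial $L_p$ shows this last integral is negative, so $\sum_i\hat{w}_i v(\hat{x}_i)^2\ge\int_{-1}^{1}v^2$, i.e. $D\succeq A$, and the lemma follows. (Equivalently, one may simply invoke the classical GL error identity $\int_{-1}^{1}f-\sum_i\hat{w}_i f(\hat{x}_i)=\gamma_p\,f^{(2p)}(\xi)$, with $\gamma_p<0$ and $\xi\in(-1,1)$, applied to $f=v^2$, whose $(2p)$-th derivative is the nonnegative constant $(2p)!\,a_p^2$.) Fixing the sign of the GL error on perfect squares, equivalently the sign of $\gamma_p$, is the only point where the specific geometry of the GL nodes genuinely enters, and is the main obstacle; everything else is bookkeeping.
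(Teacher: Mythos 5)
Your proof is correct, and while it shares the overall skeleton of the paper's argument (localize to elements, pull out the constant factor $\rho_e\lvert\det J_e\rvert$, reduce to one dimension, tensorize), the key one-dimensional step is handled by a genuinely different route. The paper follows Teukolsky's observation and expands the Lagrange basis in the Legendre basis to show that $M_e=\widehat{M}_e-\alpha\,\bm{v}\bm{v}^T$ is an explicit \emph{rank-one} downdate, with $\alpha=(\gamma_q-g_q)/\gamma_q^2>0$ coming from the comparison $g_q=2/(2q+1)<2/q=\gamma_q$ of the continuous and discrete norms of the top Legendre polynomial; you instead prove only positive semidefiniteness of $D-A$, by identifying $\bm{c}^\top(D-A)\bm{c}$ with the Gauss--Lobatto quadrature error on $v^2$ and fixing its sign via the interpolation-error representation $v^2-\mathcal{I}(v^2)=\ell h$ and the orthogonality of the nodal polynomial $\ell$ to $\mathbb{P}_{p-2}$ (equivalently, the negativity of the classical Gauss--Lobatto error constant). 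The two arguments are two faces of the same fact --- in the Legendre expansion $v=\sum_k z_kp_k$ your error functional is exactly $z_q^2(\gamma_q-g_q)$ --- but yours is more classical and loses the explicit rank-one structure, which the paper exploits in the multivariate step by citing eigenvalue results for Kronecker products of rank-one-perturbed matrices from \cite{voet2023mathematical}. Your telescoping identity $D^{\otimes d}-A^{\otimes d}=\sum_{j}A^{\otimes(j-1)}\otimes(D-A)\otimes D^{\otimes(d-j)}$ handles that step in a more elementary and self-contained way, and your Courant--Fischer argument for $\lambda_k(K,\widehat{M})\le\lambda_k(K,M)$ is a direct proof of what the paper delegates to \cite[Corollary~3.6]{voet2023mathematical}. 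Both proofs break down at the same place for non-constant density or non-affine maps: you need the weight to factor out so that the sign of the error constant (respectively of $\alpha$) is known.
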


Unfortunately, the nodal quadrature method remains indeed mostly limited to tensor product spectral elements, for which positive quadrature weights are guaranteed. Indeed, for SEM, the row-sum is equivalent to approximating the consistent mass with the Gauss-Lobatto rule while for uniformly spaced interpolation nodes, it becomes equivalent to the Newton-Cotes rule, yielding negative weights for high degrees. Negative weights have disastrous consequences and, as we will see, the same issue resurfaces for spline functions. Yet, the method still possesses many desirable properties. In the next section, we will try to extend those properties to spline spaces.

\section{High order mass lumping}
\label{se: high_order_mass_lumping}
\subsection{Interpolatory spline bases}
\label{se: interpolatory_bases}
Let $\mathbb{S}$ be an $n$-dimensional spline space with associated B-spline basis $\dutchcal{B}=\{B_1,\dots,B_n\}$. Similarly to Lagrange polynomials, we would like to construct an interpolatory spline basis $\dutchcal{L}=\{L_1,\dots,L_n\}$ for $\mathbb{S}$ from a set $\{\bm{x}_i\}_{i=1}^n \subset \mathbb{R}^d$ of distinct interpolation points, as in \cite{Schoenberg1972,schoenberg1973cardinal}. Similarly to classical finite elements, those points are defined as $\bm{x}_i = F(\hat{\bm{x}}_i)$, where $\{\hat{\bm{x}}_i\}_{i=1}^n \subset \hat{\Omega} = [0, 1]^d$ are the interpolation points in the parametric domain $\hat{\Omega}$ and $F \colon \hat{\Omega} \to \Omega$ is the spline map from the parametric to the physical domain, described as a single patch. Since $\mathbb{S}=\Span(\dutchcal{B})=\Span(\dutchcal{L})$, we seek coefficients $c_{kj}$ such that
\begin{equation}
\label{eq: B_to_L}
    L_j(\bm{x})=\sum_{k=1}^n c_{kj}B_k(\bm{x}) \qquad j=1,\dots,n.
\end{equation}
The interpolation conditions $L_j(\bm{x}_i)=\delta_{ij}$ for $i,j=1,\dots,n$ lead to the matrix equation $AC=I$, where
\begin{equation}
\label{eq: def_matrices}
    A=
    \begin{pmatrix}
        B_1(\bm{x}_1) & \hdots & B_n(\bm{x}_1) \\
        \vdots & \ddots & \vdots \\
        B_1(\bm{x}_n) & \hdots & B_n(\bm{x}_n)
    \end{pmatrix},
    \quad C=
    \begin{pmatrix}
        c_{11} & \hdots & c_{1n} \\
        \vdots & \ddots & \vdots \\
        c_{n1} & \hdots & c_{nn}
    \end{pmatrix}.
\end{equation}
The coefficient matrix $C$ is uniquely defined provided the collocation matrix $A$ is invertible, which is the case if the distinct interpolation points $\{\bm{x}_i\}_{i=1}^n$ satisfy a generalized form of the Schoenberg-Whitney theorem.

\begin{theorem}[Schoenberg-Whitney theorem]
\label{th: schoenberg_whitney}
For distinct interpolation points $\{\bm{x}_i\}_{i=1}^n$, the collocation matrix $A$ in \eqref{eq: def_matrices} is invertible if and only if
\begin{equation*}
    B_i(\bm{x}_i)>0 \qquad i=1,\dots,n.
\end{equation*}
\end{theorem}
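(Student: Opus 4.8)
The theorem asserts an if-and-only-if characterization of invertibility of the collocation matrix $A$ in terms of the positivity of the diagonal entries $B_i(\bm{x}_i)$. The forward direction (invertibility $\Rightarrow$ $B_i(\bm{x}_i)>0$) is the easy one and I would dispatch it first by contraposition: if $B_i(\bm{x}_i)=0$ for some $i$, I want to exhibit a nonzero vector in the kernel (or a zero row/column structure forcing $\det A = 0$). The natural tool here is the \emph{staircase} structure of B-spline collocation matrices — after ordering the points so that $\hat{x}_1 \le \hat{x}_2 \le \dots \le \hat{x}_n$ (using the spline map $F$ to pull back to the parametric domain, where B-splines have local support on knot intervals), the local support property $B_i(x) \ne 0 \iff x \in (\xi_i, \xi_{i+p+1})$ means $A$ is banded. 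If $B_i(\bm{x}_i) = 0$, then by monotonicity of the support windows, either the first $i$ columns are supported only on the first $i-1$ rows (giving a rank deficiency) or symmetrically the last $n-i+1$ columns vanish on the last rows; either way $\det A = 0$. This is the classical argument and should go through cleanly.

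**The hard direction.** The converse (positivity of diagonal $\Rightarrow$ invertibility) is the substance of the Schoenberg–Whitney theorem. The classical approach, which I would follow, is to show $\det A > 0$ (or at least $\ne 0$) via a sign/total-positivity argument. B-spline collocation matrices are known to be \emph{totally nonnegative} (all minors $\ge 0$) — this follows from the variation-diminishing property of B-splines, or can be taken as a cited fact from Schumaker or de Boor. Given total nonnegativity, a standard lemma says that such a matrix with a staircase support pattern is nonsingular if and only if all its diagonal entries are nonzero; the determinant then expands as a sum of nonnegative terms, the diagonal term $\prod_i B_i(\bm{x}_i)$ being strictly positive under the hypothesis, and one must check this term is not cancelled (it can't be, since all terms are $\ge 0$). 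The subtlety is establishing that the diagonal product actually appears with a positive coefficient and that no degeneracy collapses the sum — here the precise interlacing $\xi_i < \bm{x}_i < \xi_{i+p+1}$ (equivalently $B_i(\bm{x}_i) > 0$) is exactly what pins down the banded structure so the permutation expansion behaves.

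**The multivariate wrinkle.** Since the excerpt explicitly says ``a generalized form of the Schoenberg–Whitney theorem'' and works in $\hat{\Omega} = [0,1]^d$, I expect the genuine obstacle to be the tensor-product / multivariate case, where there is no single linear ordering of the points and the clean banded structure is lost. The standard route is to reduce to the univariate case by a tensor-product argument: if the interpolation points form a \emph{grid} $\hat{\bm{x}}_{\bm{i}} = (\hat{x}_{1,i_1}, \dots, \hat{x}_{d,i_d})$ that is the Cartesian product of univariate point sets each satisfying the 1D Schoenberg–Whitney condition, then $A$ is a Kronecker product $A = A_1 \otimes \dots \otimes A_d$ and invertibility follows from $\det A = \prod_k (\det A_k)^{n/n_k} \ne 0$. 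The difficulty — and I suspect the paper either restricts to gridded points or cites a reference — is that for \emph{scattered} (non-gridded) points in $d \ge 2$, the multivariate Schoenberg–Whitney theorem is genuinely delicate and the naive condition $B_{\bm{i}}(\bm{x}_{\bm{i}}) > 0$ is necessary but \emph{not} sufficient in general; a correct statement needs the Kronecker/grid hypothesis or an analogue of the staircase pattern. So in my proof I would: (1) prove the 1D statement by the total-positivity argument above; (2) observe $B_{\bm{i}} = \prod_k \hat{B}_{k,i_k} \circ F^{-1}$ and, for gridded points, factor $A$ as a Kronecker product; (3) conclude. I would flag explicitly that the "generalized" form being invoked is the tensor-product one, and cite Schumaker (\cite{Schumaker2007}) for the univariate total-positivity input rather than reproving it.
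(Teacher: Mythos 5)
Your proposal is correct and follows essentially the same route as the paper: the multivariate case is reduced to the univariate Schoenberg--Whitney theorem via the Kronecker factorization $A=\bigotimes_{k=1}^d A_k$ of the collocation matrix for tensor-product interpolation points, with the univariate result cited rather than reproved (the paper cites Lyche--M{\o}rken, you cite Schumaker). Your observation that the ``generalized'' statement is really the gridded/tensor-product one---and that the bare diagonal condition would not suffice for scattered points in $d\ge 2$---correctly identifies the implicit hypothesis on which the paper's proof relies.
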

\begin{proof}
The result for $d=1$ is the classical statement of the Schoenberg-Whitney theorem and is well-known (see e.g. \citep[Theorem 10.6]{lyche2018spline}). The statement for arbitrary dimension $d$ is less common but is a natural extension to multivariate spline interpolation (see e.g. \cite{floater2023introduction} for a slightly different but equivalent form). Using the identification of linear and multi-indices,
\begin{equation*}
    A_{ij}=B_j(\bm{x}_i)=B_j(F(\hat{\bm{x}}_i))=\hat{B}_j(\hat{\bm{x}}_i) = \hat{B}_{\bm{j}}(\hat{\bm{x}}_{\bm{i}})= \prod_{k=1}^d \hat{B}_{k,j_k}(\hat{x}_{i_k}) = \prod_{k=1}^d (A_k)_{i_k j_k} = (\bigotimes_{k=1}^d A_k)_{ij}
\end{equation*}
where $(A_k)_{ij}=\hat{B}_{k,j}(\hat{x}_{i})$. Hence, $A=\bigotimes_{k=1}^d A_k$ is a Kronecker product. Thus, $A$ is invertible if and only if all factor matrices $A_k$ are invertible, which, from the Schoenberg-Whitney theorem for $d=1$, is equivalent to
\begin{equation*}
    B_i(\bm{x}_i)=\prod_{k=1}^d \hat{B}_{k,i_k}(\hat{x}_{i_k})>0 \qquad i=1,\dots,n.
\end{equation*}
\end{proof}

\begin{remark}
For a multi-dimensional problem, solving linear systems with the collocation matrix (or its transpose) or computing its inverse naturally leverages its Kronecker structure and merely requires performing the same operations on the factor matrices \cite{golub2013matrix}. Those operations are even cheaper given the banded nature of the 1D collocation matrices \cite{lyche2018spline} and the absence of pivoting \cite{morken1996total}. Thus, solving linear systems with $A$ or computing $C=A^{-1}$ is perfectly affordable.
\end{remark}

\Cref{th: schoenberg_whitney} simply states that the diagonal of $A$ must be positive, or, equivalently, $\bm{x}_i \in \supp(B_i)$ for $i=1,\dots,n$. In analogy to classical interpolation, a set of distinct interpolation points that satisfies this condition is called \emph{unisolvent}. Moreover, for imposing boundary conditions and coupling patches, interpolation points are also placed at the boundaries of patches, just as they were for finite elements. In the sequel, we will always assume those conditions are satisfied without necessarily specifying any points. Concrete examples will come later. Clearly, those conditions are not very restrictive and still leave considerable freedom for choosing the interpolation points. Since our goal is to approximate the mass matrix, we want to choose them as quadrature points. This theory parallels the one developed for SEM, where the points $\{\bm{x}_i\}_{i=1}^n$ both serve as interpolation and quadrature points (see \Cref{se: nodal_quadrature}). The concepts straightforwardly extend to spline functions. Given $n$ interpolation (or quadrature) points $\{\bm{x}_i\}_{i=1}^n$, the corresponding weights are found by requiring that the quadrature rule exactly integrates all (weighted) splines in $\mathbb{S}$:
\begin{equation}
\label{eq: integration}
\int_\Omega \rho(\bm{x}) s(\bm{x}) \,d\bm{x} = \sum_{k=1}^n w_k s(\bm{x}_k) \qquad \forall s \in \mathbb{S}.
\end{equation}
Imposing \eqref{eq: integration} on all basis functions in $\dutchcal{B}$ leads to the so-called moment-fitting equations
\begin{equation}
\label{eq: moment_fitting}
    A^T \bm{w} = \bm{b}
\end{equation}
where
\begin{equation*}
    A^T =
    \begin{pmatrix}
        B_1(\bm{x}_1) & \hdots & B_1(\bm{x}_n) \\
        \vdots & \ddots & \vdots \\
        B_n(\bm{x}_1) & \hdots & B_n(\bm{x}_n)
    \end{pmatrix},
    \quad
    \bm{w} =
    \begin{pmatrix}
        w_1 \\
        \vdots \\
        w_n
    \end{pmatrix}
    \quad \text{and} \quad
    \bm{b} =
    \begin{pmatrix}
        \int_\Omega \rho(\bm{x}) B_1(\bm{x}) \,d\bm{x} \\
        \vdots \\
        \int_\Omega \rho(\bm{x}) B_n(\bm{x}) \,d\bm{x}
    \end{pmatrix}.
\end{equation*}
Since the collocation matrix becomes the identity for the Lagrange spline basis $\dutchcal{L}$, \eqref{eq: integration} also leads to 
\begin{equation}
\label{eq: quadrature_weights}
    w_i = \int_\Omega \rho(\bm{x}) L_i(\bm{x}) \,d\bm{x} \qquad i=1,\dots,n.
\end{equation}
Once the quadrature weights have been computed, the resulting quadrature rule allows approximating integrals. Given a continuous function $f \in C^0(\Omega)$, the integral and quadrature operators are defined as
\begin{equation}
\label{eq: operators}
I(f) = \int_\Omega \rho(\bm{x}) f(\bm{x}) \,d\bm{x} \quad \text{and} \quad Q(f) = \sum_{k=1}^n w_k f(\bm{x}_k),
\end{equation}
respectively, where $\{\bm{x}_i\}_{i=1}^n$ are the quadrature nodes and $\{w_i\}_{i=1}^n$ are the quadrature weights obtained through \eqref{eq: moment_fitting}. Although inexact, the quadrature rule introduced in \eqref{eq: operators} may also approximate the bilinear form $b$ in \eqref{eq: bilinear_forms} and we define $\widehat{b} \colon \mathbb{S} \times \mathbb{S} \to \mathbb{R}$ such that
\begin{equation*}
    \widehat{b}(u,v) = \sum_{k=1}^n w_k u(\bm{x}_k)v(\bm{x}_k).
\end{equation*}
The bilinear form $\widehat{b}$ will generally differ from $b$. As a matter of fact, for $\widehat{b}$ to exactly reproduce $b$, the quadrature rule would have to exactly integrate the (weighted) square of splines; i.e. $Q(s^2)=I(s^2)$ for all $s \in \mathbb{S}$. Unfortunately, it only integrates the (weighted) splines themselves; i.e. $Q(s)=I(s)$ for all $s \in \mathbb{S}$. In the sequel, we set $\rho(\bm{x})=1$ to simplify the expressions but all the results carry over to the weighted case with very minor adjustments. We next define the mass matrices for the exact and approximate bilinear forms $b$ and $\widehat{b}$.

\begin{definition}[Mass matrices]
\label{def: mass_matrices}
Let $\Phi=\{\varphi_1, \dots, \varphi_n\}$ be an arbitrary spline basis for the $n$-dimensional spline space $\mathbb{S}$. Then, we denote
\begin{equation*}
    (M_\Phi)_{ij} = b(\varphi_i,\varphi_j) \quad \text{and} \quad (\widehat{M}_\Phi)_{ij} = \widehat{b}(\varphi_i,\varphi_j)
\end{equation*}
the consistent and approximate mass matrices with respect to the basis $\Phi$.
\end{definition}

\begin{definition}[Lumped mass matrix]
\label{def: lumped_mass}
For an arbitrary basis $\Phi$ of $\mathbb{S}$ we denote
\begin{equation*}
    \widetilde{M}_\Phi = \rsc(M_\Phi)
\end{equation*}
the lumped mass matrix with respect to $\Phi$.
\end{definition}
Since the mass matrices introduced in \Cref{def: mass_matrices} are induced by bilinear forms, their expression in different bases are related through congruence transformations. More specifically, if $id \colon \mathbb{S} \to \mathbb{S}$ is the identity endomorphism and $P=(id)_\Phi^\Psi$ denotes the change of basis matrix between two bases $\Phi$ and $\Psi$, then
\begin{equation}
\label{eq: basis_change}
    M_\Phi = P^T M_\Psi P \quad \text{and} \quad \widehat{M}_\Phi = P^T \widehat{M}_\Psi P.
\end{equation}

The next lemma is reminiscent of SEM.

\begin{lemma}
\label{lem: LML_QML}
For the Lagrange spline basis $\dutchcal{L}$
\begin{equation*}
    \widetilde{M}_\dutchcal{L}=\widehat{M}_\dutchcal{L}=\diag(w_1,\dots,w_n).
\end{equation*}
\end{lemma}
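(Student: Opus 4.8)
The plan is to verify the two stated equalities in turn: the first is a direct consequence of the interpolation property, while the second rests on the fact that the Lagrange spline basis $\dutchcal{L}$ inherits a partition of unity from $\dutchcal{B}$.

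First I would compute $\widehat{M}_\dutchcal{L}$ from \Cref{def: mass_matrices}. Writing $(\widehat{M}_\dutchcal{L})_{ij} = \widehat{b}(L_i,L_j) = \sum_{k=1}^n w_k L_i(\bm{x}_k) L_j(\bm{x}_k)$ and invoking the interpolation conditions $L_i(\bm{x}_k) = \delta_{ik}$, every summand vanishes unless $k = i = j$, leaving $(\widehat{M}_\dutchcal{L})_{ij} = w_i \delta_{ij}$. This is the spline counterpart of the elementwise identity recalled in \Cref{se: nodal_quadrature}, and it already gives $\widehat{M}_\dutchcal{L} = \diag(w_1,\dots,w_n)$.

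Next I would treat $\widetilde{M}_\dutchcal{L} = \rsc(M_\dutchcal{L})$. By \Cref{def: lumped_mass} this matrix is diagonal, with $i$th entry $d_i = \sum_{j=1}^n (M_\dutchcal{L})_{ij} = \sum_{j=1}^n b(L_i,L_j) = b(L_i, \sum_{j=1}^n L_j)$ by the bilinearity of $b$. The crucial step is to show $\sum_{j=1}^n L_j \equiv 1$: this function lies in $\mathbb{S}$ because $1 = \sum_k B_k \in \mathbb{S}$ by the B-spline partition of unity, and it interpolates the value $1$ at each unisolvent point $\bm{x}_i$ since $\sum_j L_j(\bm{x}_i) = \sum_j \delta_{ij} = 1$; unisolvence — equivalently, invertibility of the collocation matrix $A$ guaranteed by \Cref{th: schoenberg_whitney} — forces it to equal the constant spline $1$. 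Therefore $d_i = b(L_i,1) = \int_\Omega \rho(\bm{x}) L_i(\bm{x})\,d\bm{x} = w_i$ by \eqref{eq: quadrature_weights}, so $\widetilde{M}_\dutchcal{L} = \diag(w_1,\dots,w_n) = \widehat{M}_\dutchcal{L}$.

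The only genuine obstacle is recognizing and justifying that an interpolatory spline basis at unisolvent points reproduces constants; once that partition of unity property is in hand, both equalities follow with essentially no computation. One could alternatively obtain $\widehat{M}_\dutchcal{L} = \widetilde{M}_\dutchcal{L}$ more abstractly from a spline version of the identity $\rsc(M_\Phi) = \widehat{M}_\Phi$ for an interpolatory basis (the analogue of the result of \cite{duczek2019mass} quoted in \Cref{se: nodal_quadrature}), but the direct route above is shorter and self-contained.
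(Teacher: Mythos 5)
Your proof is correct and follows essentially the same route as the paper: both equalities reduce to the interpolation property $L_i(\bm{x}_k)=\delta_{ik}$ together with the fact that $\dutchcal{L}$ forms a partition of unity. The only cosmetic difference is how that last fact is established — you invoke uniqueness of the spline interpolant of the constant function $1$, whereas the paper argues via the eigenpair $(1,\bm{e})$ of the collocation matrix $A$ and its inverse $C$; the two arguments are equivalent, since both rest on the invertibility of $A$.
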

\begin{proof}
First note that the Lagrange spline basis forms a partition of unity; i.e., $\sum_{j=1}^n L_j(x)=1$. Indeed, since the B-spline basis is a partition of unity, $A\bm{e}=\bm{e}$, where $\bm{e}$ is the vector of all ones. Therefore, $(1,\bm{e})$ is an eigenpair of $A$ and is thus also an eigenpair of $C=A^{-1}$, meaning that $\sum_{j=1}^n c_{kj}=1$ for all $k=1,\dots,n$. Consequently,
\begin{equation*}
    \sum_{j=1}^n L_j(x) = \sum_{j=1}^n \sum_{k=1}^n c_{kj}B_k(x) = \sum_{k=1}^n B_k(x)=1.
\end{equation*}
With this result in mind, the statement of the lemma easily follows, since, on the one hand,
\begin{equation*}
    (\widetilde{M}_\dutchcal{L})_{ii} = \sum_{j=1}^n \int_{\Omega} L_i(x)L_j(x) \,dx = \int_{\Omega} L_i(x) \sum_{j=1}^n L_j(x) \,dx = \int_{\Omega} L_i(x) \,dx = w_i,
\end{equation*}
and on the other hand,
\begin{equation*}
    (\widehat{M}_\dutchcal{L})_{ij} = \sum_{k=1}^n w_k L_i(x_k)L_j(x_k) = w_i \delta_{ij}.
\end{equation*}
Consequently, $\widetilde{M}_\dutchcal{L}=\widehat{M}_\dutchcal{L}=\diag(w_1,\dots,w_n)$.
\end{proof}

Hence, in analogy to SEM, for interpolatory spline bases, lumping the mass matrix with the row-sum technique is equivalent to applying a quadrature rule built on the same interpolation points. However, for more general bases such as the B-spline basis, $\widetilde{M}_\dutchcal{B} \neq \widehat{M}_\dutchcal{B}$. The different approximations are nevertheless connected, as shown in the next corollary.

\begin{corollary}
\label{cor: LMB_QMB}
If $\mathcal{Q}$, $\rsc$ and $\mathcal{C}$ identify the quadrature, lumping and change of basis, respectively, then the following diagram commutes:
\begin{equation*}
\begin{tikzcd}
M_\dutchcal{B} \arrow{r}{\mathcal{Q}} \arrow[swap]{d}{\mathcal{C}} & \widehat{M}_\dutchcal{B} \arrow{d}{\mathcal{C}} \\%
 M_\dutchcal{L} \arrow{r}{\stackinset{c}{-0.75pt}{c}{0pt}{\scriptsize $\circ$}{$\mathcal{L}$}} & \widetilde{M}_\dutchcal{L}
\end{tikzcd}  
\end{equation*}
\end{corollary}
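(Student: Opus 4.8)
The plan is to unwind the three labelled arrows and reduce the commutativity to \Cref{lem: LML_QML}. First I would record what each arrow does to the distinguished matrix $M_\dutchcal{B}$. The quadrature arrow $\mathcal{Q}$ replaces the bilinear form $b$ by $\widehat{b}$, so it sends $M_\dutchcal{B}$ to $\widehat{M}_\dutchcal{B}$ (\Cref{def: mass_matrices}). The lumping arrow $\rsc$ sends a matrix to its row-sum diagonal, so $\rsc(M_\dutchcal{L}) = \widetilde{M}_\dutchcal{L}$ by \Cref{def: lumped_mass}. The change-of-basis arrow $\mathcal{C}$ is, by the congruence relation \eqref{eq: basis_change} applied to the pair $(\dutchcal{B},\dutchcal{L})$ related through \eqref{eq: B_to_L}, the operation $X \mapsto C^T X C$ with $C$ the coefficient matrix of \eqref{eq: def_matrices}; hence $\mathcal{C}(M_\dutchcal{B}) = M_\dutchcal{L}$ and $\mathcal{C}(\widehat{M}_\dutchcal{B}) = \widehat{M}_\dutchcal{L}$.

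Next I would trace the two paths from $M_\dutchcal{B}$ to the bottom-right corner and check they agree. Along the top edge followed by the right edge,
\[
\mathcal{C}\big(\mathcal{Q}(M_\dutchcal{B})\big) = \mathcal{C}(\widehat{M}_\dutchcal{B}) = C^T \widehat{M}_\dutchcal{B} C = \widehat{M}_\dutchcal{L},
\]
which by \Cref{lem: LML_QML} equals $\diag(w_1,\dots,w_n) = \widetilde{M}_\dutchcal{L}$; note that this identity is also exactly what makes it legitimate to label the target of the right vertical arrow $\widetilde{M}_\dutchcal{L}$ in the first place. Along the left edge followed by the bottom edge,
\[
\rsc\big(\mathcal{C}(M_\dutchcal{B})\big) = \rsc(C^T M_\dutchcal{B} C) = \rsc(M_\dutchcal{L}) = \widetilde{M}_\dutchcal{L}
\]
by \Cref{def: lumped_mass}. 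Both compositions return $\widetilde{M}_\dutchcal{L}$, so the square commutes.

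The argument is essentially bookkeeping, and the only place a slip could occur is in fixing the orientation of the congruence --- that passing from the B-spline basis to the Lagrange basis acts as $X \mapsto C^T X C$ rather than by the inverse coefficient matrix $A = C^{-1}$. Beyond that there is no real obstacle: once the labels are unwound, the corollary says nothing more than $\widehat{M}_\dutchcal{L} = \widetilde{M}_\dutchcal{L}$, i.e.\ it is \Cref{lem: LML_QML} rendered in diagrammatic form and transported to the $\dutchcal{B}$-basis by congruence. I would therefore keep the write-up short --- essentially the two displayed chains of equalities above together with the single sentence identifying $\mathcal{C}$ with congruence by $C$.
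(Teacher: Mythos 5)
Your argument is correct and coincides with the paper's own proof: both identify $\mathcal{C}$ with the congruence $X \mapsto C^T X C$ via \eqref{eq: basis_change}, trace the two paths around the square, and reduce the commutativity to the identity $\widehat{M}_\dutchcal{L}=\widetilde{M}_\dutchcal{L}$ of \Cref{lem: LML_QML}. Your remark about getting the orientation of the congruence right ($C$ rather than $A=C^{-1}$) matches the paper's explicit identification $C=(id)_\dutchcal{L}^\dutchcal{B}$.
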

\begin{proof}
From \eqref{eq: B_to_L} we deduce that $C=(id)_\dutchcal{L}^\dutchcal{B}$ and consequently \eqref{eq: basis_change} yields
\begin{equation*}
    M_\dutchcal{L} = C^T M_\dutchcal{B} C \quad \text{and} \quad \widehat{M}_\dutchcal{L} = C^T \widehat{M}_\dutchcal{B} C.
\end{equation*}
After combining those relations with \Cref{lem: LML_QML}, we obtain
\begin{equation*}
    \rsc(C^T M_\dutchcal{B} C) = \rsc(M_\dutchcal{L}) = \widetilde{M}_\dutchcal{L} = \widehat{M}_\dutchcal{L} = C^T \widehat{M}_\dutchcal{B} C,
\end{equation*}
which proves the statement of the corollary.
\end{proof}

In other words, applying the quadrature rule to the consistent mass in the B-spline basis and changing to the interpolatory basis is identical to first changing the basis and then approximating the consistent mass in the interpolatory basis with the row-sum technique. This result holds regardless of problem dimension and mesh distortion for two main reasons. Firstly, \Cref{lem: LML_QML} assumes a partition of unity basis, which also implies that the mass matrix is lumped before handling the Dirichlet boundary conditions. Secondly, the lumped mass matrix $\widetilde{M}_\dutchcal{L}$ exactly contains the moments $w_i = \int_\Omega L_i(\bm{x}) \,d\bm{x}$ along its diagonal. Previous authors have assumed those quantities were only approximated \cite{duczek2019mass}. However, the integrals can be approximated to arbitrary precision with potentially different quadrature rules and should not be a concern.

In summary, \Cref{cor: LMB_QMB} effectively connects interpolation, mass lumping and quadrature and provides a pathway towards high order mass lumping strategies. The accuracy of mass lumping schemes revolves around the spectrum of the matrix pair formed by the stiffness and mass matrices, denoted $\Lambda(K_\Phi, M_\Phi)$. Here, similarly to the mass matrix, $K_\Phi$ denotes the stiffness matrix expressed in a basis $\Phi$. Thanks to \Cref{cor: LMB_QMB}, we immediately deduce the following result.

\begin{corollary}
\label{cor: spectrum}
\begin{equation*}
    \Lambda(K_\dutchcal{B},\widehat{M}_\dutchcal{B})=\Lambda(K_\dutchcal{L}, \widetilde{M}_\dutchcal{L}).
\end{equation*}
\end{corollary}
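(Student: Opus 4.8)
The plan is to reduce the claim to the elementary fact that the spectrum of a matrix pencil is invariant under congruence by an invertible matrix, exactly as one does when transporting a generalized eigenvalue problem through a change of basis. The only extra ingredient beyond \Cref{cor: LMB_QMB} is the observation that the stiffness matrix transforms under a change of basis by the very same congruence rule as the mass matrix, since it too is induced by a bilinear form, namely $a$ in \eqref{eq: bilinear_forms}.

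Concretely, I would first record that \eqref{eq: basis_change} applies verbatim with $b$ replaced by $a$, so that for $C=(id)_\dutchcal{L}^\dutchcal{B}$ as in the proof of \Cref{cor: LMB_QMB} one has $K_\dutchcal{L}=C^T K_\dutchcal{B} C$. Combining this with the identity $\widetilde{M}_\dutchcal{L}=C^T\widehat{M}_\dutchcal{B}C$ already established there shows that the pencils $(K_\dutchcal{L},\widetilde{M}_\dutchcal{L})$ and $(K_\dutchcal{B},\widehat{M}_\dutchcal{B})$ are congruent via $C$. Then, if $(\lambda,\bm v)$ solves $K_\dutchcal{B}\bm v=\lambda\widehat{M}_\dutchcal{B}\bm v$, setting $\bm u=C^{-1}\bm v$ and left-multiplying by $C^T$ yields $K_\dutchcal{L}\bm u=\lambda\widetilde{M}_\dutchcal{L}\bm u$, and conversely; since $C$ is invertible this correspondence is a bijection on eigenvectors preserving algebraic and geometric multiplicities, hence $\Lambda(K_\dutchcal{B},\widehat{M}_\dutchcal{B})=\Lambda(K_\dutchcal{L},\widetilde{M}_\dutchcal{L})$.

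There is essentially no real obstacle here; the argument is a couple of lines. The one point deserving a word of care is the invertibility of $C$, which makes the congruence legitimate: this is guaranteed precisely by the Schoenberg--Whitney condition of \Cref{th: schoenberg_whitney}, assumed throughout. A second, minor point is that $\widehat{M}_\dutchcal{B}$ (equivalently $\widehat{M}_\dutchcal{L}=\diag(w_1,\dots,w_n)$ by \Cref{lem: LML_QML}) need not be positive definite, or even nonsingular, if some quadrature weight vanishes or is negative; but the spectrum of a pencil is well defined regardless, interpreting the eigenvalue correspondence projectively, and congruence by an invertible matrix preserves it, so the statement holds with no definiteness hypothesis. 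When all weights are positive, the pencil is genuinely symmetric-definite and its spectrum consists of the usual real nonnegative vibration frequencies.
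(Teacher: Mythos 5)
Your proof is correct and follows essentially the same route as the paper: both establish $K_\dutchcal{L}=C^TK_\dutchcal{B}C$ and $\widetilde{M}_\dutchcal{L}=C^T\widehat{M}_\dutchcal{B}C$ via \Cref{cor: LMB_QMB} and then invoke invariance of the pencil spectrum under congruence by the invertible matrix $C$ (the paper cites Stewart and Sun for this step, whereas you spell out the eigenvector correspondence explicitly). Your added remarks on the invertibility of $C$ and on the possible indefiniteness of $\widehat{M}_\dutchcal{B}$ are sound but not needed for the argument.
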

\begin{proof}
The proof is an immediate consequence of \Cref{cor: LMB_QMB} and \cite[Theorem VI.1.8]{stewart1990matrix}:
\begin{equation*}
\Lambda(K_\dutchcal{B},\widehat{M}_\dutchcal{B})=\Lambda(C^TK_\dutchcal{B}C,C^T\widehat{M}_\dutchcal{B}C)=\Lambda(K_\dutchcal{L}, \widetilde{M}_\dutchcal{L}).
\end{equation*}
\end{proof}
Thus, the accuracy of mass lumping in the interpolatory basis is tied to the accuracy of the quadrature rule. This better explains the experimental results in \cite{li2022significance,li2024interpolatory} and from now on, we will instead focus on analyzing the properties of the quadrature rule. For interpolatory quadrature rules, we assume that $n$ quadrature (or interpolation) points $\{\bm{x}_k\}_{k=1}^n$ are given and then compute the quadrature weights $\{w_k\}_{k=1}^n$ by solving the moment-fitting equation \eqref{eq: moment_fitting}. There are two classical choices of interpolation points:
\begin{itemize}
    \item The Greville abscissa \cite{gordon1974b}, also known as knot averages, are defined as
    \begin{equation*}
    \label{eq: greville_pts}
        \hat{x}_i = \frac{1}{p}\sum_{j=1}^p \xi_{i+j} \qquad i=1,\dots,n
    \end{equation*}
    where $\Xi = \{\xi_1,\dots,\xi_{n+p+1}\}$ is the knot vector for an $n$-dimensional (univariate) spline space of degree $p$. 
    \item The Demko points (also sometimes called Chebyshev-Demko points) are implicitly defined as the extrema abscissa of an equioscillating (Chebyshev) spline \cite{demko1985existence,morken1984two}. More specifically, there exists a spline $s \in \mathbb{S}$ and unique points $\{\hat{x}_i\}_{i=1}^n$ \cite{smith1988knots} such that
    \begin{align}
        s(\hat{x}_i)&=(-1)^{i+1}, \label{eq: equioscillation} \\
        \|s\|_\infty &= 1. \label{eq: max_norm}
    \end{align}
    The second condition ensures that the Chebyshev spline neither undershoots nor overshoots between the Demko points, contrary to other choices of interpolation points that might as well satisfy the first condition. The Demko points and their associated Chebyshev spline are exemplarily shown in \Cref{fig: Chebyshev_spline} for a non-uniform knot vector. In practice, the Demko points are computed iteratively, starting from the Greville abscissa and successively choosing as next iterate the extrema abscissa of the oscillating spline satisfying \eqref{eq: equioscillation} until \eqref{eq: max_norm} is finally satisfied (up to a tolerance) \cite{smith1988knots}. For Matlab users, the algorithm is part of the Curve Fitting Toolbox.

    \begin{figure}[H]
    \centering
    \includegraphics[width=0.5\linewidth]{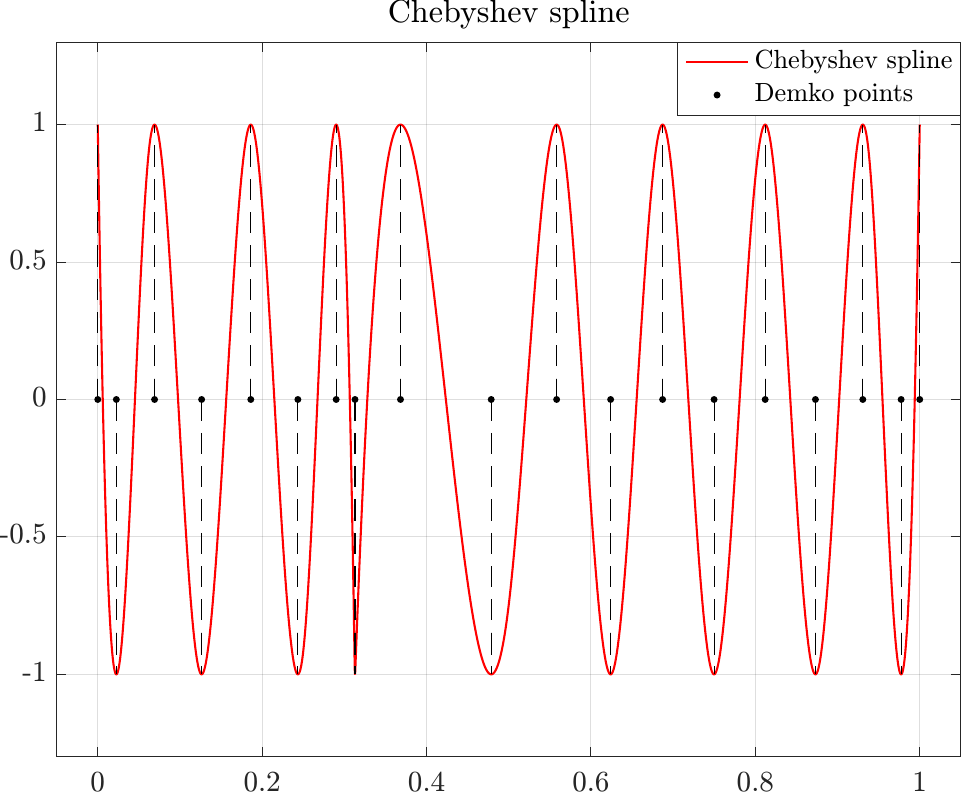}
    \caption{Cubic $C^2$ Chebyshev spline for a non-uniform knot vector}
    \label{fig: Chebyshev_spline}
    \end{figure}
\end{itemize}
In addition to the above choices there are several works on finding improved quadrature rules for spline spaces, both classical \cite{micchelli1977moment,nikolov1996certain} and in the context of IGA \cite{hughes2010efficient,fahrendorf2018reduced,calabro2019quadrature}.

As usual, interpolation points for multivariate spaces are defined as the tensor product of univariate ones, before being mapped to the physical domain $\Omega$. Finally, the quadrature weights are obtained by solving \eqref{eq: moment_fitting} and are not necessarily a tensor product of univariate weights, unless the geometry mapping and density function are separable. The Greville and Demko points are extensively used for isogeometric collocation, where they often perform equally well \cite{auricchio2010isogeometric,evans2018explicit}. The search for better collocation points has also led to using superconvergent points for the considered differential equation; see e.g.
\cite{wahlbin2006superconvergence,anitescu2015isogeometric,gomez2016variational,montardini2017optimal}. While the Greville and Demko points perform equally well for collocation, major differences appear when it comes to interpolation and quadrature. In particular, the weights associated to prescribed quadrature points are not necessarily positive. Negative weights are already a source of anxiety for quadrature and, as we will see in the next section, their consequences for mass lumping are even more dramatic.

\subsection{Negative weights}
\label{se: negative_weights}
Unfortunately, although the system matrix and right-hand side of \eqref{eq: moment_fitting} are nonnegative, the solution may have negative entries. As a matter of fact, for the Greville abscissa, negative weights already emerge for quartic $C^1$ spline spaces on uniform meshes \cite{zou2021galerkin} and abound as the spline degree increases, as shown in \Cref{tab: test_conditions}. Interestingly, the sign pattern appearing for $C^0$ B-splines is exactly the same as for the Newton-Cotes quadrature rule. This is not surprising given that the Greville abscissa for $C^0$ spline discretizations on uniform meshes are uniformly spaced. Although negative weights apparently do not appear for maximally smooth discretizations on uniform meshes, they do eventually appear on non-uniform ones, as we found out in a series of counter-examples reported in \Cref{se: non_uniform_meshes}.

\begin{table}[H]
\centering
\begin{tabular}{lllllllllllllllllllll}
$k$\textbackslash $p$ & 1 & 2 & 3 & 4 & 5 & 6 & 7 & 8 & 9 & 10 & 11 & 12 & 13 & 14 & 15 & 16 & 17 & 18 & 19 & 20 \\ 
\hline 
0 & \green & \green & \green & \green & \green & \green & \green & \red & \green & \red & \red & \red & \red & \red & \red & \red & \red & \red & \red & \red \\ 
1 & \gray & \green & \green & \red & \green & \red & \red & \red & \red & \red & \red & \red & \red & \red & \red & \red & \red & \red & \red & \red \\ 
2 & \gray & \gray & \green & \green & \green & \red & \red & \red & \red & \red & \red & \red & \red & \red & \red & \red & \red & \red & \red & \red \\ 
3 & \gray & \gray & \gray & \green & \green & \green & \green & \red & \red & \red & \red & \red & \red & \red & \red & \red & \red & \red & \red & \red \\ 
4 & \gray & \gray & \gray & \gray & \green & \green & \green & \green & \red & \red & \red & \red & \red & \red & \red & \red & \red & \red & \red & \red \\ 
5 & \gray & \gray & \gray & \gray & \gray & \green & \green & \green & \green & \red & \red & \red & \red & \red & \red & \red & \red & \red & \red & \red \\ 
6 & \gray & \gray & \gray & \gray & \gray & \gray & \green & \green & \green & \green & \red & \red & \red & \red & \red & \red & \red & \red & \red & \red \\ 
7 & \gray & \gray & \gray & \gray & \gray & \gray & \gray & \green & \green & \green & \green & \red & \red & \red & \red & \red & \red & \red & \red & \red \\ 
8 & \gray & \gray & \gray & \gray & \gray & \gray & \gray & \gray & \green & \green & \green & \green & \red & \red & \red & \red & \red & \red & \red & \red \\ 
9 & \gray & \gray & \gray & \gray & \gray & \gray & \gray & \gray & \gray & \green & \green & \green & \green & \red & \red & \red & \red & \red & \red & \red \\ 
10 & \gray & \gray & \gray & \gray & \gray & \gray & \gray & \gray & \gray & \gray & \green & \green & \green & \red & \red & \red & \red & \red & \red & \red \\ 
11 & \gray & \gray & \gray & \gray & \gray & \gray & \gray & \gray & \gray & \gray & \gray & \green & \green & \red & \red & \red & \red & \red & \red & \red \\ 
12 & \gray & \gray & \gray & \gray & \gray & \gray & \gray & \gray & \gray & \gray & \gray & \gray & \green & \green & \red & \red & \red & \red & \red & \red \\ 
13 & \gray & \gray & \gray & \gray & \gray & \gray & \gray & \gray & \gray & \gray & \gray & \gray & \gray & \green & \green & \red & \red & \red & \red & \red \\ 
14 & \gray & \gray & \gray & \gray & \gray & \gray & \gray & \gray & \gray & \gray & \gray & \gray & \gray & \gray & \green & \green & \red & \red & \red & \red \\ 
15 & \gray & \gray & \gray & \gray & \gray & \gray & \gray & \gray & \gray & \gray & \gray & \gray & \gray & \gray & \gray & \green & \green & \red & \red & \red \\ 
16 & \gray & \gray & \gray & \gray & \gray & \gray & \gray & \gray & \gray & \gray & \gray & \gray & \gray & \gray & \gray & \gray & \green & \green & \red & \red \\ 
17 & \gray & \gray & \gray & \gray & \gray & \gray & \gray & \gray & \gray & \gray & \gray & \gray & \gray & \gray & \gray & \gray & \gray & \green & \green & \red \\ 
18 & \gray & \gray & \gray & \gray & \gray & \gray & \gray & \gray & \gray & \gray & \gray & \gray & \gray & \gray & \gray & \gray & \gray & \gray & \green & \green \\ 
19 & \gray & \gray & \gray & \gray & \gray & \gray & \gray & \gray & \gray & \gray & \gray & \gray & \gray & \gray & \gray & \gray & \gray & \gray & \gray & \green \\ 
\hline 
\end{tabular}
\caption{Negative and positive Greville quadrature weights (identified by red and green cells, respectively) for spline spaces of degree $p=1,\dots,20$ and smoothness $k=0,\dots,p-1$ on the unit line discretized with uniform meshes with $32$ subdivisions. Gray cells are infeasible combinations of degree and smoothness.}
\label{tab: test_conditions}
\end{table}

From \eqref{eq: quadrature_weights} and the partition of unity property,
\begin{equation*}
    \sum_{k=1}^n w_k = \int_\Omega \sum_{k=1}^n L_k(\bm{x}) \,d\bm{x} = I(1) = |\Omega| > 0.
\end{equation*}
Thus, negative weights must necessarily coexist with positive ones, which leads to indefinite lumped mass matrices. The resulting ``negative masses'' have always upset the engineering community for purely physical reasons. However, there are far more dramatic mathematical reasons for avoiding them. Firstly, if the lumped mass matrix is indefinite, $\widehat{b}$ is not an inner product and lacks a key property one would naturally expect it possesses. Secondly, indefinite lumped mass matrices may produce negative or infinite generalized eigenvalues. Their number and the conditions under which they arise were investigated in \cite{malkus1986zero,malkus1988divisor} but the results are scattered across multiple lemmas. For clarity, the conditions are summarized below in a single theorem with a short proof. We begin by recalling some well-known concepts and preliminary results before formulating the conditions. They are particularized to symmetric matrix pairs (i.e. matrix pairs $(A,B) \in \mathbb{R}^{n \times n} \times \mathbb{R}^{n \times n}$ where $A$ and $B$ are both symmetric) since those are the only cases of interest here.

\begin{definition}[Definite matrix pair]
\label{def: definite_pairs}
A symmetric matrix pair $(A,B)$ is called \emph{definite} if
\begin{equation*}
    \inf_{\substack{\bm{x} \in \mathbb{C}^n \\ \bm{x} \neq \bm{0}}} \sqrt{(\bm{x}^*A\bm{x})^2 + (\bm{x}^*B\bm{x})^2} > 0.
\end{equation*}
\end{definition}

\begin{theorem}[{\cite[Corollary VI.1.19]{stewart1990matrix}}]
\label{th: real_eigenvalues}
If $(A,B)$ is a definite pair, then there exists an invertible matrix $U \in \mathbb{R}^{n \times n}$ such that
\begin{equation*}
    U^TAU = D_\alpha = \diag(\alpha_1,\dots,\alpha_n), \quad U^TBU = D_\beta = \diag(\beta_1,\dots,\beta_n),
\end{equation*}
where $D_\alpha$ and $D_\beta$ are real diagonal matrices.
\end{theorem}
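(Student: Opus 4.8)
The plan is to reduce the statement to the existence of a single positive definite real linear combination of $A$ and $B$, and then to run the classical simultaneous‑diagonalization‑by‑congruence argument. The heart of the proof is therefore the following lemma: \emph{if $(A,B)$ is a definite pair in the sense of \Cref{def: definite_pairs}, then there are $\mu,\nu \in \mathbb{R}$, not both zero, with $\mu A + \nu B \succ 0$.}

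To prove the lemma I would work with the joint numerical range
\[
    W = \{(\bm{x}^*A\bm{x},\, \bm{x}^*B\bm{x}) : \bm{x} \in \mathbb{C}^n,\ \|\bm{x}\|_2 = 1\} \subset \mathbb{R}^2,
\]
which is precisely the numerical range of the complex matrix $A + iB$. By the Toeplitz--Hausdorff theorem $W$ is convex, and it is compact as a continuous image of the unit sphere. The hypothesis that $(A,B)$ is definite means precisely that $(0,0) \notin W$. A compact convex set avoiding the origin can be strictly separated from it, so there exist $(\mu,\nu) \in \mathbb{R}^2 \setminus \{(0,0)\}$ and $\gamma > 0$ with $\mu a + \nu b \ge \gamma$ for every $(a,b) \in W$; by homogeneity of the quadratic forms this reads $\bm{x}^*(\mu A + \nu B)\bm{x} \ge \gamma\|\bm{x}\|_2^2$ for all $\bm{x} \in \mathbb{C}^n$, i.e. the real symmetric matrix $\mu A + \nu B$ is positive definite. (It is essential that $\bm{x}$ range over $\mathbb{C}^n$ rather than $\mathbb{R}^n$, since for two symmetric matrices the real joint numerical range need not be convex.) Finally, since positive definiteness is an open condition, replacing $(\mu,\nu)$ by $(\mu,\nu+\varepsilon)$ for a suitably small $\varepsilon \neq 0$ keeps the combination positive definite while forcing $\nu \neq 0$.

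With $C := \mu A + \nu B \succ 0$ and $\nu \neq 0$ secured, I would factor $C = R^TR$ with $R$ real and invertible (Cholesky) and set $\tilde A := R^{-T}AR^{-1}$, $\tilde B := R^{-T}BR^{-1}$. Both are real symmetric and $\mu\tilde A + \nu\tilde B = R^{-T}CR^{-1} = I$, so $\tilde B = \nu^{-1}(I - \mu\tilde A)$ is a polynomial in $\tilde A$; hence $\tilde A$ and $\tilde B$ commute, and by the spectral theorem there is a real orthogonal $Q$ diagonalizing both. Then $U := R^{-1}Q$ is invertible and $U^TAU = Q^T\tilde AQ = D_\alpha$, $U^TBU = Q^T\tilde BQ = D_\beta$ are real diagonal, which is the claim.

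The routine ingredients -- the Cholesky factorization, the spectral theorem, and the elementary rescalings -- cause no difficulty; the main obstacle is the convexity of the numerical range (Toeplitz--Hausdorff), which I would either cite or, since only two Hermitian matrices are in play, prove via the standard reduction to a $2\times 2$ computation on two-dimensional subspaces. The other point requiring a little care is that the separation of $\{(0,0)\}$ from $W$ be \emph{strict}: this uses compactness of $W$, and is precisely why the hypothesis is formulated as definiteness of the pair rather than merely $(0,0)\notin W$. This argument reproduces the one behind \cite[Corollary~VI.1.19]{stewart1990matrix}.
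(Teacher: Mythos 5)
The paper does not actually prove this statement: it is quoted as Corollary VI.1.19 of Stewart and Sun, so there is no in-paper argument to compare against. Your proof is correct and is essentially the standard argument behind that corollary: convexity of the numerical range of $A+iB$ (Toeplitz--Hausdorff) together with compactness yields a strictly separating linear functional, hence a real combination $\mu A+\nu B\succ 0$; congruence by the inverse Cholesky factor turns that combination into the identity, after which $\tilde B$ is an affine function of $\tilde A$, the two commute, and the spectral theorem finishes. The delicate points are all handled: you correctly insist on complex test vectors (the real joint numerical range of two symmetric matrices can fail to be convex when $n=2$), you use compactness of $W$ to get \emph{strict} separation, and the perturbation forcing $\nu\neq 0$ is legitimate because positive definiteness is an open condition (though the case $\nu=0$ would also be harmless, since then $\tilde A$ is a scalar multiple of $I$ and one simply diagonalizes $\tilde B$ orthogonally). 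One small remark: \Cref{def: definite_pairs} as printed omits the normalization $\|\bm{x}\|_2=1$, without which the infimum is always zero by scaling; your reading of definiteness as $(0,0)\notin W$ with $W$ taken over the unit sphere is the intended one and matches the cited source.
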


In others words, \Cref{th: real_eigenvalues} shows that definite pairs have real (possibly infinite) eigenvalues. For characterizing the eigenvalues of $(K,\widehat{M})$, we still need to define the \emph{inertia} of a symmetric matrix.

\begin{definition}
\label{def: inertia}
For a symmetric matrix $A$, the inertia of $A$ is the ordered triple
\begin{equation*}
    i(A) = (i_+(A),i_-(A),i_0(A)),
\end{equation*}
where $i_+(A)$, $i_-(A)$ and $i_0(A)$ denote the number of positive, negative and zero eigenvalues, respectively.
\end{definition}

We are now ready to state the result.

\begin{theorem}
\label{th: spectrum}
Let $(A,B)$ be a symmetric matrix pair with inertia
\begin{equation*}
    i(A) = (i_+(A),0,i_0(A)), \quad i(B) = (i_+(B),i_-(B),i_0(B)).
\end{equation*}
If $\bm{x}^*B\bm{x} \neq 0$ for all $\bm{x} \in \ker(A)$, then $(A,B)$ has real eigenvalues, including
\begin{itemize}[noitemsep]
    \item $i_0(A)$ zero eigenvalues,
    \item $i_+(B)$ nonnegative finite eigenvalues,
    \item $i_-(B)$ nonpositive finite eigenvalues,
    \item $i_0(B)$ infinite (positive) eigenvalues.
\end{itemize}
\end{theorem}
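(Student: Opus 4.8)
The plan is to route the whole statement through the simultaneous diagonalization provided by \Cref{th: real_eigenvalues}. The first step is to check that the hypotheses make $(A,B)$ a definite pair in the sense of \Cref{def: definite_pairs}; granting that, \Cref{th: real_eigenvalues} yields an invertible real $U$ with $U^TAU = D_\alpha = \diag(\alpha_1,\dots,\alpha_n)$ and $U^TBU = D_\beta = \diag(\beta_1,\dots,\beta_n)$, and the eigenvalues of the matrix pair $(A,B)$ are then precisely the ratios $\lambda_i = \alpha_i/\beta_i$ (a finite real number when $\beta_i \neq 0$, and a positive infinite eigenvalue when $\beta_i = 0$ but $\alpha_i \neq 0$). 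Everything else is Sylvester's law of inertia applied to these two congruences, together with one observation tying $\ker A$ to the vanishing of the $\alpha_i$.

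For the definiteness check I would argue by contradiction with a compactness argument on the complex unit sphere. If the infimum in \Cref{def: definite_pairs} vanished, there would be unit vectors $\bm{x}_k \in \mathbb{C}^n$ with $\bm{x}_k^*A\bm{x}_k \to 0$ and $\bm{x}_k^*B\bm{x}_k \to 0$; a convergent subsequence produces a unit vector $\bm{x}$ with $\bm{x}^*A\bm{x} = 0 = \bm{x}^*B\bm{x}$. Writing $\bm{x} = \bm{u} + i\bm{v}$ with $\bm{u},\bm{v}$ real, symmetry of $A$ gives $\bm{x}^*A\bm{x} = \bm{u}^TA\bm{u} + \bm{v}^TA\bm{v}$, and since $i_-(A) = 0$ forces $A \succeq 0$, both summands vanish, hence $A\bm{u} = A\bm{v} = 0$ (for $A \succeq 0$, $\bm{w}^TA\bm{w} = 0$ implies $A\bm{w} = 0$), i.e. $\bm{x} \in \ker A$. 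This contradicts the standing assumption $\bm{x}^*B\bm{x} \neq 0$ on $\ker A$. Thus $(A,B)$ is definite, \Cref{th: real_eigenvalues} applies, and in particular all eigenvalues are real or infinite.

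With $U$, $D_\alpha$, $D_\beta$ fixed, Sylvester's law of inertia gives $i(D_\alpha) = i(A)$ and $i(D_\beta) = i(B)$: since $i_-(A) = 0$, exactly $i_0(A)$ of the $\alpha_i$ are zero and the remaining $n - i_0(A)$ are positive, while $i_+(B)$, $i_-(B)$, $i_0(B)$ of the $\beta_i$ are positive, negative, and zero respectively. The key point is that no index can have $\alpha_i = 0$ and $\beta_i = 0$ simultaneously: if $\alpha_i = 0$ then, with $\bm{u}_i$ the $i$-th column of $U$, we have $\bm{u}_i^TA\bm{u}_i = 0$, so $\bm{u}_i \in \ker A$ by positive semidefiniteness, whence $\beta_i = \bm{u}_i^TB\bm{u}_i \neq 0$ by hypothesis. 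Hence each index $i$ contributes exactly one eigenvalue $\lambda_i = \alpha_i/\beta_i$, which we can sort: the $i_0(A)$ indices with $\alpha_i = 0$ give $\lambda_i = 0$; the $i_0(B)$ indices with $\beta_i = 0$ have $\alpha_i > 0$ and give a positive infinite eigenvalue; every remaining index has $\alpha_i > 0$ and $\beta_i \neq 0$, so $\lambda_i$ is finite, nonzero, with the sign of $\beta_i$. Grouping by the sign of $\beta_i$ then gives $i_+(B)$ nonnegative finite eigenvalues and $i_-(B)$ nonpositive finite eigenvalues (the zero eigenvalues being absorbed into one group or the other according to the sign of their $\beta_i$), which is exactly the asserted list.

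There is no heavy computation once the proof is organized this way; the two places that need care are the reduction to a definite pair — specifically the compactness step and the use of $A \succeq 0$ to place the limiting vector in $\ker A$, which is where the hypothesis on $\ker A$ earns its keep — and the final tally, where one must not read the four bullet points as a partition: the $i_0(A)$ zero eigenvalues are counted simultaneously among the nonnegative finite and the nonpositive finite ones, and only then do the counts add up to the $n$ eigenvalues (infinities included) of a regular pencil. It is worth noting in passing that the $\ker A$ hypothesis is precisely what rules out the degenerate case $\alpha_i = \beta_i = 0$, i.e. a singular pencil whose eigenvalues are not even well defined — the kind of ill-posedness that must be excluded before the eigenvalue count above makes sense.
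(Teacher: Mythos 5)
Your proof is correct and follows essentially the same route as the paper's: establish that $(A,B)$ is a definite pair from $A\succeq 0$ and the hypothesis on $\ker A$, simultaneously diagonalize via \Cref{th: real_eigenvalues}, apply Sylvester's law of inertia, and read off the eigenvalues as the ratios $\alpha_i/\beta_i$. Your version is somewhat more explicit in two spots the paper glosses over — the compactness argument behind the positivity of the infimum and the observation that $\alpha_i=0$ forces $\beta_i\neq 0$ — but these are elaborations of the same argument, not a different one.
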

\begin{proof}
We first show that $(A,B)$ is a definite pair. Since $A$ is positive semidefinite, $\bm{x}^*A\bm{x} \geq 0$ for all $\bm{x} \in \mathbb{C}^n$ and $\bm{x}^*A\bm{x} = 0$ if and only if $\bm{x} \in \ker(A)$. Thus, if $\bm{x}^*B\bm{x} \neq 0$ for all $\bm{x} \in \ker(A)$, then
\begin{equation*}
    \inf_{\substack{\bm{x} \in \mathbb{C}^n \\ \bm{x} \neq \bm{0}}} \sqrt{(\bm{x}^*A\bm{x})^2 + (\bm{x}^*B\bm{x})^2} > 0
\end{equation*}
and $(A,B)$ is a definite pair. Thanks to \Cref{th: real_eigenvalues}, its eigenvalues are real and there exists an invertible matrix $U \in \mathbb{R}^{n \times n}$ such that
\begin{equation*}
    U^TAU = D_\alpha = \diag(\alpha_1,\dots,\alpha_n), \quad U^TBU = D_\beta = \diag(\beta_1,\dots,\beta_n),
\end{equation*}
where $D_\alpha$ and $D_\beta$ are real diagonal matrices. Since $U^TAU$ and $U^TBU$ are congruence transformations, it follows from Sylvester's law of inertia \cite[Theorem 4.5.8]{horn2012matrix} that
\begin{equation*}
    i(D_\alpha) = (i_+(A),0,i_0(A)), \quad i(D_\beta) = (i_+(B),i_-(B),i_0(B)).
\end{equation*}
Since $(A,B)$ is definite, $(\alpha_i,\beta_i) \neq (0,0)$ and the eigenvalues of $(A,B)$ are given by the ratios $\alpha_i/\beta_i$. The result then follows from the fact that $\alpha_i \geq 0$.
\end{proof}

If $A$ is positive definite, ``nonnegative'' and ''nonpositive'' in the statement of \Cref{th: spectrum} may be replaced by ``positive'' and ``negative'', respectively. Note that the assumption of \Cref{th: spectrum} is equivalent to stating that $B$ is definite on the kernel of $A$. In terms of bilinear forms, it amounts to saying that
\begin{equation*}
    |\widehat{b}(v,v)| > 0 \quad \forall v \in \mathbb{S}^{\perp_a} = \{u \in \mathbb{S} \colon a(u,v)=0 \; \forall v \in \mathbb{S}\}.
\end{equation*}
If this assumption does not hold, $(A,B)$ may have complex eigenvalues. The situation is even worse if $A$ and $B$ share a common null space since $(A,B)$ is not even regular \cite{stewart1990matrix}. Fortunately, for the standard Laplace operator, the assumption of \Cref{th: spectrum} is satisfied since $\mathbb{S}^{\perp_a} = \Span \{1\}$ and
\begin{equation*}
    \widehat{b}(1,1) = \sum_{k=1}^n w_k = |\Omega| > 0.
\end{equation*}
Moreover, since $u=1$ is also an eigenfunction, the zero eigenvalue is counted among the nonnegative eigenvalues and \Cref{th: spectrum} shows that there are as many negative eigenvalues as there are negative weights and as many infinite (positive) eigenvalues as there are zero weights. In numerical computations, the latter is very unlikely but the former is a serious concern. As a matter of fact, the exact solution of the semi-discrete problem \eqref{eq: semi_discrete_pb} might diverge and so does the fully discrete solution, if the time step is sufficiently small \cite{cohen2002higher,malkus1988reversed,duczek2019critical}. Clearly, negative weights must be avoided at all cost, and so the strategy recently proposed in \cite{li2022significance} based on the Greville abscissa should only be used when the quadrature weights are positive.
However, since negative weights may emerge for certain quadrature points such as the Greville abscissa, we are naturally led to the following questions:
\begin{enumerate}[noitemsep]
    \item For a given set of interpolation points, are there sufficient conditions guaranteeing positive weights?
    \item Is there a set of interpolation points that always produces positive weights?
\end{enumerate}

We will answer the first question and later give strong evidence for positively answering the second one. Let us recall that the quadrature weights are the solution of $A^T\bm{w}=\bm{b}$, where $A$ is nonnegative with positive diagonal and $\bm{b}$ is positive. Such systems are often called \emph{positive} and their solution has already been studied by several authors \cite{kaykobad1985positive,voet2022internodes}. 

\begin{definition}[Positive system]
\label{def: positive_system}
A linear system $A\bm{x} = \bm{b}$ with $A \in \mathbb{R}^{n \times n}$ and $\bm{b} \in \mathbb{R}^n$ is called positive if
\begin{equation*}
\begin{cases}
 a_{ij} \geq 0, \ a_{ii}>0 & i,j=1,\dots,n, \\
 b_i > 0 & i=1,\dots,n.
\end{cases}
\end{equation*}
\end{definition}
Positive systems may not have a positive solution, unless the coefficient matrix and right-hand side verify additional conditions. Such conditions are provided in the next lemma.

\begin{lemma}
\label{lem: positive_solution}
For a positive system $A\bm{x} = \bm{b}$, if
\begin{equation*}
    b_i > \sum_{\substack{j=1 \\ j \neq i}}^n \frac{a_{ij}}{a_{jj}}b_j \qquad i=1,\dots,n,
\end{equation*}
then $A$ is invertible and the components of the solution satisfy
\begin{equation*}
    0 < x_i \leq \frac{b_i}{a_{ii}} \qquad i=1,\dots,n.
\end{equation*}
\end{lemma}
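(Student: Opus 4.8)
The statement I want to prove is Lemma~\ref{lem: positive_solution}: for a positive system $A\bm{x}=\bm{b}$ satisfying the strict diagonal-dominance-type condition $b_i > \sum_{j \neq i} \frac{a_{ij}}{a_{jj}} b_j$, the matrix $A$ is invertible and $0 < x_i \leq b_i/a_{ii}$. The natural strategy is to rescale the system so the diagonal becomes the identity, reducing the hypothesis to a clean statement about a matrix of the form $I + N$ with $N$ nonnegative and small row sums, and then exploit a Neumann-series / fixed-point argument together with the positivity structure. Concretely, let $D = \diag(a_{11},\dots,a_{nn}) \succ 0$ and write $A = D(I + N)$ where $N = D^{-1}(A - D)$ has zero diagonal and nonnegative off-diagonal entries $N_{ij} = a_{ij}/a_{ii}$ for $i \neq j$. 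The hypothesis, however, involves $a_{ij}/a_{jj}$ rather than $a_{ij}/a_{ii}$, so it is cleaner to work with the column-scaled matrix instead: set $\bm{y} = D\bm{x}$, so that $A\bm{x} = A D^{-1} \bm{y} = \bm{b}$, and the matrix $B := AD^{-1}$ has entries $B_{ij} = a_{ij}/a_{jj}$, diagonal entries all equal to $1$, and the hypothesis reads exactly $b_i > \sum_{j \neq i} B_{ij} b_j$, i.e. $(B\bm{b})_i < 2b_i$ is not quite it --- rather $\bm{b}$ strictly dominates the off-diagonal action of $B$ on $\bm{b}$.

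First I would establish invertibility of $A$, equivalently of $B = AD^{-1}$. Write $B = I + M$ with $M = B - I \geq 0$ having zero diagonal. The hypothesis says $(M\bm{b})_i < b_i$ for all $i$, i.e. $M\bm{b} < \bm{b}$ componentwise with $\bm{b} > \bm{0}$; this forces the spectral radius $\rho(M) < 1$ (a standard Perron--Frobenius fact: if a nonnegative matrix $M$ admits a positive vector $\bm{b}$ with $M\bm{b} \le \theta \bm{b}$ for some $\theta < 1$, then $\rho(M) \le \theta$ --- apply it with $\theta = \max_i (M\bm{b})_i/b_i < 1$). Hence $I + M$ is invertible, and so are $B$ and $A = DB$.

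Next I would pin down the sign and the bound on the solution. Since $\rho(M) < 1$, the solution of $(I+M)\bm{y} = \bm{b}$ is $\bm{y} = \sum_{k \ge 0} (-1)^k M^k \bm{b}$, but the alternating signs obstruct a direct positivity read-off, so instead I would argue directly. Let $\bm{y}$ solve $(I+M)\bm{y} = \bm{b}$, i.e. $\bm{y} = \bm{b} - M\bm{y}$. Pick the index $i$ where $y_i$ is smallest (most negative, if any component is negative). Then $y_i = b_i - (M\bm{y})_i \ge b_i - (M(\text{vector with all entries} \ge y_i))_i$; since $M \ge 0$ and $\bm{1}^T M$... more carefully: $(M\bm{y})_i = \sum_{j\ne i} M_{ij} y_j \le \big(\sum_{j\ne i} M_{ij}\big)\cdot \max_j y_j$ when the max is nonnegative, which doesn't immediately close. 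The cleanest route: show componentwise $\bm{0} < \bm{y} \le \bm{b}$ by a monotone-iteration / induction on the relation $\bm{y} = \bm{b} - M\bm{y}$. Define $\bm{y}^{(0)} = \bm{b}$ and $\bm{y}^{(m+1)} = \bm{b} - M\bm{y}^{(m)}$; this converges to $\bm{y}$ because $\rho(M)<1$. Then $\bm{y}^{(1)} = \bm{b} - M\bm{b}$, and $0 < b_i - (M\bm{b})_i$ by hypothesis while $b_i - (M\bm{b})_i \le b_i$ since $M\bm{b} \ge \bm{0}$; so $\bm{0} < \bm{y}^{(1)} \le \bm{b} = \bm{y}^{(0)}$. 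The map $\bm{z} \mapsto \bm{b} - M\bm{z}$ is antitone, so from $\bm{y}^{(1)} \le \bm{y}^{(0)}$ we get $\bm{y}^{(2)} \ge \bm{y}^{(1)}$, and inductively the even iterates increase and the odd iterates decrease, both staying in $[\bm{y}^{(1)}, \bm{y}^{(0)}] = [\bm{b} - M\bm{b}, \bm{b}]$, hence the limit $\bm{y}$ satisfies $\bm{0} < \bm{b} - M\bm{b} \le \bm{y} \le \bm{b}$. Unwinding $\bm{x} = D^{-1}\bm{y}$ gives $0 < x_i \le b_i/a_{ii}$, as claimed.

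The main obstacle I anticipate is the positivity of the solution: the Neumann series has alternating signs, so one cannot read positivity off it term-by-term, and the naive "smallest component" argument doesn't obviously close because $M$ is not assumed to have row sums bounded by $1$ (the hypothesis bounds a weighted action of $M$ on the \emph{specific} vector $\bm{b}$, not on $\bm{1}$). The monotone-iteration argument above circumvents this by keeping the iterates trapped between $\bm{b} - M\bm{b} > \bm{0}$ and $\bm{b}$; the one technical point to verify carefully is that this fixed-point iteration converges (which follows from $\rho(M) < 1$, itself a consequence of the Perron--Frobenius-type comparison in the first step). Everything else --- the rescalings, invertibility, and the final unwinding to recover the bound on $x_i$ --- is routine bookkeeping.
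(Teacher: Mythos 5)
Your proof is correct, but it takes a genuinely different route from the paper. The paper's proof is a two-line reduction: it rescales the system to $D_1^{-1}AD_2^{-1}\tilde{\bm{x}}=D_1^{-1}\bm{b}$ with $D_1=\diag(\bm{b})$ and $D_2=\diag(D_1^{-1}A)$, and then invokes Theorem~2.2 of \cite{voet2022internodes} as a black box before undoing the scaling. You instead prove everything from scratch: the column scaling $B=AD^{-1}=I+M$ with $M\geq 0$ and zero diagonal turns the hypothesis into the componentwise inequality $M\bm{b}<\bm{b}$ with $\bm{b}>\bm{0}$; the Perron--Frobenius bound $\rho(M)\leq\max_i (M\bm{b})_i/b_i<1$ gives invertibility; and the antitone fixed-point iteration $\bm{y}^{(m+1)}=\bm{b}-M\bm{y}^{(m)}$ traps the solution in $[\bm{b}-M\bm{b},\,\bm{b}]$, which yields both the positivity and the upper bound $x_i\leq b_i/a_{ii}$ after unwinding $\bm{x}=D^{-1}\bm{y}$. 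Each step checks out (the even/odd interleaving of the iterates is the standard argument for antitone maps, and convergence follows from $\rho(M)<1$); your trapping argument in fact recovers the slightly sharper lower bound $y_i\geq b_i-(M\bm{b})_i>0$, and one could shortcut the iteration by writing $(I+M)^{-1}\bm{b}=\sum_{k\geq 0}M^{2k}(\bm{b}-M\bm{b})$, a manifestly nonnegative series. What your approach buys is a self-contained, elementary proof that makes the mechanism (essentially an M-matrix/monotone-iteration argument) visible, at the cost of more length; the paper's approach buys brevity by outsourcing exactly this content to the cited reference. The only blemish is the trivial slip $A=DB$, which should read $A=BD$ since $B=AD^{-1}$; it has no bearing on the invertibility conclusion.
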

\begin{proof}
The result simply follows from applying \cite[Theorem 2.2]{voet2022internodes} to the rescaled system $D_1^{-1}AD_2^{-1}\tilde{\bm{x}}=D_1^{-1}\bm{b}$, where $D_1=\diag(\bm{b})$, $D_2=\diag(D_1^{-1}A)$, $\tilde{\bm{x}}=D_2\bm{x}$ and then back-transforming to the original variables.    
\end{proof}

\begin{remark}
The conditions in \Cref{lem: positive_solution} were independently shown in \cite{kaykobad1985positive}, but the author does not provide an upper bound on the solution.
\end{remark}

Applied to the moment-fitting equations \eqref{eq: moment_fitting}, \Cref{lem: positive_solution} leads to the following conditions.

\begin{corollary}
\label{cor: sufficient_conditions}
For the moment-fitting equations \eqref{eq: moment_fitting}, if
\begin{equation*}
    I(B_i) > \sum_{\substack{j=1 \\ j \neq i}}^n \frac{B_i(\bm{x}_j)}{B_j(\bm{x}_j)}I(B_j) \qquad i=1,\dots,n,
\end{equation*}
then the quadrature weights satisfy
\begin{equation*}
    0 < w_i \leq \frac{I(B_i)}{B_i(\bm{x}_i)} \qquad i=1,\dots,n.
\end{equation*}
\end{corollary}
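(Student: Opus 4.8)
The plan is to observe that the moment-fitting equations \eqref{eq: moment_fitting} constitute a positive system in the sense of \Cref{def: positive_system} and then to apply \Cref{lem: positive_solution} verbatim, translating the generic notation into the B-spline setting.

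First I would make the dictionary between the two formulations explicit. In \eqref{eq: moment_fitting} the coefficient matrix is $A^T$, whose $(i,j)$ entry is $B_i(\bm{x}_j)$, the unknown is the weight vector $\bm{w}$, and the right-hand side has entries $b_i = I(B_i) = \int_\Omega B_i(\bm{x})\,d\bm{x}$. The diagonal entries $(A^T)_{ii} = B_i(\bm{x}_i)$ are strictly positive: this is precisely the Schoenberg--Whitney condition of \Cref{th: schoenberg_whitney}, which we assume to hold since the interpolation points are unisolvent. The off-diagonal entries $B_i(\bm{x}_j)$ are nonnegative by the nonnegativity of the B-spline basis, and each $b_i$ is strictly positive because $B_i \geq 0$ is not identically zero on $\Omega$. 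Hence $A^T \bm{w} = \bm{b}$ satisfies the hypotheses of \Cref{def: positive_system}.

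Second, I would invoke \Cref{lem: positive_solution} with the substitutions $A \leftarrow A^T$, $\bm{x} \leftarrow \bm{w}$ and $\bm{b}$ unchanged. Under these substitutions $a_{ij}$ becomes $B_i(\bm{x}_j)$ and $a_{jj}$ becomes $B_j(\bm{x}_j)$, so the diagonal-dominance-type hypothesis $b_i > \sum_{j \neq i} \frac{a_{ij}}{a_{jj}} b_j$ of the lemma becomes exactly the stated assumption $I(B_i) > \sum_{j \neq i} \frac{B_i(\bm{x}_j)}{B_j(\bm{x}_j)} I(B_j)$, and the conclusion $0 < x_i \leq b_i/a_{ii}$ becomes $0 < w_i \leq I(B_i)/B_i(\bm{x}_i)$, which is the claim.

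There is no genuine obstacle here: the corollary is an immediate specialization of \Cref{lem: positive_solution}. The only point requiring a little care is the transposition, namely that \eqref{eq: moment_fitting} features $A^T$ rather than $A$; since $A$ and $A^T$ share the same diagonal $B_i(\bm{x}_i)$, the relevant normalizing factors are unchanged, and the off-diagonal term appearing in the $i$-th constraint is $B_i(\bm{x}_j)$, exactly as written in the statement.
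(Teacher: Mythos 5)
Your proof is correct and follows exactly the paper's route: the paper likewise proves the corollary by applying \Cref{lem: positive_solution} directly to the moment-fitting system $A^T\bm{w}=\bm{b}$. Your additional verification that the system is positive (positive diagonal via Schoenberg--Whitney, nonnegative off-diagonals and positive right-hand side via B-spline nonnegativity) is a welcome elaboration of what the paper leaves implicit.
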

\begin{proof}
The result immediately follows from applying \Cref{lem: positive_solution} to the moment-fitting equations \eqref{eq: moment_fitting}.     
\end{proof}

For the Greville abscissa, the conditions stated in \Cref{cor: sufficient_conditions} are typically satisfied for low order splines. In particular, they partly recover the intriguing pattern observed in \Cref{tab: test_conditions} for quartic splines, being satisfied for smoothness $k=0$ and $k=2$, but not for $k=1$. Unfortunately though, they are never satisfied from degree $5$ onward. Differences in the values of $I(B_i)$ are the main cause of breakdown. For open knot vectors and uniform meshes, $I(B_i)$ is much smaller for basis functions near the boundaries than for those in the interior and increasing the spline order only exacerbates this discrepancy. Thus, the conditions of \Cref{cor: sufficient_conditions} are quickly violated for basis functions near the boundaries. Similar restrictive conditions held for the Demko points. However, in this case, the quadrature weights were always positive, even for the wildest knot vectors we could imagine (see \Cref{se: non_uniform_meshes}). This is a clear limitation of purely algebraic techniques. Unfortunately, since \Cref{lem: positive_solution} is derived from \cite[Theorem 2.2]{voet2022internodes} and none of the assumptions therein can be relaxed, finding more lenient algebraic conditions is very unlikely.

Nevertheless, this finding suggests the existence of a set of quadrature points (i.e. the Demko points) guaranteeing positive weights, a property that does not seem entirely surprising given the optimal nature of Demko points for interpolation \cite{demko1985existence,morken1996total,smith1988knots}. Indeed, interpolatory quadrature rules inherit some of the properties of the underlying interpolation operator. For example, the negative weights appearing for high order Newton-Cotes quadrature rules are an immediate consequence of the Runge phenomenon developing on uniformly spaced interpolation nodes \cite{trefethen2019approximation}. We believe similar issues might also arise for interpolatory spline quadrature rules, which are built on similar concepts. For spline spaces, the interpolation operator $P \colon C^0(\Omega) \to \mathbb{S}$ is constructed as follows: given a continuous function $f \in C^0(\Omega)$ and a unisolvent set of interpolation points $\{\bm{x}_i\}_{i=1}^n$, $Pf$, the interpolant of $f$, is the unique spline satisfying
\begin{equation*}
    Pf(\bm{x}_i)=f(\bm{x}_i) \quad i=1,\dots,n.
\end{equation*}
The coefficients of $Pf(\bm{x}) = \sum_{i=1}^n \alpha_i B_i(\bm{x})$ are obtained by solving the linear system
\begin{equation*}
    A \bm{\alpha} = \bm{f},
\end{equation*}
where $A$ is the collocation matrix defined in \eqref{eq: def_matrices} and $f_i=f(\bm{x}_i)$. The interpolation operator is linear and since it also satisfies $Ps=s \; \forall s \in \mathbb{S}$, it is sometimes called an interpolating projection. Furthermore, since the quadrature rule is built on the same interpolation points and $Q(s)=I(s) \; \forall s \in \mathbb{S}$, we immediately deduce that
\begin{equation*}
    Q(f) = Q(Pf) = I(Pf).
\end{equation*}
These relations suggest a close connection between quadrature and interpolation whereby instabilities for the latter could cause instabilities for the former. The stability of spline interpolants was extensively studied in the 1980s. The problem consists in finding a set of interpolation points such that $\|Pf\| \leq C \|f\|$ for some constant $C$ independent of the knot distribution. In the following derivations, the norm employed is the infinity norm, unless stated otherwise. From the stability of the B-spline basis \cite{lyche2018spline}, we immediately deduce that
\begin{equation*}
    \|Pf\| \leq \|\bm{\alpha}\| = \|A^{-1}\bm{f}\| \leq \|A^{-1}\|\|f\|.
\end{equation*}
Therefore, we deduce the following upper bound on the norm of the interpolation operator \cite[Lemma 1.1]{de1975bounding},
\begin{equation*}
    \|P\| = \sup_{f \in C^0(\Omega)} \frac{\|Pf\|}{\|f\|} \leq \|A^{-1}\|.
\end{equation*}

The Greville abscissa are provably stable (i.e. $\|A^{-1}\|$ is bounded independently of the knot distribution) only for degree $p \leq 3$ \cite{de1975bounding,marsden1974quadratic}. For splines of very high degree $p \geq 20$, counter-examples on graded meshes were constructed, demonstrating the instability of spline interpolation for the Greville abscissa \cite{jia1988spline}. In contrast, Demko \cite{demko1985existence} proved that there exists a set of interpolation points for which spline interpolation is stable. Nowadays, those points are widely known as the Demko points, although Mørken proved an identical result in his Master thesis \cite{morken1984two} at about the same time.

\begin{theorem}[{\cite{demko1985existence}}]
\label{th: stable_interpolation}
For the Demko points,
\begin{equation*}
    \|A^{-1}\| \leq \kappa(\dutchcal{B}),
\end{equation*}
where $\kappa(\dutchcal{B})$ is the condition number of the B-spline basis.
\end{theorem}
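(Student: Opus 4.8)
The plan is to bound $\|A^{-1}\|$ by exploiting the equioscillation property \eqref{eq: equioscillation}--\eqref{eq: max_norm} that defines the Demko points. The key object is the Chebyshev spline $s \in \mathbb{S}$ with $s(\hat{\bm x}_i) = (-1)^{i+1}$ and $\|s\|_\infty = 1$. Write $s = \sum_{k=1}^n \gamma_k B_k$; then the interpolation data vector is $\bm d$ with $d_i = (-1)^{i+1}$, so $A\bm\gamma = \bm d$, i.e. $\bm\gamma = A^{-1}\bm d$. Since $\|\bm d\|_\infty = 1$, this single instance already shows $\|\bm\gamma\|_\infty \le \|A^{-1}\|_\infty$, but more importantly we will argue the reverse-type estimate: the Chebyshev spline is, by its defining optimality, the worst case for the interpolation problem, so $\|A^{-1}\|$ is \emph{attained} (up to the basis stability constant) on this data.

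The main steps I would carry out are: (i) Recall the definition of the condition number $\kappa(\dutchcal B)$ of the B-spline basis as the product $\kappa(\dutchcal B) = \|\,\cdot\,\|_{\dutchcal B}$ of the two stability constants, where $\|\sum c_k B_k\|_\infty \le \|\bm c\|_\infty$ (upper B-spline stability, trivial from the partition of unity and nonnegativity) and $\|\bm c\|_\infty \le \kappa(\dutchcal B)\,\|\sum c_k B_k\|_\infty$ (lower stability, the de Boor--Fix / Lyche estimate). (ii) Fix an arbitrary data vector $\bm f$ with $\|\bm f\|_\infty = 1$ and let $g = Pf = \sum_k \alpha_k B_k$ with $A\bm\alpha = \bm f$; the goal is $\|\bm\alpha\|_\infty \le \kappa(\dutchcal B)$. (iii) By lower B-spline stability, $\|\bm\alpha\|_\infty \le \kappa(\dutchcal B)\,\|g\|_\infty$, so it suffices to show $\|g\|_\infty \le 1$ for \emph{every} spline $g$ that interpolates data of modulus at most one at the Demko points — in other words, that the Demko points form a \emph{Lebesgue-constant-one} node set in the sup norm. (iv) This last fact is the heart of the matter and is exactly the extremal characterization of the Demko points: the Chebyshev spline $s$ equioscillates between $\pm 1$ at the nodes and has $\|s\|_\infty = 1$, and a sign/comparison argument (if some interpolant $g$ with $|g(\hat{\bm x}_i)|\le 1$ had $\|g\|_\infty > 1$, then an appropriate combination $g - \mu s$ would have more sign changes than its spline order permits, contradicting the variation-diminishing property of splines, i.e. the generalized Budan--Fourier / Rolle bound on the number of zeros of a spline in $\mathbb S$). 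Assembling (ii)--(iv) gives $\|P\|_\infty \le \kappa(\dutchcal B)$, and combined with $\|P\| \ge \|A^{-1}\|\cdot(\text{upper stability})^{-1}$ — or more directly the bound $\|A^{-1}\| = \sup_{\|\bm f\|=1}\|\bm\alpha\| \le \kappa(\dutchcal B)$ from step (iii) — the theorem follows.

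The main obstacle I anticipate is step (iv): proving that the Demko nodes have unit Lebesgue constant, i.e. that no spline interpolant can overshoot data bounded by one. The clean way is the zero-counting argument: suppose $g \in \mathbb S$ with $|g(\hat{\bm x}_i)| \le 1$ for all $i$ but $g(\bar x) > 1$ for some $\bar x$ (the case $g(\bar x) < -1$ is symmetric). Then, because $s$ equioscillates, one can choose the sign pattern so that $s - g$ (or $s + g$) changes sign at least $n$ times on $(\xi_{p+1}, \xi_{n+1})$ — one sign change forced between consecutive Demko points by the alternation of $s$, plus at least one extra near $\bar x$ — yet $s - g \in \mathbb S$ has at most $n-1$ interior zeros by the standard bound on zeros of splines of degree $p$ with the given knot multiplicities. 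This contradiction closes the argument. The subtlety is bookkeeping the zero count correctly at the endpoints and at knots, and handling the boundary Demko points $\hat x_1, \hat x_n$ which sit at $0$ and $1$; for this I would invoke the zero-counting results in \cite{lyche2018spline} (or \cite{smith1988knots}, who establish precisely the uniqueness and extremal properties of the Demko points) rather than re-deriving them. Everything else — the two B-spline stability bounds and the passage $\|A^{-1}\| \le \kappa(\dutchcal B)$ — is routine once (iv) is in hand.
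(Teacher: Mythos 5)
Your reduction breaks at step (iv). Steps (ii)--(iii) require $\|g\|_\infty\le\max_i|g(\hat{\bm x}_i)|$ for \emph{every} $g\in\mathbb{S}$, i.e.\ that the Demko points have Lebesgue constant one. That is false. Since the Lagrange spline basis is a partition of unity (\Cref{lem: LML_QML}), $\max_x\sum_i|L_i(x)|=1$ would force every $L_i$ to be nonnegative, whereas these functions visibly under- and overshoot --- the paper remarks on exactly this when discussing \Cref{fig: spline_bases}. It already fails for a single polynomial piece, where the Demko points are the Chebyshev extrema: on $[-1,1]$ the quadratic $g(x)=1-x-x^2$ takes the values $1,1,-1$ at the extrema $-1,0,1$ of $T_2$, yet $g(-1/2)=5/4$. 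Your zero-counting rescue cannot repair this, because $s-g$ only \emph{weakly} alternates at the nodes and the excursion of $g$ above $1$ occurs between two nodes where $s-g$ already has opposite weak signs, so no extra sign change is forced; in the example $T_2-g=(3x-2)(x+1)$ and $-T_2-g=x(1-x)$ each have only two zeros, well within the allowed count.

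The repair --- and Demko's actual argument, which the paper cites rather than reproduces --- is that you do not need a bound for all data, only for the one data vector at which $\|A^{-1}\|_\infty$ is attained, and total positivity identifies it. The B-spline collocation matrix at increasing unisolvent points is totally nonnegative, hence $A^{-1}$ has the checkerboard sign pattern $(-1)^{i+j}(A^{-1})_{ij}\ge 0$, and therefore
\begin{equation*}
\|A^{-1}\|_\infty=\max_i\sum_j|(A^{-1})_{ij}|=\max_i\Bigl|\sum_j(A^{-1})_{ij}(-1)^{j+1}\Bigr|=\|A^{-1}\bm d\|_\infty,\qquad d_j=(-1)^{j+1}.
\end{equation*}
For the Demko points, $A^{-1}\bm d=\bm\gamma$ is precisely the B-spline coefficient vector of the Chebyshev spline $s$ satisfying \eqref{eq: equioscillation}--\eqref{eq: max_norm}, and the lower stability inequality you correctly stated in (i) gives $\|\bm\gamma\|_\infty\le\kappa(\dutchcal{B})\,\|s\|_\infty=\kappa(\dutchcal{B})$. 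Your opening paragraph correctly guessed that the alternating data is the worst case, but the justification is total positivity of $A$, not any extremal non-overshoot property of the nodes; without that ingredient the proof does not close.
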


Since $\kappa(\dutchcal{B})$ is independent of the knot sequence (see e.g. \cite{lyche2018spline}), \Cref{th: stable_interpolation} shows that spline interpolation is stable for the Demko points. In fact, Mørken not only proved stability, but also optimality in the sense that $\|A^{-1}\|$ is minimized over all sets of unisolvent points. This result, which is also found in \cite{smith1988knots}, is summarized in the next theorem. For clarity, a subscript is now appended to the collocation matrix for marking the dependency on the interpolation points.

\begin{theorem}
\label{th: optimality}
Let $\{\bm{\tau}_i\}_{i=1}^n$ denote the Demko points. Then, for any other unisolvent set $\{\bm{x}_i\}_{i=1}^n$,
\begin{equation*}
    \|A^{-1}_{\bm{\tau}}\| \leq \|A^{-1}_{\bm{x}}\|.
\end{equation*}
\end{theorem}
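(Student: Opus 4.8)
The plan is to reduce \Cref{th: optimality} to the stability bound \Cref{th: stable_interpolation} by rewriting the matrix norm $\|A_{\bm x}^{-1}\|$ as a \emph{ratio over splines}. The key point is that the interpolation operator is a projection onto $\mathbb{S}$ (recall $Ps=s$ for all $s\in\mathbb{S}$): for any $g=\sum_{k=1}^n c_k B_k \in \mathbb{S}$, feeding the data $\bm{d}=(g(\bm{x}_1),\dots,g(\bm{x}_n))^T$ into the collocation matrix \eqref{eq: def_matrices} returns $A_{\bm x}^{-1}\bm{d}=\bm{c}$, since the interpolant of $g$ at a unisolvent set is $g$ itself. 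Because $A_{\bm x}$ is invertible, the evaluation map $g\mapsto(g(\bm{x}_i))_{i=1}^n$ is a bijection from $\mathbb{S}$ onto $\mathbb{R}^n$, so, with all vector and matrix norms taken to be infinity norms,
\begin{equation*}
    \|A_{\bm x}^{-1}\| = \max_{\bm{d}\neq\bm{0}}\frac{\|A_{\bm x}^{-1}\bm{d}\|}{\|\bm{d}\|} = \max_{g\in\mathbb{S}\setminus\{0\}}\frac{\|\bm{c}\|}{\max_{1\le i\le n}|g(\bm{x}_i)|}.
\end{equation*}

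Next I would bound the discrete maximum in the denominator by the continuous one: since the interpolation points lie in $\Omega$, one has $\max_i|g(\bm{x}_i)|\le\|g\|_\infty$ for every $g\in\mathbb{S}$. Replacing the denominator by this larger quantity can only decrease the ratio, hence for \emph{every} unisolvent set $\{\bm{x}_i\}_{i=1}^n$,
\begin{equation*}
    \|A_{\bm x}^{-1}\| \ge \max_{g\in\mathbb{S}\setminus\{0\}}\frac{\|\bm{c}\|}{\|g\|_\infty} = \kappa(\dutchcal{B}),
\end{equation*}
the last equality being the definition of the condition number of the B-spline basis — note that the complementary bound $\|g\|_\infty\le\|\bm{c}\|$ always holds because the B-splines are nonnegative and form a partition of unity, so $\kappa(\dutchcal{B})$ is precisely this single supremum, and it is intrinsic to $\mathbb{S}$, independent of the interpolation points.

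Finally I would invoke \Cref{th: stable_interpolation}, which supplies the matching upper bound $\|A_{\bm{\tau}}^{-1}\|\le\kappa(\dutchcal{B})$ at the Demko points. Combining the two displays yields $\|A_{\bm{\tau}}^{-1}\|\le\kappa(\dutchcal{B})\le\|A_{\bm x}^{-1}\|$ for any unisolvent $\{\bm{x}_i\}_{i=1}^n$, which is the claim (and, as a byproduct, $\|A_{\bm{\tau}}^{-1}\|=\kappa(\dutchcal{B})$). I expect no serious obstacle in this argument as such: essentially all the difficulty has been deferred to \Cref{th: stable_interpolation}, whose proof rests on the equioscillation property of the Chebyshev spline; finding the right reformulation — trading the discrete seminorm $\max_i|g(\bm{x}_i)|$ for $\|g\|_\infty$ — is the only real idea. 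The one point worth a remark is the multivariate case: there the collocation matrix is a Kronecker product $A=\bigotimes_{k=1}^d A_k$ as in the proof of \Cref{th: schoenberg_whitney}, and since the infinity norm is multiplicative over Kronecker products and the tensor-product B-spline basis has condition number $\prod_{k=1}^d\kappa(\dutchcal{B}_k)$, the one-dimensional estimate factorizes; alternatively the ratio argument above carries over verbatim with multi-indices.
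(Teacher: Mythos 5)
Your argument is correct, and it is essentially the classical proof of this result: the paper itself does not prove \Cref{th: optimality} but defers to the cited references (M{\o}rken's thesis and Smith), and the sandwich you construct --- lower-bounding $\|A_{\bm x}^{-1}\|$ by the basis condition number for \emph{every} unisolvent set via $\max_i|g(\bm x_i)|\le\|g\|_\infty$, then closing the gap with the matching upper bound of \Cref{th: stable_interpolation} at the Demko points --- is precisely the mechanism in those references, including the byproduct $\|A_{\bm\tau}^{-1}\|=\kappa(\dutchcal{B})$. The reformulation of $\|A_{\bm x}^{-1}\|_\infty$ as a supremum of $\|\bm c\|_\infty/\max_i|g(\bm x_i)|$ over $g\in\mathbb{S}$ is valid (unisolvence makes evaluation a bijection), and your observation that the partition-of-unity/nonnegativity of the B-splines collapses the condition number to the single supremum $\sup_g\|\bm c\|_\infty/\|g\|_\infty$ is the right way to make the two bounds meet.

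One point deserves explicit care, since your entire proof leans on it: the sandwich only closes if the $\kappa(\dutchcal{B})$ appearing in \Cref{th: stable_interpolation} is the condition number of the \emph{specific} basis $\dutchcal{B}$ for the given knot vector, i.e.\ $\sup_{\bm c\neq\bm 0}\|\bm c\|_\infty/\|\sum_j c_jB_j\|_\infty$, rather than the uniform constant $\kappa_p=\sup_{\Xi}\kappa(\dutchcal{B}_\Xi)$ that the paper's surrounding remark (``independent of the knot sequence'') might suggest. If one read $\kappa(\dutchcal{B})$ as the knot-independent constant, your two inequalities $\|A_{\bm\tau}^{-1}\|\le\kappa_p$ and $\kappa(\dutchcal{B}_\Xi)\le\|A_{\bm x}^{-1}\|$ would not chain together. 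Demko's theorem is indeed proved with the knot-specific constant (and is sharp in that form), so the argument stands, but you should state which quantity you mean. Your multivariate remark is also fine within the paper's framework, where multivariate interpolation points are tensor products of univariate ones so that $A=\bigotimes_k A_k$ and $\|A^{-1}\|_\infty=\prod_k\|A_k^{-1}\|_\infty$; for non-tensor-product unisolvent sets the claim would require a separate argument, but the paper never considers those.
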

Stability is an important prerequisite for the convergence of an interpolant. Indeed, for a function $f \in C^0(\Omega)$ and spline $s \in \mathbb{S}$,
\begin{equation}
\label{eq: interpolation_bound}
    \|f-Pf\| \leq \|f-s\| + \|P(f-s)\| \leq (1+\|P\|)\|f-s\|.
\end{equation}
Since the previous argument holds for any spline $s \in \mathbb{S}$, having a stable interpolant ensures than $\|f-Pf\|$ behaves (up to a constant) as the best approximation error $\inf_{s \in \mathbb{S}} \|f-s\|$. This result is summarized in the next lemma.

\begin{lemma}
\label{lem: convergence_interpolation}
For a function $f \in C^0(\Omega)$ and a stable spline interpolation operator $P$,
\begin{equation*}
    \|f-Pf\| \leq C_p \inf_{s \in \mathbb{S}} \|f-s\|,
\end{equation*}
where $C_p$ is a constant independent of the knot distribution.    
\end{lemma}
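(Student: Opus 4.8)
The plan is to prove \Cref{lem: convergence_interpolation} by combining the stability of the Demko interpolation operator with the standard telescoping argument already displayed in \eqref{eq: interpolation_bound}. Concretely, I would proceed as follows. First, recall from \Cref{th: stable_interpolation} that for the Demko points $\|P\| \le \|A^{-1}_{\bm\tau}\| \le \kappa(\dutchcal{B})$, and that $\kappa(\dutchcal{B})$ depends only on the spline degree $p$ (and the spatial dimension $d$ through the tensor-product construction), not on the knot vector; this is the key input and is cited from \cite{lyche2018spline}. Setting $C_p := 1 + \kappa(\dutchcal{B})$ then gives, via \eqref{eq: interpolation_bound}, the bound $\|f - Pf\| \le (1+\|P\|)\|f-s\| \le C_p \|f-s\|$ for every $s \in \mathbb{S}$.

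Second, since the right-hand side inequality holds for \emph{every} spline $s \in \mathbb{S}$ and the left-hand side does not depend on $s$, I would take the infimum over $s \in \mathbb{S}$ on the right to obtain
\begin{equation*}
    \|f - Pf\| \le C_p \inf_{s \in \mathbb{S}} \|f - s\|,
\end{equation*}
which is exactly the claimed estimate. The constant $C_p = 1 + \kappa(\dutchcal{B})$ is manifestly independent of the knot distribution, as required. For the multivariate (tensor-product) case one simply notes that the collocation matrix factors as a Kronecker product (as in the proof of \Cref{th: schoenberg_whitney}), so $\|A^{-1}_{\bm\tau}\|$ — and hence $\|P\|$ and $C_p$ — is controlled by a product of the univariate condition numbers, still knot-independent.

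I do not anticipate a genuine obstacle here: the lemma is essentially a restatement of \eqref{eq: interpolation_bound} once stability (\Cref{th: stable_interpolation}) is invoked. The only point requiring mild care is making explicit that the infimum step is legitimate — i.e., that $\inf_{s\in\mathbb{S}}\|f-s\|$ is the right quantity and that $Ps=s$ for $s\in\mathbb{S}$ (which is why $P(f-s)=Pf-s$, making the triangle inequality in \eqref{eq: interpolation_bound} valid in the first place). If one wanted the sharpest constant, \Cref{th: optimality} shows the Demko points minimize $\|A^{-1}\|$ over unisolvent point sets, so $C_p$ is in fact the best achievable via this argument; but this refinement is not needed for the statement as written.
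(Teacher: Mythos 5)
Your proof is correct and is essentially the paper's own argument: the bound in \eqref{eq: interpolation_bound} (valid because $Ps=s$ for $s\in\mathbb{S}$) holds for every $s$, so taking the infimum and invoking stability of $\|P\|$ gives the result. The only cosmetic difference is that you specialize to the Demko points via \Cref{th: stable_interpolation}, whereas the lemma is stated for any stable interpolation operator $P$, for which the same argument yields $C_p = 1+\sup\|P\|$ directly from the stability hypothesis.
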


We will now attempt to connect some of the properties of interpolation to quadrature. Firstly, the definition of the quadrature operator in \eqref{eq: operators} straightforwardly implies that
\begin{equation*}
    |Q(f)| \leq \|\bm{w}\|_1 \|f\|.
\end{equation*}
Moreover, the upper bound is attained for a continuous function $f$ such that $\|f\|=1$ and $f(x_i)=\sign(w_i)$. Hence, $\|Q\| = \|\bm{w}\|_1$ and the latter is a widely accepted definition for the condition number of a quadrature rule \cite{huybrechs2009stable,van2021geometrical}. Obviously, if all weights are positive, $\|Q\|=\sum_{k=1}^n w_k = I(1)$ is uniformly bounded. But if there exists a negative weight, $\|Q\| > I(1)$. Furthermore, from the partition of unity property of the B-spline basis, we deduce that
\begin{equation}
\label{eq: quadrature_condition}
    I(1) = \sum_{i=1}^n w_i \leq \sum_{i=1}^n |w_i| = \|\bm{w}\|_1 = \|A^{-T}\bm{b}\|_1 \leq \|A^{-1}\|I(1),
\end{equation}
thereby yielding lower and upper bounds on the condition number. The stability constant is exactly the same as for the interpolant. Thus, if the interpolant is stable (in the sense that $\|A^{-1}\|$ is uniformly bounded), then so is the quadrature. The following error bound shows why bounded quadrature operators matter. The linearity of integration and quadrature implies that for a function $f \in C^0(\Omega)$ and any spline $s \in \mathbb{S}$,
\begin{equation*}
    |I(f)-Q(f)| \leq |I(f-s)| + |Q(f-s)| \leq (I(1)+\|Q\|)\|f-s\|.
\end{equation*}
This bound is the immediate counterpart of \eqref{eq: interpolation_bound} and implies the following lemma.

\begin{lemma}
\label{lem: convergence_quadrature}
For a function $f \in C^0(\Omega)$ and a stable spline interpolation operator $P$,
\begin{equation*}
    |I(f)-Q(f)| \leq C_p \inf_{s \in \mathbb{S}} \|f-s\|,
\end{equation*}
where $C_p$ is a constant independent of the knot distribution.      
\end{lemma}

We deduce that stable interpolation implies stable quadrature and insures convergence of the associated quadrature rule. However, stable quadrature a priori does not imply positive weights. The fact that negative weights are also encountered for low order Greville interpolation (which is provably stable) confirms that negative weights do not necessarily imply unstable quadrature rules. The inequality in \eqref{eq: quadrature_condition} already hints at that possibility. However, \eqref{eq: quadrature_condition} might not tell the whole story. In particular, even when negative weights occur, the inequality appears excessively pessimistic (see \Cref{se: non_uniform_meshes}). Numerical experimentation and intuition has led to the following conjecture for an improved bound.

\begin{conjecture}
\label{conj: quadrature_condition}
Let $c(\bm{x})$ denote the oscillating spline on a set of unisolvent interpolation points. Then, the associated quadrature weights satisfy
\begin{equation*}
    I(1) \leq \|\bm{w}\|_1 \leq \|c\|I(1).
\end{equation*}
\end{conjecture}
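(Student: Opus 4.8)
The lower bound $I(1) \le \|\bm{w}\|_1$ is already established in \eqref{eq: quadrature_condition} and requires nothing further, so the whole task is the upper bound $\|\bm{w}\|_1 \le \|c\| \, I(1)$. The plan is to exploit the equioscillating spline $c$ as a clever test function in the quadrature identity, in the same spirit that the sign-pattern function was used to show $\|Q\| = \|\bm{w}\|_1$. Recall from \eqref{eq: equioscillation}--\eqref{eq: max_norm} that for the Demko points $c(\bm{x}_i) = (-1)^{i+1}$; the key observation is that plugging $c$ into the quadrature rule gives $Q(c) = \sum_k w_k c(\bm{x}_k) = \sum_k w_k (-1)^{k+1}$. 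This is an \emph{alternating} sum of the weights, not the absolute sum, so by itself it does not directly yield $\|\bm{w}\|_1$. The resolution I would pursue: since the conjecture is stated for a \emph{general} unisolvent set with its own equioscillating spline $c$ (whose extrema are at the interpolation points with alternating signs $\pm 1$, but with $\|c\|$ possibly exceeding $1$ when the points are not the Demko points), combine the exactness $Q(c) = I(c)$ with the bound $|I(c)| \le \|c\|\, I(1)$ and with a lower bound on $|Q(c)| = \left| \sum_k w_k (-1)^{k+1} \right|$ in terms of $\|\bm{w}\|_1$.

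First I would make precise the characterization of $c$: on a unisolvent set $\{\bm{x}_i\}_{i=1}^n$ there is a unique spline $c \in \mathbb{S}$ with $c(\bm{x}_i) = (-1)^{i+1}$ (this is just interpolation of the alternating data, which is well-posed by \Cref{th: schoenberg_whitney}), and $\|c\| \ge 1$ always, with equality precisely for the Demko points. Then $Q(c) = I(c)$ by exactness of the rule on $\mathbb{S}$, so $\sum_{k=1}^n (-1)^{k+1} w_k = I(c)$, hence $\left| \sum_{k=1}^n (-1)^{k+1} w_k \right| \le \|c\|_\infty \, I(1)$. The crux is then to argue that $\|\bm{w}\|_1 \le \left| \sum_{k=1}^n (-1)^{k+1} w_k \right|$, i.e. that the alternating sum of the weights already captures their full $\ell^1$ norm. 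This would follow \textbf{if the signs of the weights alternate}, i.e. $\sign(w_k) = (-1)^{k+1}$ (or uniformly $(-1)^k$) --- examination of \Cref{tab: test_conditions} for the Greville abscissa shows the negative weights indeed appear in an alternating-type pattern near the boundary, which is consistent with this. So the missing structural lemma is: \emph{for an interpolatory spline quadrature rule on a unisolvent set, the quadrature weights alternate in sign in the same pattern as the extremal spline $c$.} Granting that, $\|\bm{w}\|_1 = \sum_k |w_k| = \sum_k (-1)^{k+1} w_k \cdot \sign\!\big((-1)^{k+1} w_k\big)$ collapses to $\left|\sum_k (-1)^{k+1} w_k\right| = |I(c)| \le \|c\| I(1)$, closing the bound.

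The main obstacle is exactly this sign-alternation property of the weights, and I expect it to be genuinely delicate --- which is presumably why the statement is a conjecture rather than a lemma. It does not follow from total positivity of the collocation matrix $A$ alone (total positivity controls signs of minors and of $A^{-1}$ in a checkerboard sense, but $\bm{w} = A^{-T}\bm{b}$ with $\bm{b} > 0$ componentwise, and the checkerboard sign pattern of $A^{-T}$ does not immediately survive multiplication by a positive vector). A more promising route: use the variation-diminishing properties of B-splines together with the fact that $s \mapsto Q(s) - I(s)$ vanishes on all of $\mathbb{S}$, so the functional $f \mapsto Q(f) - I(f)$ annihilates splines and its Peano kernel (a spline of one higher order, or a $B$-spline-like bump) has a definite or sign-regular structure; the weights $w_k$ are then recoverable as integrals/jumps of this kernel against the nodes, and sign regularity of the kernel forces the alternation. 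Alternatively one could try to induct on $n$ or on knot insertion, tracking how a weight can change sign only when a node crosses a knot. If full alternation proves too strong in general, a fallback is to prove the bound only for the Demko points (where $\|c\| = 1$ recovers $\|\bm{w}\|_1 = I(1)$, matching the observed positivity) and for the Greville abscissa, leaving the general unisolvent case as the conjectural part. I would flag the sign-pattern claim explicitly as the sole gap and present the rest of the argument as a conditional proof.
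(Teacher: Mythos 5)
The paper contains no proof of \Cref{conj: quadrature_condition}: it is stated and explicitly left as an open problem (``we have neither found a counter-example nor a complete proof''), so there is no argument of ours to measure yours against. What can be assessed is whether your conditional argument could in principle close the problem, and it cannot: the structural lemma you isolate as the sole remaining gap --- that the quadrature weights alternate in sign, $\sign(w_k)=(-1)^{k+1}$ --- is not merely unproven but false, and it is incompatible with the very conclusion the conjecture is designed to deliver. The remark following the conjecture shows that for the Demko points, where $\|c\|=1$ by \eqref{eq: max_norm}, the conjectured upper bound forces $\sum_k|w_k|=\sum_k w_k$, i.e.\ \emph{all} weights nonnegative; and \Cref{tab: test_conditions} exhibits many configurations (the green cells) in which every Greville weight is positive. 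In any such all-positive case the quantity your test function controls, $\sum_k(-1)^{k+1}w_k=Q(c)=I(c)$, is a heavily cancelling alternating sum, typically of the order of a single weight, while $\|\bm{w}\|_1=\sum_k w_k=I(1)$ is the total mass; the inequality $\|\bm{w}\|_1\le\bigl|\sum_k(-1)^{k+1}w_k\bigr|$ on which your reduction rests then fails by a factor of order $n$. In the cases where the conjecture is nontrivial (some weights negative, e.g.\ the red cells of \Cref{tab: test_conditions}, whose sign pattern is Newton--Cotes-like rather than alternating) the weights still do not alternate, so the conditional proof applies to an essentially empty set of cases. The proposed fallback of ``proving the bound only for the Demko points'' is circular, since for the Demko points the conjecture \emph{is} the open positivity statement.

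What is correct in your write-up is the lower bound (which indeed is already \eqref{eq: quadrature_condition}) and the true, but much weaker, consequence $\bigl|\sum_k(-1)^{k+1}w_k\bigr|=|I(c)|\le\|c\|\,I(1)$ obtained from exactness of $Q$ on $\mathbb{S}$. The structural issue is that $c$ probes only the alternating-sum functional of $\bm{w}$, whereas $\|\bm{w}\|_1$ is probed by data $f(\bm{x}_k)=\sign(w_k)$, a sign vector that need not alternate. The route faithful to the conjecture is the one implicit in \eqref{eq: quadrature_condition}: with $P$ the interpolation operator and $f$ that sign vector, $\|\bm{w}\|_1=Q(f)=Q(Pf)=I(Pf)\le\|Pf\|\,I(1)$, and the conjecture amounts to the claim that among all $\pm1$ data vectors the interpolant of $\sign(w_k)$ has sup-norm at most that of the interpolant of the alternating data, namely $\|c\|$. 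This is where total positivity genuinely enters: the checkerboard sign pattern of $A^{-1}$ shows that the alternating data maximizes the $\ell^\infty$ norm of the \emph{coefficient} vector, but converting that into a sup-norm statement about the spline costs the B-spline condition number --- exactly the factor separating \eqref{eq: quadrature_condition} from the conjectured bound. Your Peano-kernel and knot-insertion suggestions are reasonable directions for attacking that question, but as written the conditional proof reduces the conjecture to a statement that is false rather than merely open.
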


\begin{remark}
\Cref{conj: quadrature_condition} would improve the upper bound of \eqref{eq: quadrature_condition} and this is all that is needed for proving that the Demko weights are nonnegative. Indeed, for the Demko points, the oscillating (Chebyshev) spline satisfies $\|c\|=1$ (see \eqref{eq: max_norm}). Hence, if \Cref{conj: quadrature_condition} holds, $\sum_{i=1}^n |w_i| = \|\bm{w}\|_1 = I(1) = \sum_{i=1}^n w_i$, and all weights are necessarily nonnegative.   
\end{remark}

Unfortunately, despite multiple attempts, we have neither found a counter-example nor a complete proof for \Cref{conj: quadrature_condition}. Thus, it is left as an open problem.

\subsection{Truncated interpolatory spline basis}
\label{se: truncation}
Another important issue is related to the support of interpolatory basis functions. Their coefficients in the B-spline basis are stored along the columns of $C$ and since the latter is completely dense, the interpolatory basis functions are globally supported. This fact alone precludes explicitly forming $K_\dutchcal{L}$, although $C$ itself may be implicitly computed and stored by exploiting the Kronecker structure of $A$. Even if $K_\dutchcal{L}$ could be formed, we would have merely transferred the burden of solving linear systems with $M_\dutchcal{B}$ to computing matrix-vector products with $K_\dutchcal{L}$, which might end up being just as expensive \cite{radtke2024analysis}. This issue seems to have been overlooked in \cite{li2022significance,li2024interpolatory}. Fortunately, there is a simple workaround. Although $C$ is completely dense, its off-diagonal entries generally decay exponentially fast away from the diagonal. In the case of interior Greville points on a uniform mesh this directly follows from \cite[Theorem~2]{Schoenberg1972}. More generally, this property has been extensively studied in the linear algebra and spline communities alike \cite{demko1977inverses,demko1984decay,benzi1999bounds,canuto2014decay,benzi2015decay} and naturally suggests approximating $C$ by a sparse matrix $\bar{C}$, thereby truncating the interpolatory functions. Constructing $\bar{C}$ simply consists in discarding the entries of $C$ whose magnitude drops below a certain threshold $\epsilon$ (e.g. $10^{-14}$). In higher dimensions, truncating univariate bases ensures that $\bar{C}$ retains a Kronecker product structure, which is crucial for computational efficiency, as we will later discuss. \Cref{fig: 1D_Lagrange_Demko_spline_global_compact_p3_n25_s14} compares one of the original interpolatory basis functions to its truncated version for a cubic spline discretization on a uniform mesh of $25$ elements and a truncation tolerance of $10^{-4}$. The globally and compactly supported functions are visually undistinguishable. Since the truncation error might impede on the convergence, the truncation tolerance must naturally match the desired accuracy. Provided $\bar{C}$ is invertible, the truncated set of functions $\bar{\dutchcal{L}} = \{\bar{L}_1,\dots,\bar{L}_n\}$ still forms a basis for $\mathbb{S}$ and the stability of the B-spline basis immediately shows that
\begin{equation*}
    \|L_i - \bar{L}_i\| \leq \epsilon
\end{equation*}
for univariate spaces. In the multivariate case, for a multi-index $\bm{i}=(i_1,\dots,i_d)$, the estimate becomes
\begin{equation*}
    \|L_{\bm{i}} - \bar{L}_{\bm{i}}\| \lesssim \epsilon \sum_{k=1}^d \|L_{i_k}\|
\end{equation*}
up to high order terms in $\epsilon$. Although the truncation error might get amplified in the multivariate case, the amplification factor is only slightly larger than $1$ in all practically relevant cases. A truncation level of $\epsilon=10^{-14}$ typically does not impede on the convergence and restores some sparsity in the stiffness matrix. It is still inevitably denser than for the B-spline basis but a similar issue arises for alternative techniques based on approximate dual functions \cite{nguyen2023towards,held2024efficient}. We will later explain how to leverage the Kronecker structure of $\bar{C}$ to further mitigate the issue.

\begin{figure}[H]
    \centering
    \includegraphics[scale=0.5]{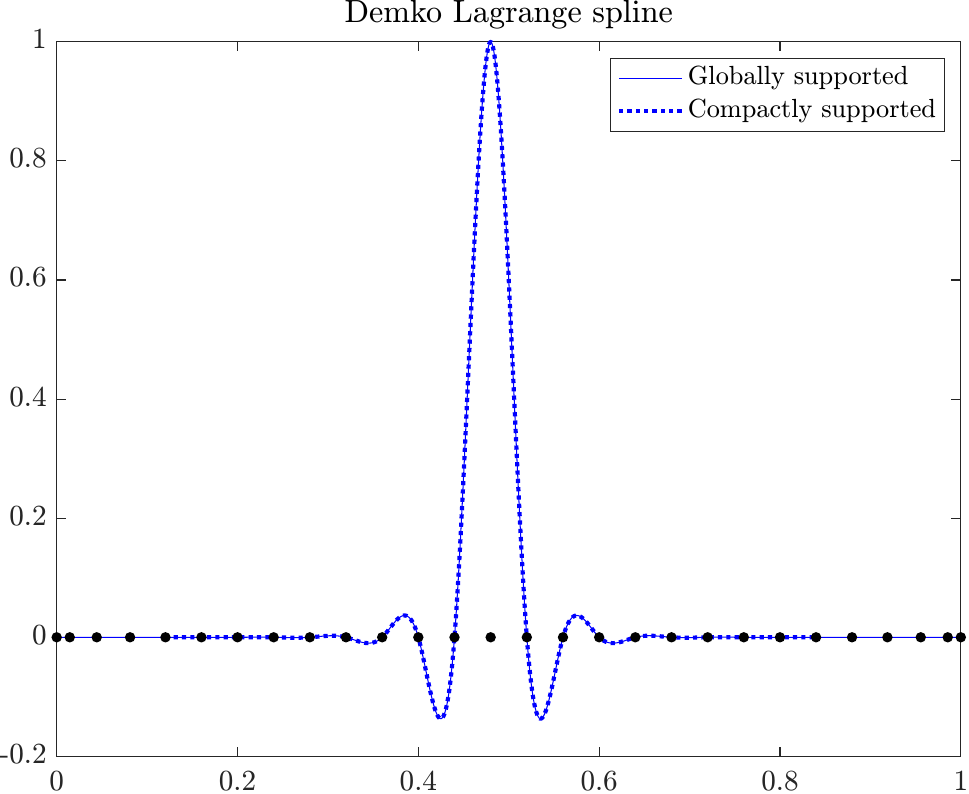}
    \caption{Original and truncated Lagrange cubic spline function for the Demko points on a uniform mesh of $25$ elements and a truncation tolerance of $10^{-4}$}
    \label{fig: 1D_Lagrange_Demko_spline_global_compact_p3_n25_s14}
\end{figure}

\subsection{Critical time step}
\label{se: CFL_condition}
Finally, lumping the mass matrix for an interpolatory basis has yet another issue in addition to those already mentioned: it may deteriorate the CFL condition. As mentioned in the introduction, mass lumping not only seeks a diagonal approximation of the consistent mass but also an improvement of the critical time step. As shown in \cite[Corollary 3.10]{voet2023mathematical}, this property is guaranteed for the row-sum technique and nonnegative mass matrices. Unfortunately, for interpolatory bases, the mass matrix usually features negative entries and the previous result no longer holds. Indeed, as we have seen, lumping positive definite matrices may result in indefinite ones. Strict diagonal dominance is sufficient for guaranteeing positive definite lumped mass matrices but the mass matrix rarely satisfies this condition for high order discretizations. Moreover, the critical time step may still decrease even if the lumped mass is positive definite. Nevertheless, if 
\begin{equation}
\label{eq: sufficient_cond}
    Q(s^2) \geq I(s^2) \quad \forall s \in \mathbb{S},
\end{equation}
then the lumped mass matrix is positive definite and the critical time step cannot decrease. Indeed, if \eqref{eq: sufficient_cond} holds, then
\begin{equation*}
    \widehat{b}(s,s) = Q(s^2) \geq I(s^2) = b(s,s) > 0 \quad \forall s \in \mathbb{S},
\end{equation*}
which is equivalent to $\widehat{M}_{\Phi} \succeq M_{\Phi} \succ 0$ for any basis $\Phi$ of $\mathbb{S}$ and in particular for the Lagrange spline basis $\dutchcal{L}$. Combining this result with \cite[Corollary 3.6]{voet2023mathematical} shows that
\begin{equation*}
\lambda_k(K_{\Phi},\widehat{M}_{\Phi}) \leq \lambda_k(K_{\Phi}, M_{\Phi}) \quad k=1,\dots,n.
\end{equation*}
Unfortunately, neither the Greville nor the Demko quadrature rules satisfy \eqref{eq: sufficient_cond} and although the condition is only sufficient, our experiments confirmed that the critical time step may decrease. In fact, the existence of a spline quadrature rule satisfying \eqref{eq: sufficient_cond} is uncertain at this stage, although the condition emerges quite naturally from the proof of \Cref{lem: spectral_fem} (see \Cref{se: proof_CFL_SEM}). Provided all weights are positive, the next section explains how to practically solve the semi-discrete problem for a generic set of interpolation points.

\subsection{Algorithm}
\label{se: algorithm}
First recall that the semi-discrete problem in \eqref{eq: semi_discrete_pb} stems from a specific choice of basis. Once a basis $\Phi$ is chosen, we find the coefficient vector of the solution relative to the basis $\Phi$, denoted $\bm{u}_\Phi(t)$, by (approximately) solving the system of ordinary differential equations
\begin{equation}
\label{eq: semi_discrete_pb_phi}
M_\Phi\ddot{\bm{u}}_\Phi(t) + K_\Phi\bm{u}_\Phi(t) = \bm{f}_\Phi(t) \qquad \text{for } t \in [0,T],
\end{equation}
with appropriate initial conditions. Instead of commonly choosing the B-spline basis $\Phi=\dutchcal{B}$ in \eqref{eq: semi_discrete_pb_phi}, we choose the Lagrange spline basis $\Phi=\dutchcal{L}$ and lump the mass matrix in the interpolatory basis. Note that forming $M_{\dutchcal{L}}$ is unnecessary before lumping it. Indeed, \Cref{lem: LML_QML} shows that $\widetilde{M}_\dutchcal{L}=\widehat{M}_{\dutchcal{L}}=\diag(\bm{w})$. Thus, we directly compute the weights by solving the moment-fitting equation \eqref{eq: moment_fitting} and solve linear systems with $\widehat{M}_{\dutchcal{L}}$ by simply rescaling the components of the right-hand side vector. Also note that the lumped mass matrix is always formed before taking care of any Dirichlet boundary conditions in \eqref{eq: semi_discrete_pb_phi}. Once \eqref{eq: semi_discrete_pb_phi} has been solved, the solution in the B-spline basis can be recovered in a single back-transformation step. A complete pseudo-code is provided in \Cref{algo: time_stepping} for pure Neumann boundary conditions. Adapting the algorithm to Dirichlet or mixed boundary conditions merely requires extracting submatrices from the global lumped mass and stiffness matrices.

\begin{algorithm}[H]
\begin{algorithmic}[1]
\caption{Time stepping with the Lagrange spline basis}
\label{algo: time_stepping}
\Statex \textbf{Input}: Set of interpolation points $\{\bm{x}_k\}_{k=1}^n$, initial conditions $\bm{u}_0$ and $\bm{v}_0$ in the B-spline basis.
\State Compute the collocation matrix $A$ in \eqref{eq: def_matrices} and its inverse $C$.
\State Compute the quadrature weights $\bm{w}=A^{-T}\bm{b}$ from \eqref{eq: moment_fitting}.
\State Form the lumped mass $\widehat{M}_{\dutchcal{L}}=\diag(\bm{w})$.
\State Solve (approximately) the system $\widehat{M}_{\dutchcal{L}}\ddot{\bm{u}}_{\dutchcal{L}}(t) + K_{\dutchcal{L}}\bm{u}_{\dutchcal{L}}(t) = \bm{f}_{\dutchcal{L}}(t)$ 
\Statex with initial conditions $\bm{u}_{\dutchcal{L}}(0) = A \bm{u}_0$ and $\dot{\bm{u}}_{\dutchcal{L}}(0)=A\bm{v}_0$.
\State Recover the approximate solution in the B-spline basis $\bm{u}_{\dutchcal{B}}(t) = C\bm{u}_{\dutchcal{L}}(t)$.
\end{algorithmic}
\end{algorithm}

At this stage, we would like to mention some important differences of \Cref{algo: time_stepping} with dual mass lumping approaches \cite{nguyen2023towards,held2024efficient,hiemstra2025higher}. Although the lumped mass matrix is diagonal in both cases, the cost of computing matrix-vector multiplications with the stiffness for the evaluation of the right-hand side may significantly differ. Dual mass lumping achieves approximate duality in the physical domain by rescaling the approximate dual functions by $c(\hat{\bm{x}}) = \rho(F(\hat{\bm{x}})) |\det(J(\hat{\bm{x}}))|$ where $F \colon \hat{\Omega} \to \Omega$ is the mapping from the parametric to the physical domain and $J$ denotes its Jacobian matrix. Unfortunately, due to the derivatives appearing in the stiffness, the rescaling doubles or even quadruples the cost of computing matrix-vector multiplications with the stiffness \cite{nguyen2023towards,hiemstra2025higher} and the authors admit that the overall saving is still unclear. Also bear in mind that the stiffness itself is non-symmetric. Although this might not have any computational drawbacks, it remains an unexpected outcome for what originally was a standard Galerkin method. 

In contrast, the stiffness matrix $K_{\dutchcal{L}} = C^T K_\dutchcal{B}C$ for interpolatory spline bases remains symmetric positive semidefinite and the change of basis is just a separate add-on. Thus, one can reuse the same algorithm for computing matrix-vector multiplications with the stiffness in the B-spline basis and simply compose it with the change of basis. We still have to discuss how to perform this composition efficiently. The first idea, explained in \Cref{se: truncation}, is to truncate the interpolatory basis, which amounts to substituting $C$ with a sparse approximation $\bar{C}$. Truncating univariate bases ensures that $\bar{C}$ retains a Kronecker product structure; i.e.
\begin{equation*}
    \bar{C} = \bigotimes_{i=1}^d \bar{C}_i.
\end{equation*}
Although forming the stiffness in the truncated basis $K_{\bar{\dutchcal{L}}}$ is possible, we would advise against it due to considerable fill-in. Indeed, assuming that $K_\dutchcal{B}$ and $\bar{C}$ are block-banded of bandwidths $\bm{b}=(b_1,\dots,b_d)$ and $\bm{q}=(q_1,\dots,q_d)$, respectively, then $K_{\bar{\dutchcal{L}}} = \bar{C}^T K_\dutchcal{B}\bar{C}$ is also block-banded of bandwidth $\bm{b}+2\bm{q}$ (see e.g. \cite{voet2025mass}). Thus, if $K_{\bar{\dutchcal{L}}}$ is formed explicitly, a single matrix-vector multiplication will scale as $O(n \prod_{i=1}^d (b_i + 2q_i))$, where $n=\dim(\mathbb{S})$. If matrix-vector multiplications with $K_{\bar{\dutchcal{L}}}$ are instead performed sequentially from right to left by computing one matrix-vector multiplication at a time, the cost essentially reduces to 
\begin{equation*}
    O\left(n( \prod_{i=1}^d b_i + 2\sum_{i=1}^dq_i)\right).
\end{equation*}
This strategy not only avoids the fill-in arising when explicitly forming $K_{\bar{\dutchcal{L}}}$ but also exploits the Kronecker product structure of $\bar{C}$ when computing matrix-vector multiplications with it (or its transpose). The Kronecker product is a well-known device for breaking the ``curse of dimensionality'' and further savings occur if $K_\dutchcal{B}$ itself is assembled in low (Kronecker) rank format (see e.g. \cite{mantzaflaris2017low,scholz2018partial,hofreither2018black}) or not assembled at all. In such cases, the cost of $O(n\prod_{i=1}^d b_i)$ is substituted with the cost of a matrix-vector multiplication with $K_\dutchcal{B}$.

Another possibility consists in directly exploiting the Kronecker structure of $A = \bigotimes_{i=1}^d A_i$ and might produce similar savings. Since $A_i$ has a band structure with at most $p_i+1$ consecutive nonzero entries in each row \cite{lyche2018spline}, its bandwidth never exceeds $p_i$ and the cost of computing matrix-vector multiplications with $A^{-T}K_\dutchcal{B}A^{-1}$ sequentially amounts to
\begin{equation*}
    O\left(n(\prod_{i=1}^d b_i + 2 \sum_{i=1}^d p_i^2)\right).
\end{equation*}
The best strategy depends on how $q_i$ compares to $p_i^2$. However, the comparison is unclear since $q_i$ (the bandwidth of $\bar{C}_i$) depends on the choice of interpolation points and truncation tolerance. Even if $q_i$ is a linear function of $p_i$, the hidden constants in the big $O$ notation will play a role for small to moderate values of $p_i$. Moreover, systems with small bandwidths can be solved more efficiently at a cost that is linear in $p_i$ \cite{quarteroni2010numerical}. Thus, from a cost perspective, the two strategies seem equally viable for common values of $p_i$. However, the second one does not require truncating the basis. Those considerations are truly independent of the way the stiffness is treated (i.e. explicitly or implicitly through matrix-vector products).

Finally, thanks to the interpolatory nature of the basis functions, imposing essential boundary conditions is just as simple as standard finite elements. This is in contrast to the (approximate) dual bases approaches, which require some extra work to impose such conditions \cite{hiemstra2025higher, held2024efficient}.

\section{Numerical experiments}
\label{se: numerical_experiments}
This section collects a few experiments to confirm our results but especially to draw additional insights on the intriguing behavior of spline quadrature and mass lumping. The experiments are repeated for two common choices of interpolation points, namely the Greville and Demko points, although negative weights are occasionally experienced for the former. The Lagrange spline bases for those two sets are exemplarily shown in \Cref{fig: spline_bases} alongside the standard B-spline basis for a maximally smooth cubic spline space with $7$ subdivisions. The Lagrange spline basis functions are indeed interpolatory: $L_j(x_i)=\delta_{ij}$, where $\{x_i\}_{i=1}^n$ are the Greville or Demko points, identified as black dots in \Cref{fig: 1D_Lagrange_Greville_basis_p3_ne7,fig: 1D_Lagrange_Demko_basis_p3_ne7}, respectively. However, similarly to the polynomial Lagrange basis, nothing prevents $|L_j(x)|$ from exceeding $1$ between the interpolation points. This phenomenon is also somewhat connected to the stability properties of the basis and, as expected, was always more pronounced for the Greville points. Moreover, although the basis functions are globally supported, for sufficiently fine meshes, they are often very well approximated by compactly supported functions, as explained in \Cref{se: truncation}. 

\begin{figure}[H]
     \centering
     \begin{subfigure}[t]{0.31\textwidth}
    \centering
    \includegraphics[width=\textwidth]{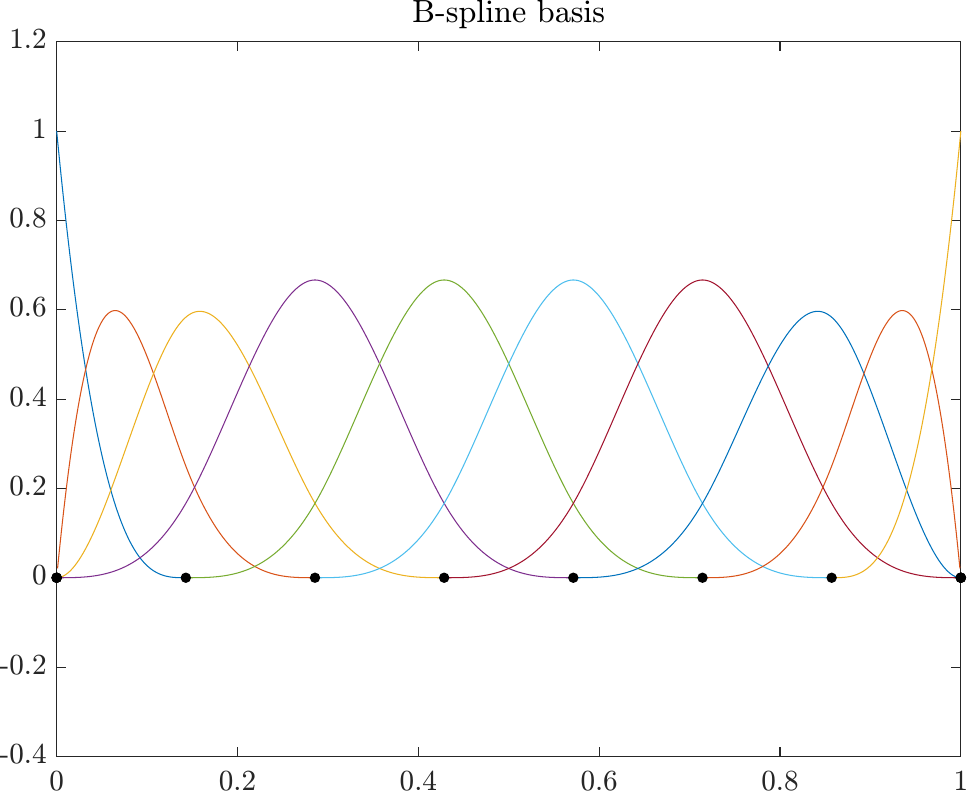}
    \caption{B-spline basis}
    \label{fig: 1D_Bspline_basis_p3_ne7}
     \end{subfigure}
     \hfill
     \begin{subfigure}[t]{0.31\textwidth}
    \centering
    \includegraphics[width=\textwidth]{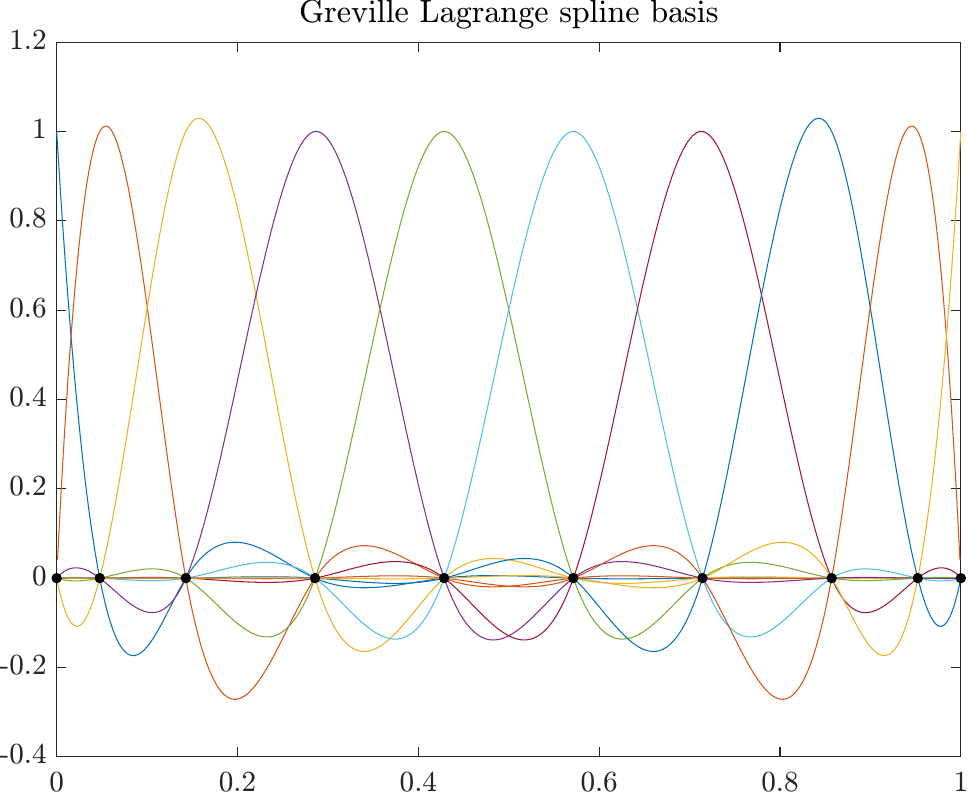}
    \caption{Greville Lagrange spline basis}
    \label{fig: 1D_Lagrange_Greville_basis_p3_ne7}
     \end{subfigure}
     \hfill
    \begin{subfigure}[t]{0.31\textwidth}
    \centering
    \includegraphics[width=\textwidth]{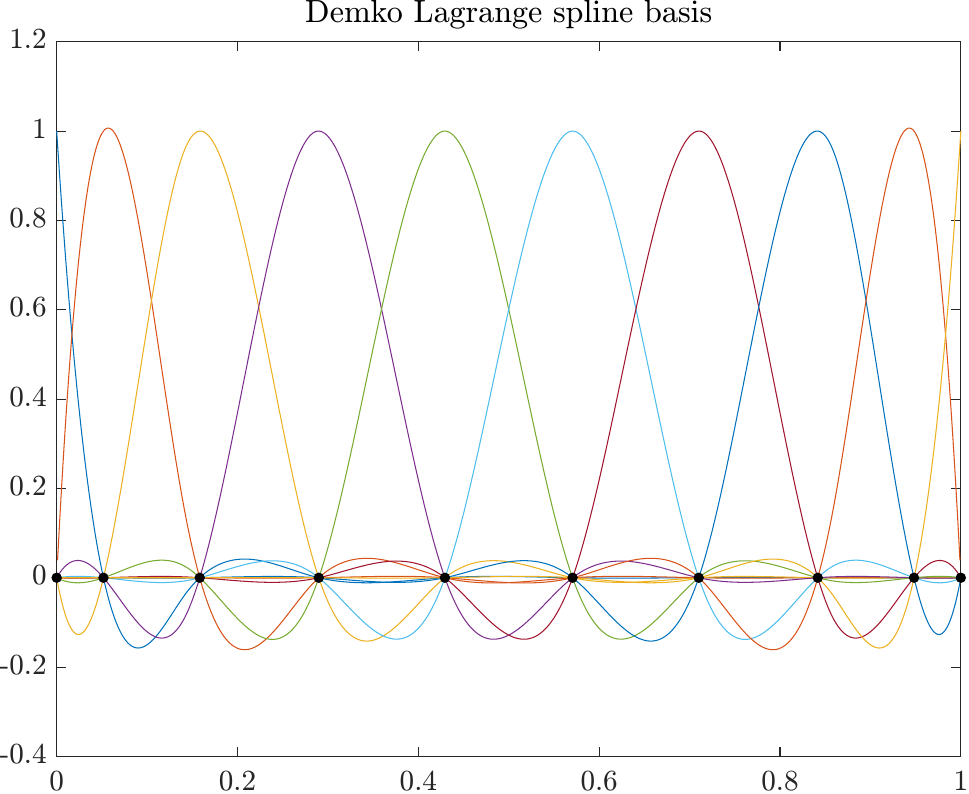}
    \caption{Demko Lagrange spline basis}
    \label{fig: 1D_Lagrange_Demko_basis_p3_ne7}
     \end{subfigure}
     \hfill
    \caption{Different spline bases. Black dots mark knot locations and interpolation points for the B-spline and Lagrange spline bases, respectively.}
    \label{fig: spline_bases}
\end{figure}

The properties of the Greville and Demko points are now closely examined on a series of examples.

\begin{example}[1D - Interpolation and quadrature error]
Our first example studies the convergence of the interpolation and quadrature operators for the smooth Runge function $f \colon [0,1] \to \mathbb{R}$ defined as
\begin{equation*}
    f(x) = \frac{1}{1+25x^2}
\end{equation*}
whose integral is $I(f)=\frac{1}{5}\arctan(5)$. The convergence of the interpolation error $\|f-Pf\|$ is shown in \Cref{fig: 1D_regular_mesh_interp_error} for the Greville and Demko interpolants of spline degrees $p$ ranging from $1$ to $6$ on uniformly refined meshes of mesh size $h$. For moderate spline degrees on uniform meshes, Greville interpolation is expected to remain stable and indeed, the error decays as $O(h^{p+1})$ in both cases, perfectly aligning with \Cref{lem: convergence_interpolation}. While this is expected to always hold for the Demko points, it might eventually break down for the Greville points on graded meshes with high degrees \cite{jia1988spline}. We indeed found numerical evidence of instabilities in the interpolant for degree $p \geq 45$. Furthermore, neither interpolants converge for a non-smooth function, as expected from \Cref{lem: convergence_interpolation}.

\begin{figure}[H]
     \centering
     \begin{subfigure}[t]{0.48\textwidth}
    \centering
    \includegraphics[width=\textwidth]{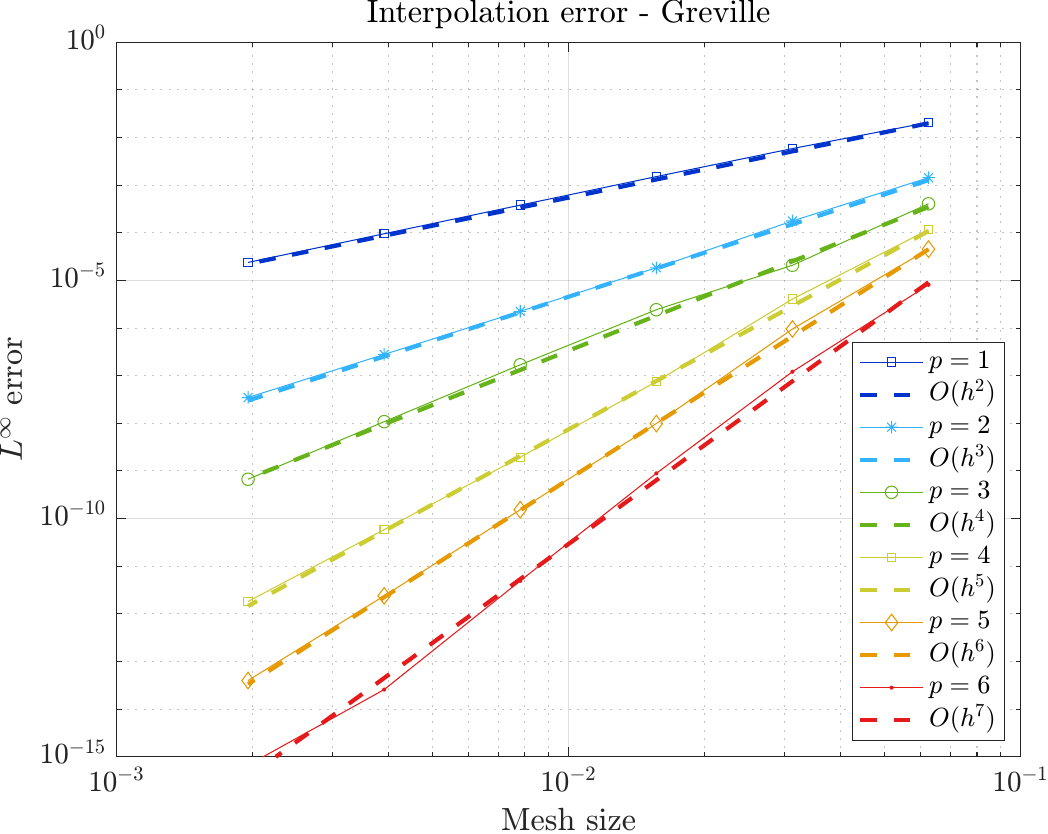}
    \caption{Greville points}
    \label{fig: 1D_regular_mesh_interp_error_Greville}
     \end{subfigure}
     \hfill
     \begin{subfigure}[t]{0.48\textwidth}
    \centering
    \includegraphics[width=\textwidth]{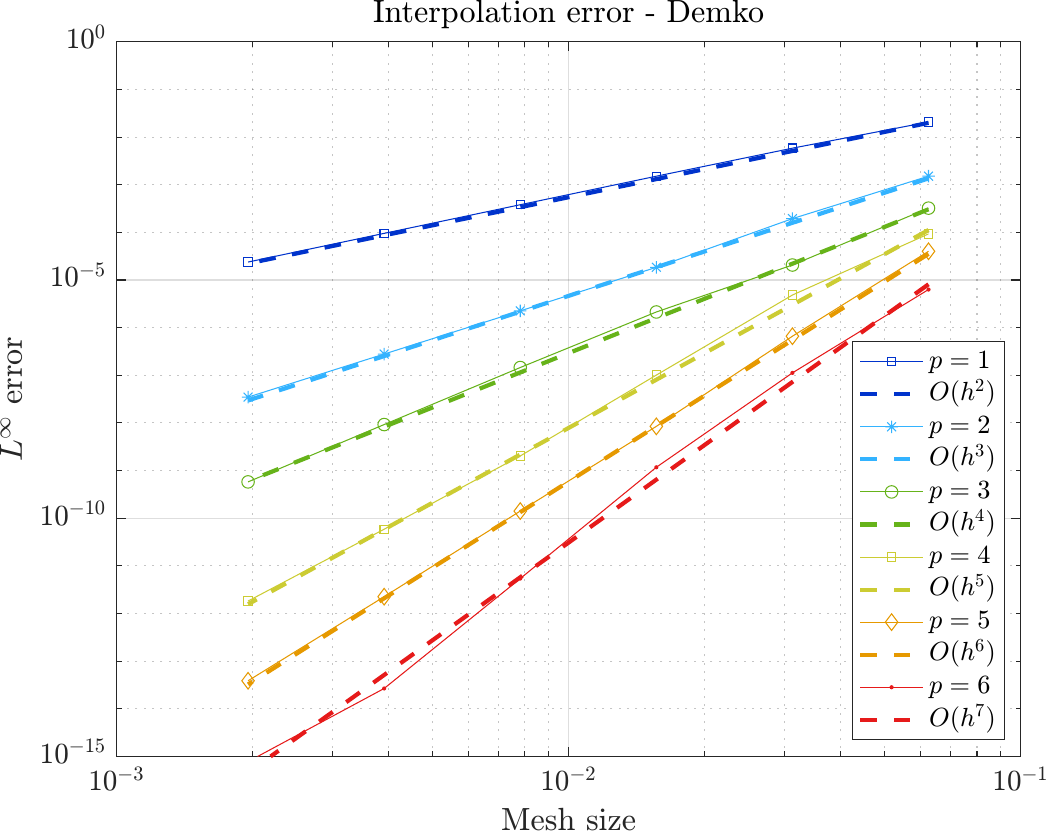}
    \caption{Demko points}
    \label{fig: 1D_regular_mesh_interp_error_Demko}
     \end{subfigure}
     \hfill
    \caption{Interpolation error $\|f-Pf\|$}
    \label{fig: 1D_regular_mesh_interp_error}
\end{figure}

What is more surprising is the quadrature error $|I(f)-Q(f)|$ shown in \Cref{fig: 1D_regular_mesh_quad_error}, which actually decays much faster than one would expect from \Cref{lem: convergence_quadrature}. In this figure and all following ones, the trendline tries to capture the most sensitive convergence pattern. In this case, the convergence rate follows a pattern reminiscent of collocation methods \cite{auricchio2010isogeometric,evans2018explicit}. We observe super-convergence for certain degrees and the expected additional order of convergence for splines of odd degree.

\begin{figure}[H]
     \centering
     \begin{subfigure}[t]{0.48\textwidth}
    \centering
    \includegraphics[width=\textwidth]{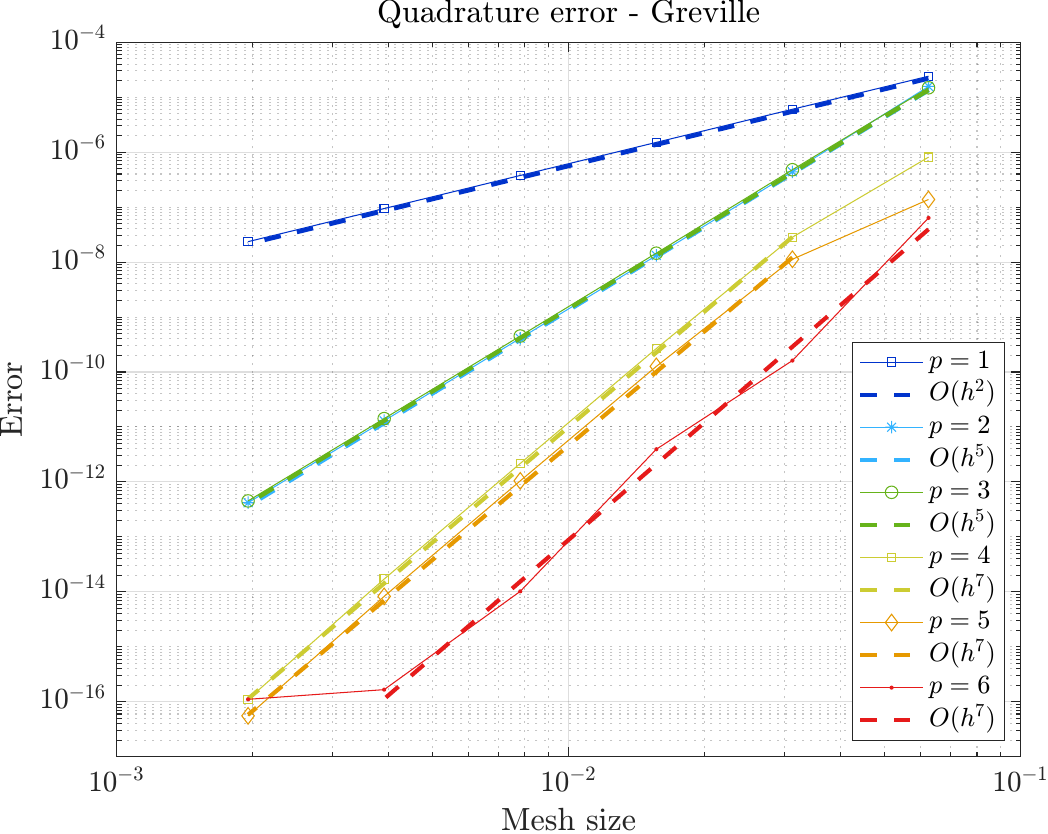}
    \caption{Greville points}
    \label{fig: 1D_regular_mesh_quad_error_Greville}
     \end{subfigure}
     \hfill
     \begin{subfigure}[t]{0.48\textwidth}
    \centering
    \includegraphics[width=\textwidth]{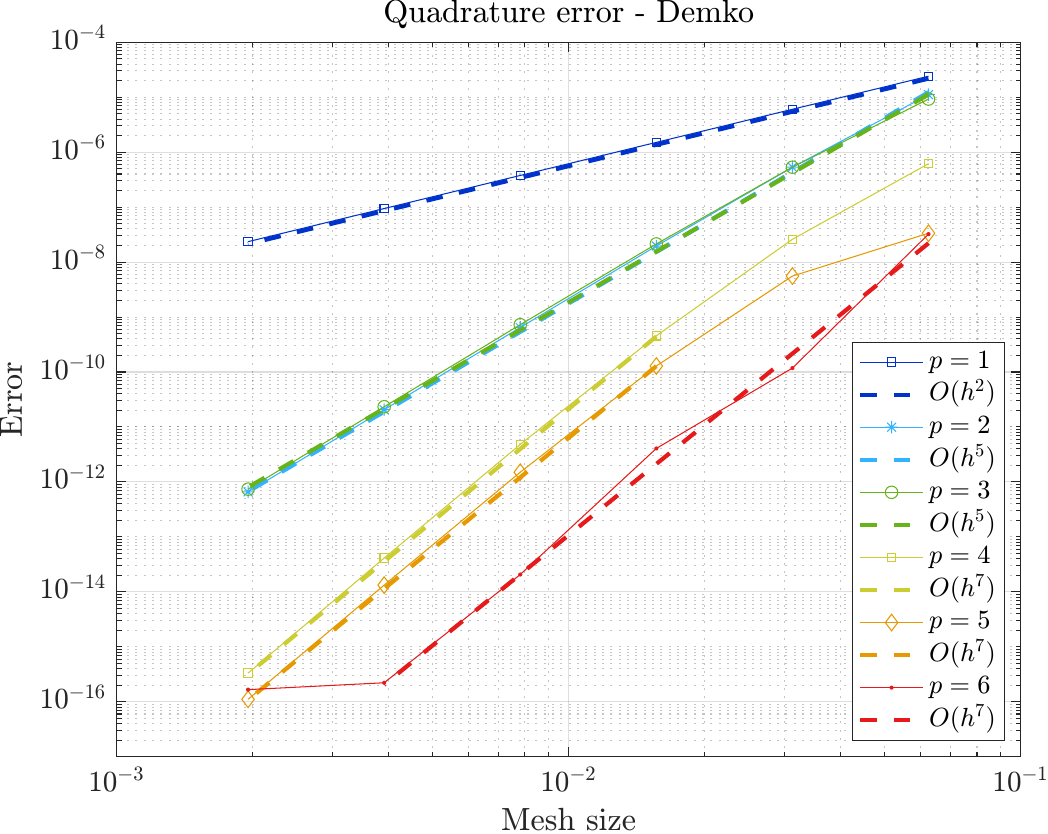}
    \caption{Demko points}
    \label{fig: 1D_regular_mesh_quad_error_Demko}
     \end{subfigure}
     \hfill
    \caption{Quadrature error $|I(f)-Q(f)|$}
    \label{fig: 1D_regular_mesh_quad_error}
\end{figure}
\end{example}

\begin{example}[1D - Eigenvalue error]
\label{ex: 1D_eig_convergence}
We now turn to the eigenvalues and eigenvectors of $(K_\dutchcal{B},\widehat{M}_\dutchcal{B})$, which are critical for the accuracy of dynamical simulations with lumped mass matrices. The convergence of the eigenvalues has already been extensively studied in \cite{li2022significance,li2024interpolatory} for the Greville points on different geometries with Dirichlet, Neumann or mixed boundary conditions and spline degrees ranging from $1$ to $5$. The conclusions of those studies were that the convergence rate of the smallest eigenfrequencies depended not only on the degree but also on the boundary conditions. For Neumann boundary conditions, the convergence rate was $2$ for degree $1$, $4$ for degrees $2$ and $3$ and $6$ for degrees $4$ and $5$. However, for Dirichlet or mixed boundary conditions, the convergence rate dropped to $5$ for degrees $4$ and $5$. Those patterns were systematically observed for 1D and 2D problems in \cite{li2022significance,li2024interpolatory}. The purpose of our experiment is firstly to confirm this trend, secondly to determine whether it also holds for the Demko points and thirdly to examine the convergence of the corresponding eigenfunctions. \Cref{fig: 1D_Laplace_eig4} shows the convergence rate for the relative error 
\begin{equation*}
    \left|\frac{\omega_4-\omega_{h,4}}{\omega_4}\right|
\end{equation*}
in the $4$th eigenfrequency for the Laplace eigenvalue problem on the unit line with homogeneous Dirichlet boundary conditions and spline degrees ranging from $1$ to $5$. In this simple academic setting, the eigenvalues and eigenfunctions are known in closed form (see e.g. \cite{manni2022application}). The $4$th frequency converges more slowly than the fundamental frequency, which allows analyzing the convergence over a broader range of mesh sizes. For a consistent mass approximation (\Cref{fig: 1D_Laplace_consistent_dirichlet_eig4}), the error converges at the expected rate of $2p$. Lumping the mass matrix in the B-spline basis reduces the convergence rate to $2$, independently of the spline degree (\Cref{fig: 1D_Laplace_ML_Bspline_dirichlet_eig4}). This trend has been consistently observed in several independent studies \cite{anitescu2019isogeometric,nguyen2023towards}, also for generalizations of the row-sum technique \cite{voet2023mathematical,voet2025mass}. In contrast, lumping the mass matrix for the Lagrange spline basis improves the convergence rate. For the Greville points, \Cref{fig: 1D_Laplace_ML_Greville_dirichlet_eig4} perfectly matches the findings in \cite{li2022significance,li2024interpolatory}. In \Cref{fig: 1D_Laplace_ML_Demko_dirichlet_eig4}, the convergence rate for the Demko points is almost the same, except that it increases to $6$ for degrees $4$ and $5$, even for Dirichlet boundary conditions. Similar results were obtained on the unit square. Thus, the convergence rate for the Demko points is apparently insensitive to the boundary conditions, providing an edge over the Greville points. However, the convergence of the corresponding eigenfunctions was not investigated in earlier work. The convergence of the error
\begin{equation*}
    \|u_4-u_{h,4}\|_{L^2}
\end{equation*}
in the $4$th (normalized) eigenfunction is shown in \Cref{fig: 1D_Laplace_eigf4}. As expected, the rate of convergence is $p+1$ for the consistent mass (\Cref{fig: 1D_Laplace_consistent_dirichlet_eigf4}) and drops to $2$ for the lumped mass in the B-spline basis (\Cref{fig: 1D_Laplace_ML_Bspline_dirichlet_eigf4}). Surprisingly, although it initially improves for the Greville points (\Cref{fig: 1D_Laplace_ML_Greville_dirichlet_eigf4}) and the Demko points (\Cref{fig: 1D_Laplace_ML_Demko_dirichlet_eigf4}), it apparently stalls at about $3.5$ in both cases. For pure Neumann boundary conditions, it does not increase beyond $4.5$ (for the range of degrees tested). This finding is a serious concern, as it may impede on the convergence of an initial boundary value problem. Furthermore, although the two sets seem nearly as good in this experiment, we have found numerous occurrences of negative weights for the Greville points on non-uniform meshes (see \Cref{se: non_uniform_meshes}) and the lumping strategy then leads to an unstable scheme. In contrast, we have never experienced negative weights for the Demko points, even for the most absurd knot vectors we could imagine. Some of our attempts are listed in \Cref{se: non_uniform_meshes}. The reason might be concealed in \Cref{conj: quadrature_condition}. That being said, the convergence rate for the Demko points remains sub-optimal with respect to the consistent mass (\Cref{fig: 1D_Laplace_consistent_dirichlet_eig4}).

\begin{figure}[H]
     \centering
     \begin{subfigure}[t]{0.48\textwidth}
    \centering
    \includegraphics[width=\textwidth]{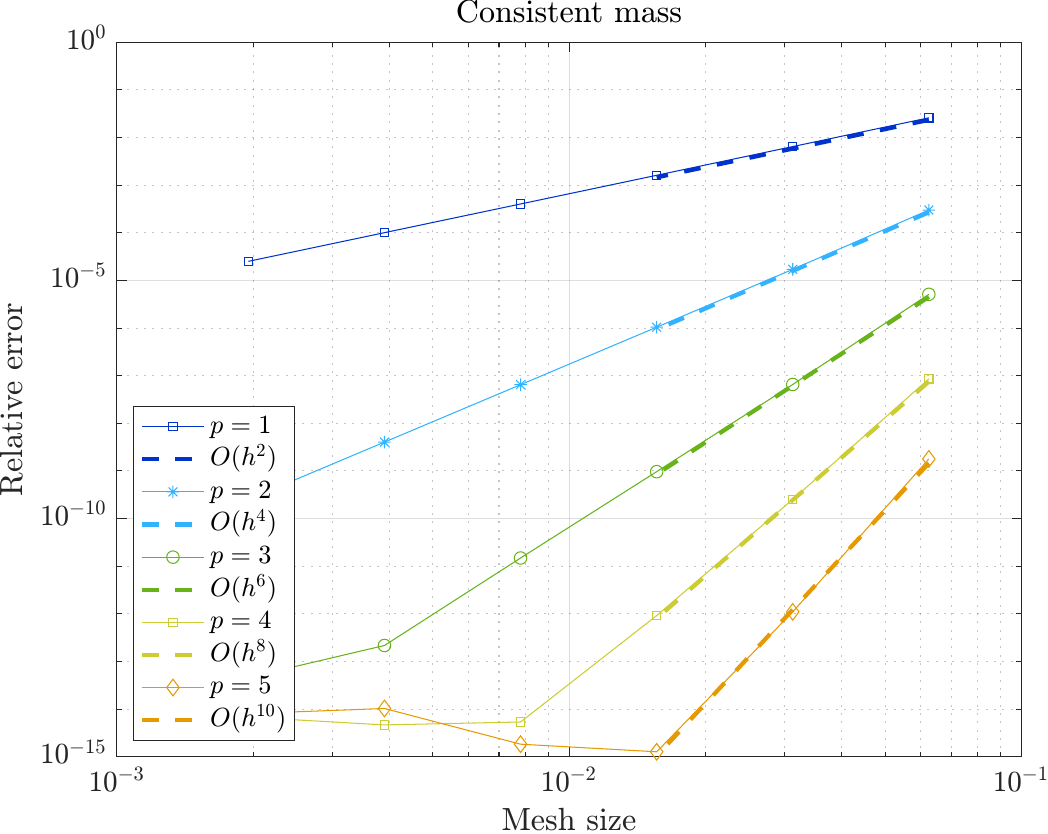}
    \caption{Consistent mass}
    \label{fig: 1D_Laplace_consistent_dirichlet_eig4}
     \end{subfigure}
     \hfill
     \begin{subfigure}[t]{0.48\textwidth}
    \centering
    \includegraphics[width=\textwidth]{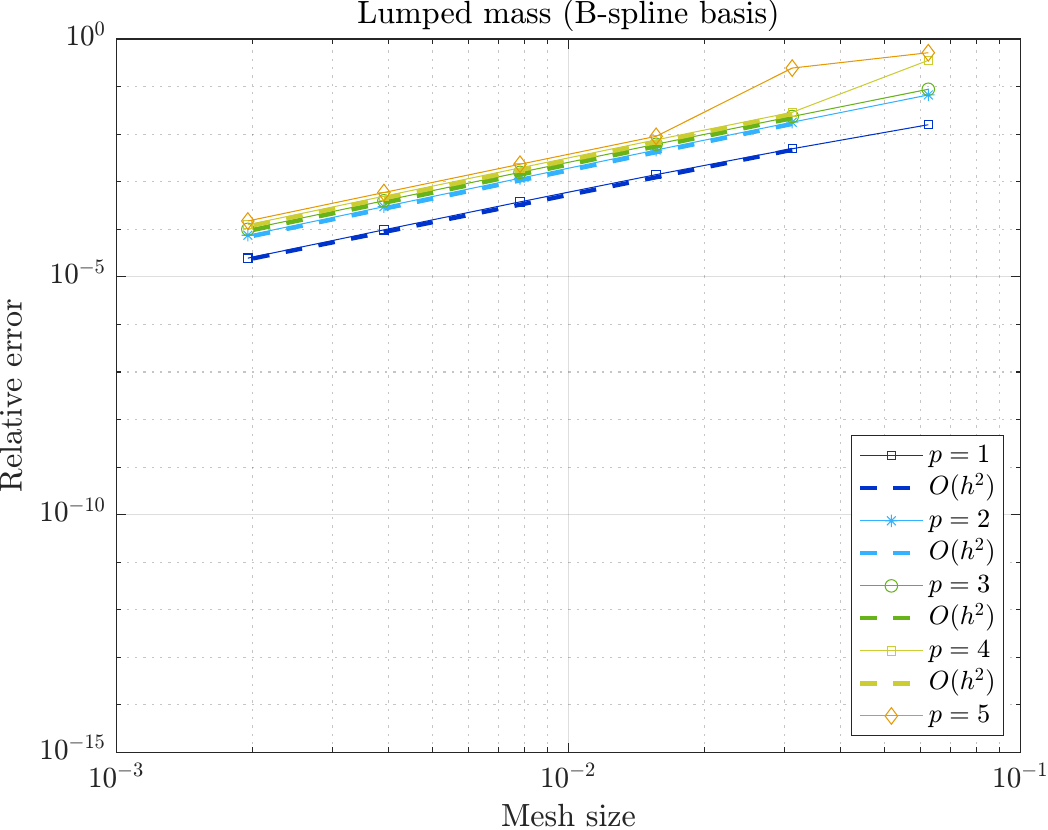}
    \caption{Lumped mass (B-spline basis)}
    \label{fig: 1D_Laplace_ML_Bspline_dirichlet_eig4}
     \end{subfigure}
     \hfill
     \vspace{5pt}
    \begin{subfigure}[t]{0.48\textwidth}
    \centering
    \includegraphics[width=\textwidth]{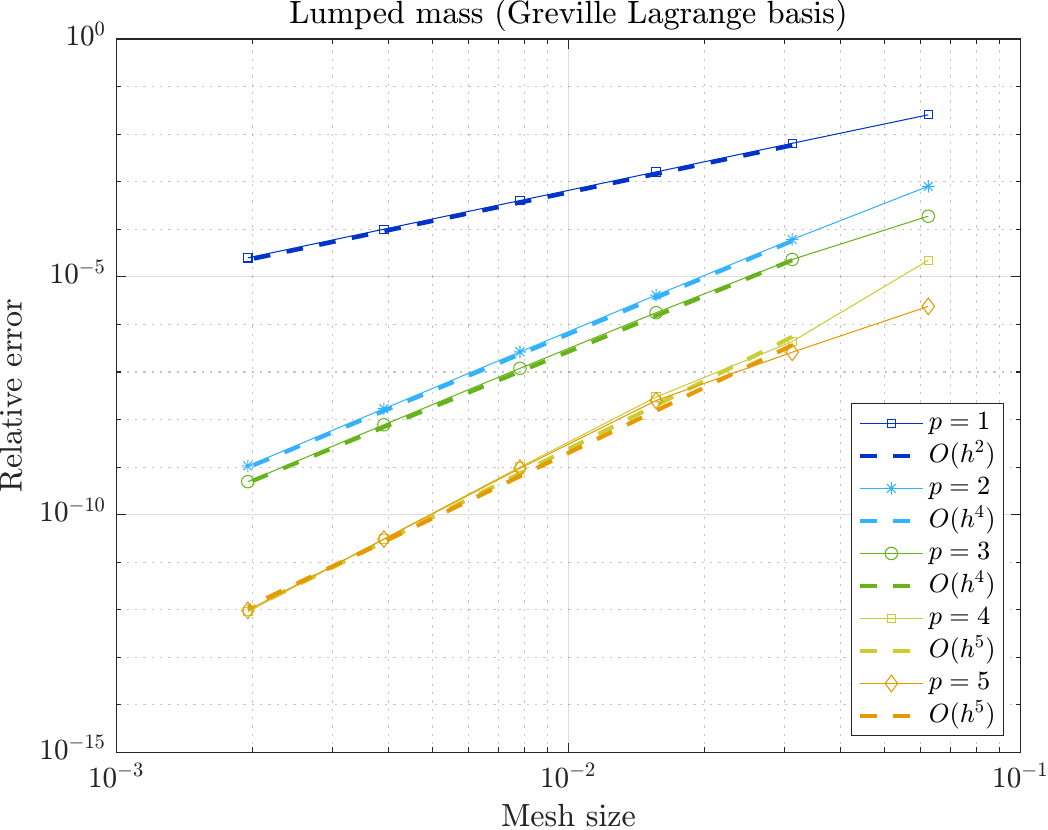}
    \caption{Lumped mass (Greville Lagrange spline basis)}
    \label{fig: 1D_Laplace_ML_Greville_dirichlet_eig4}
     \end{subfigure}
     \hfill
    \begin{subfigure}[t]{0.48\textwidth}
    \centering
    \includegraphics[width=\textwidth]{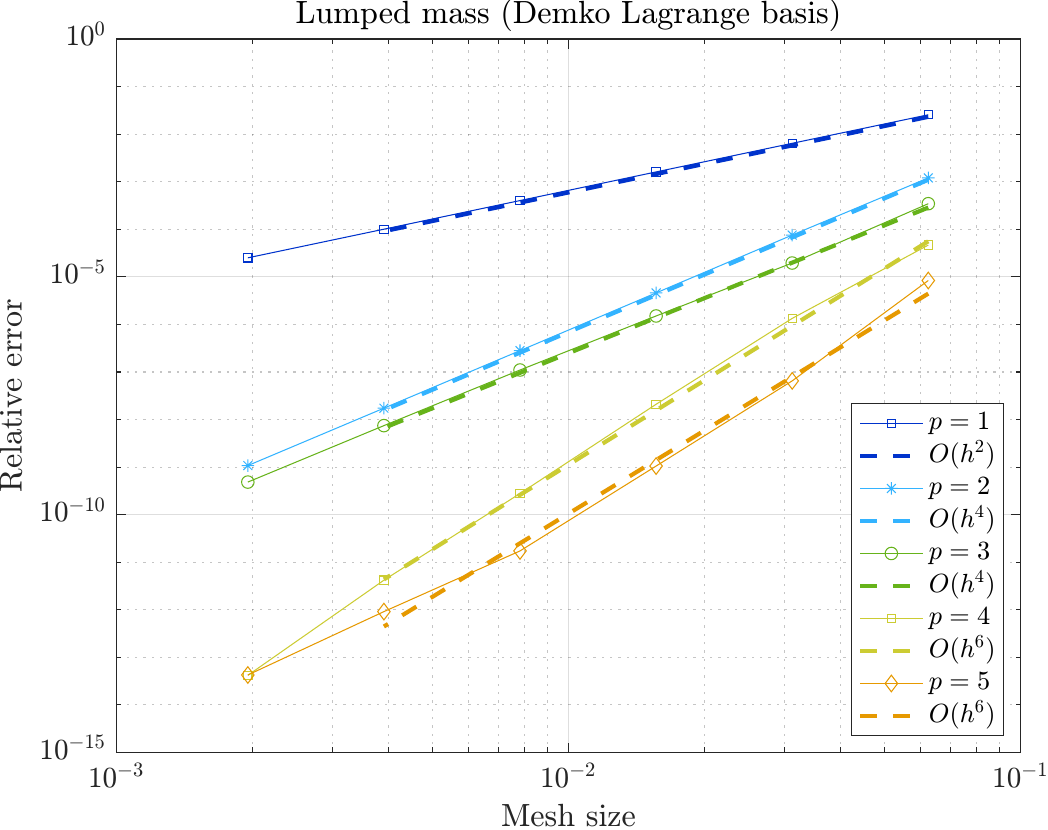}
    \caption{Lumped mass (Demko Lagrange spline basis)}
    \label{fig: 1D_Laplace_ML_Demko_dirichlet_eig4}
     \end{subfigure}
     \hfill
    \caption{Relative eigenfrequency error for the Laplace eigenvalue problem on the unit line with homogeneous Dirichlet boundary conditions}
    \label{fig: 1D_Laplace_eig4}
\end{figure}

\begin{figure}[H]
     \centering
     \begin{subfigure}[t]{0.48\textwidth}
    \centering
    \includegraphics[width=\textwidth]{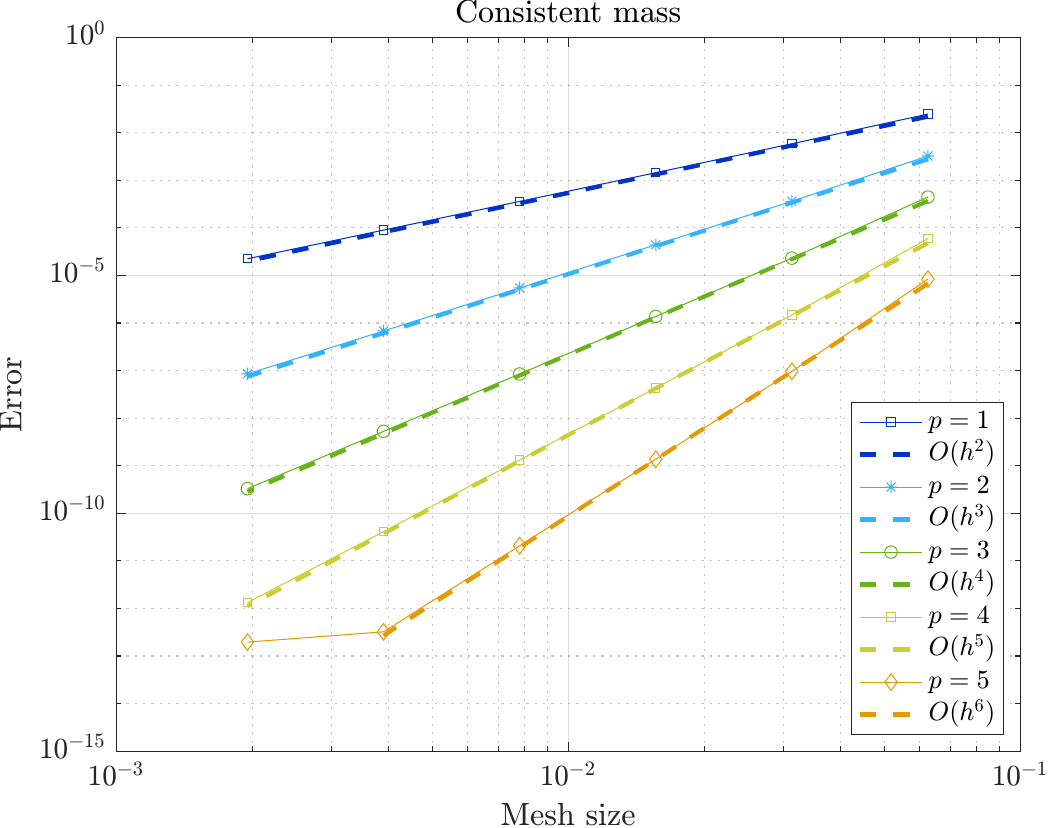}
    \caption{Consistent mass}
    \label{fig: 1D_Laplace_consistent_dirichlet_eigf4}
     \end{subfigure}
     \hfill
     \begin{subfigure}[t]{0.48\textwidth}
    \centering
    \includegraphics[width=\textwidth]{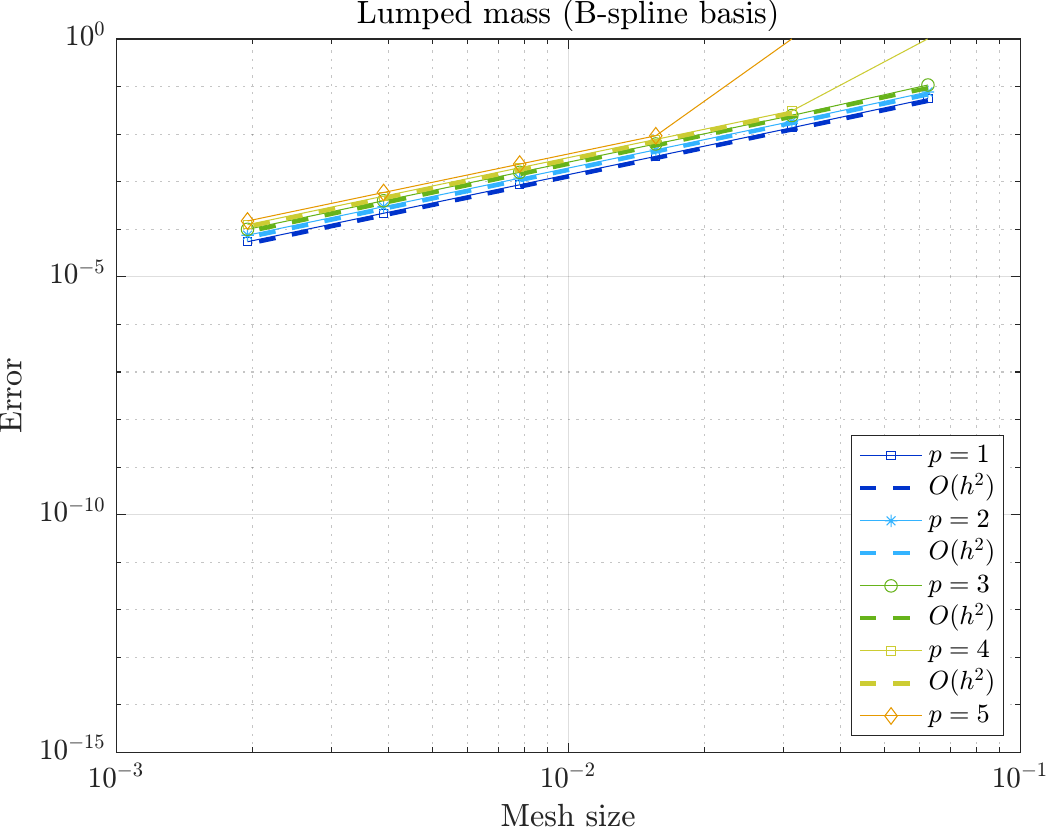}
    \caption{Lumped mass (B-spline basis)}
    \label{fig: 1D_Laplace_ML_Bspline_dirichlet_eigf4}
     \end{subfigure}
     \hfill
     \vspace{5pt}
    \begin{subfigure}[t]{0.48\textwidth}
    \centering
    \includegraphics[width=\textwidth]{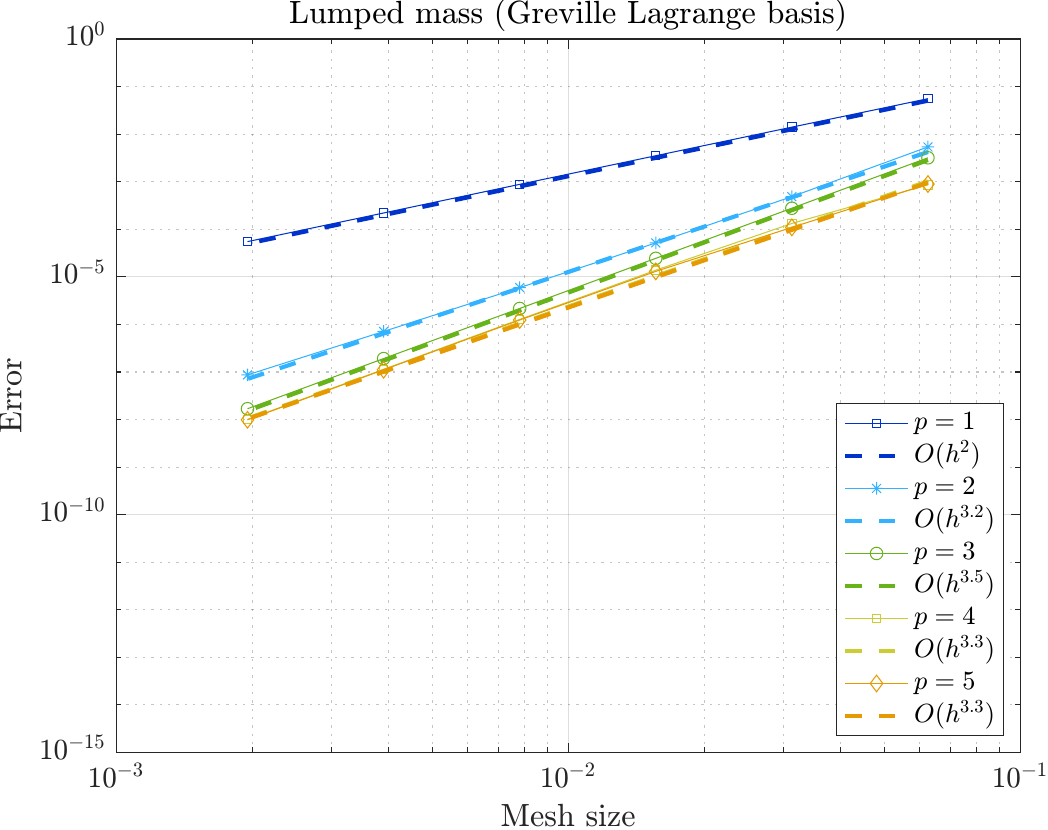}
    \caption{Lumped mass (Greville Lagrange spline basis)}
    \label{fig: 1D_Laplace_ML_Greville_dirichlet_eigf4}
     \end{subfigure}
     \hfill
    \begin{subfigure}[t]{0.48\textwidth}
    \centering
    \includegraphics[width=\textwidth]{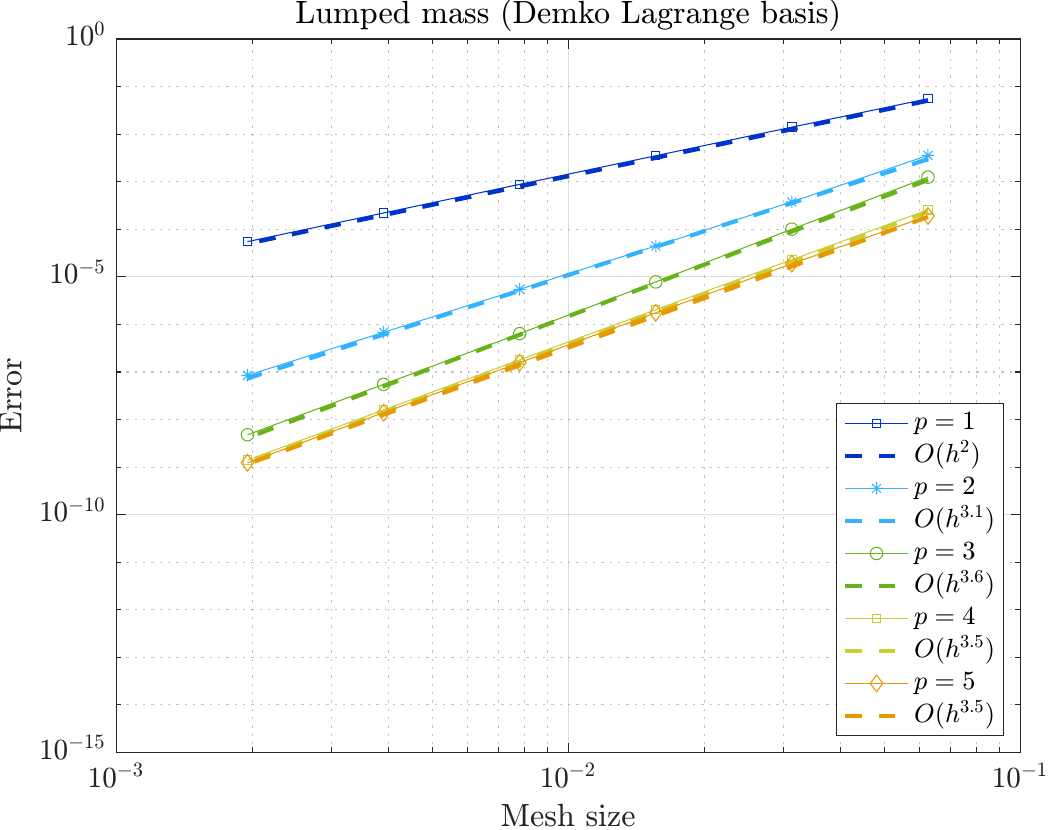}
    \caption{Lumped mass (Demko Lagrange spline basis)}
    \label{fig: 1D_Laplace_ML_Demko_dirichlet_eigf4}
     \end{subfigure}
     \hfill
    \caption{Eigenfunction error for the Laplace eigenvalue problem on the unit line with homogeneous Dirichlet boundary conditions}
    \label{fig: 1D_Laplace_eigf4}
\end{figure}
\end{example}

\begin{example}[1D - Dynamics]
\label{ex: 1D_dynamics}
The improved accuracy of mass lumping for interpolatory spline bases should logically translate into improved accuracy for an initial boundary value problem. For exploring the properties of the method, we consider two distinct manufactured solutions $u_i(x,t)=w_i(x)\sin(\omega t)$ for $i=1,2$, which only differ in their spatial part given by
\begin{equation*}
    w_1(x)= \sin(\omega x) \quad \text{and} \quad w_2(x)=x(1-x)\mathrm{e}^{-\left(\frac{x-x_c}{\sigma}\right)^2}
\end{equation*}
with parameter values $\sigma=0.1$, $x_c=0.5$ and $\omega = 3\pi$. The functions $w_1(x)$ and $w_2(x)$ are shown in \Cref{fig: 1D_Laplace_dynamics_func}.

\begin{figure}[H]
     \centering
     \begin{subfigure}[t]{0.48\textwidth}
    \centering
    \includegraphics[width=\textwidth]{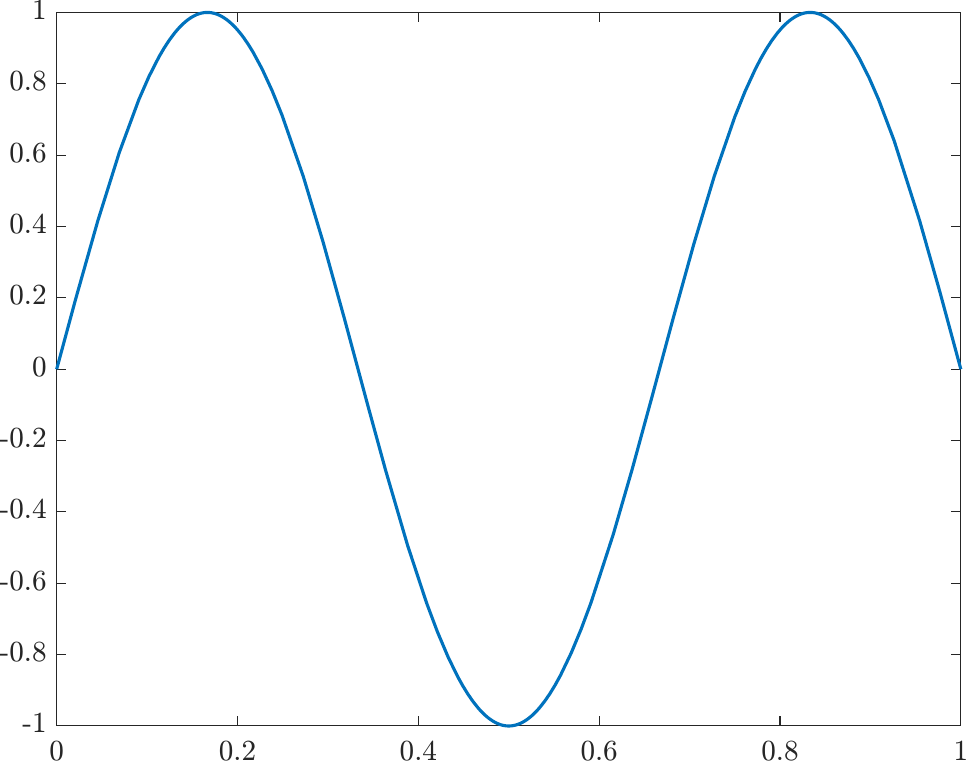}
    \caption{Function $w_1(x)$}
    \label{fig: 1D_Laplace_dynamics_easy_func}
     \end{subfigure}
     \hfill
     \begin{subfigure}[t]{0.48\textwidth}
    \centering
    \includegraphics[width=\textwidth]{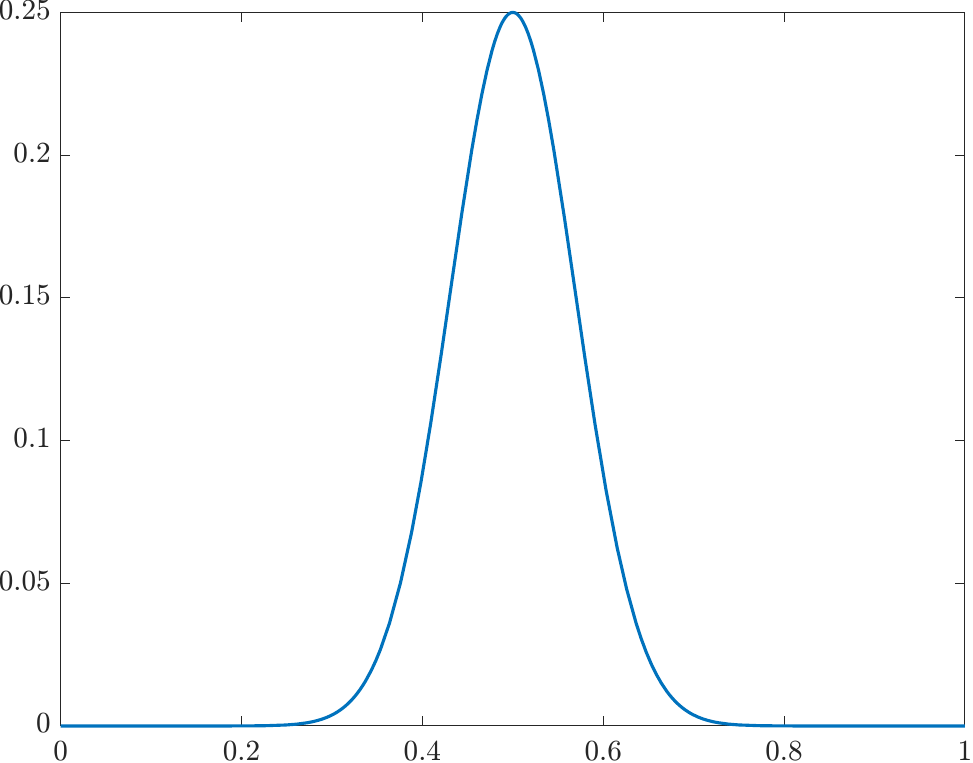}
    \caption{Function $w_2(x)$}
    \label{fig: 1D_Laplace_dynamics_diff_func}
     \end{subfigure}
     \hfill
    \caption{Spatial part of the manufactured solutions for \Cref{ex: 1D_dynamics}}
    \label{fig: 1D_Laplace_dynamics_func}
\end{figure}

We now solve the standard wave equation on the unit line with unit material coefficients, homogeneous Dirichlet boundary conditions and where the right-hand side and initial conditions are obtained from the manufactured solutions. For the spatial discretization, spline spaces of degree $1$ to $5$ are built on increasingly fine meshes. In general, in order to balance the spatial and temporal discretization errors, high order spatial discretizations must be coupled with high order temporal ones, otherwise the temporal discretization error will drive the convergence rate. In principle, one could also use low order methods by simply adapting the step size depending on the spatial accuracy. However, this technique soon becomes computationally intractable due to the exceedingly small step sizes and high order explicit methods offer a better alternative. Such techniques will be investigated in future examples but for the time being, it is safer to analyze the spatial discretization error without any interference from the time discretization. Fortunately, for the manufactured solutions we have chosen, the exact solution of the semi-discrete problem \eqref{eq: semi_discrete_pb} has a closed form solution (see e.g. \cite{voet2025stabilization}). Hence, we analyze the convergence of the exact semi-discrete solution $u_h(x,t)$ by computing the relative $L^2$ error at discrete times
\begin{equation*}
    \frac{\|u(t_j)-u_h(t_j)\|_{L^2}}{\|u(t_j)\|_{L^2}},
\end{equation*}
where $t_j = j \Delta t$. The convergence of the relative error at the final time $T=1.5$ is examined for both manufactured solutions for a consistent mass (\Cref{fig: 1D_Laplace_dynamics_consistent}) and a lumped mass approximation, either directly in the B-spline basis (\Cref{fig: 1D_Laplace_dynamics_ML_Bspline}) or in the interpolatory spline basis based on the Greville points (\Cref{fig: 1D_Laplace_dynamics_ML_Greville}) or the Demko points (\Cref{fig: 1D_Laplace_dynamics_ML_Demko}). For the consistent mass, the error converges at the optimal rate, independently of the solution. For the lumped mass in the B-spline basis, the convergence rate drops to $2$ in both cases. However, when lumping the mass matrix in the interpolatory spline basis, the convergence rate is apparently solution-dependent. Similarly to the eigenfunctions, the convergence rate for $u_1$ initially improves but eventually stalls at about $3.5$. Choosing instead $w_1(x)=\cos(\omega x)$ and imposing Neumann boundary conditions increases the convergence rate to about $4.5$. However, the convergence rate for $u_2$ is surprisingly optimal and matches the consistent mass. This difference is certainly not related to a problem in the treatment of the boundary conditions: the numerical solutions are exactly zero at the Dirichlet boundary as they should be. Instead, an intuitive explanation lies in the behavior of the solution near the boundaries and, more specifically, in the number of vanishing derivatives. By construction, both solutions vanish at the boundary. However, while the first derivative of $w_1$ is nonzero, $w_2$ has infinitely many (near) zero derivatives at the boundaries and the discretization error in those regions is damped out. As a matter of fact, improved rates of convergence are also witnessed for $u_1$ if the $L^2$ error is only computed over an ``interior subdomain'' away from the boundaries. For instance, \Cref{fig: 1D_Laplace_dynamics_ML_easy_func_sub} shows the relative $L^2$ error over the $[0.1, 0.9]$ sub-interval. The convergence rates for the consistent and lumped mass in the B-spline basis remain unchanged and the corresponding figures are omitted. We do not have a clear theoretical explanation for this phenomenon but it is certainly related to the Greville and Demko points near the boundary. A boundary correction scheme similar to what was done in \cite[Section~5]{manni2022application} for different boundary conditions might recover optimal convergence rates for \emph{all} smooth solutions, but this is a topic for future work. We note that a similar increase of the convergence rate with the number of zero derivatives of the true solution at the boundary was proven to occur in the numerical quadrature solution to the integral equation studied in \cite{Bressan2020}.

Furthermore, although this example did not require any time discretization scheme, we still compared the CFL conditions. Unfortunately, the time integration of the lumped mass solution for the Greville Lagrange spline basis would occasionally require more time steps than for the consistent mass. We later encountered the same issue for the Demko Lagrange spline basis on a different example. Thus, lumping the mass matrix for the Lagrange spline basis may deteriorate the CFL condition, even for very simple academic examples.

\begin{figure}[H]
     \centering
     \begin{subfigure}[t]{0.48\textwidth}
    \centering
    \includegraphics[width=\textwidth]{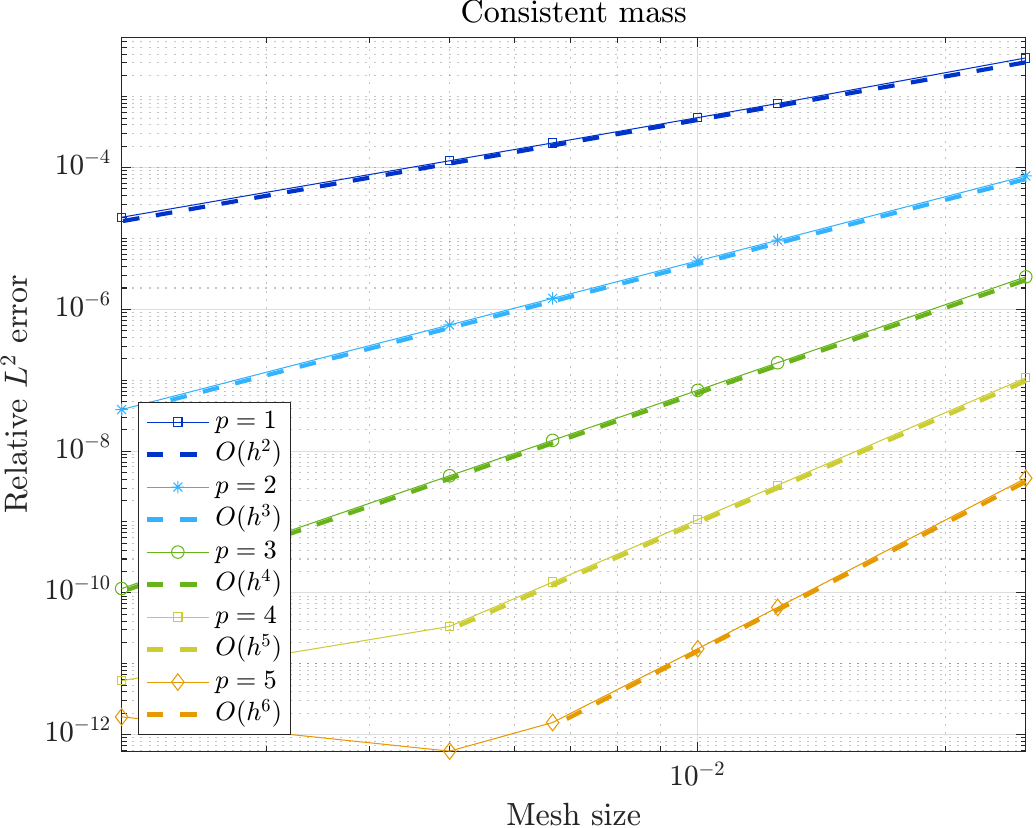}
    \caption{Error for $u_1(x,t)$}
    \label{fig: 1D_Laplace_dynamics_consistent_easy_func}
     \end{subfigure}
     \hfill
     \begin{subfigure}[t]{0.48\textwidth}
    \centering
    \includegraphics[width=\textwidth]{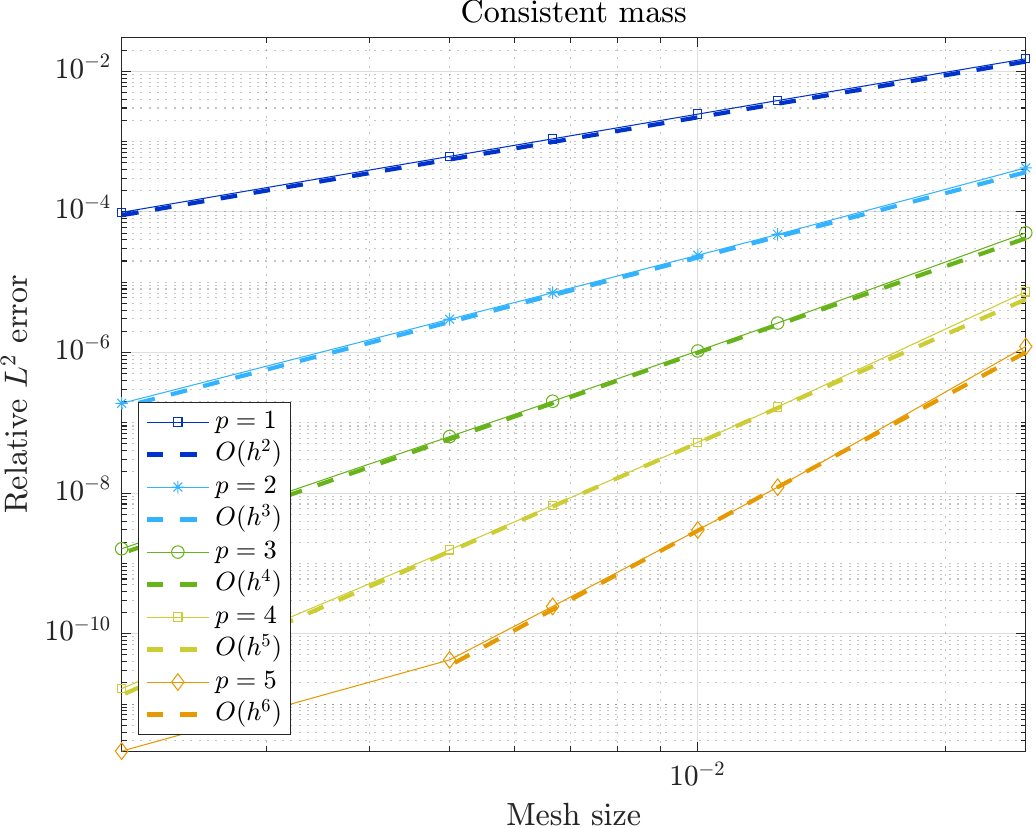}
    \caption{Error for $u_2(x,t)$}
    \label{fig: 1D_Laplace_dynamics_consistent_diff_func}
     \end{subfigure}
     \hfill
    \caption{Consistent mass}
    \label{fig: 1D_Laplace_dynamics_consistent}
\end{figure}

\begin{figure}[H]
     \centering
     \begin{subfigure}[t]{0.48\textwidth}
    \centering
    \includegraphics[width=\textwidth]{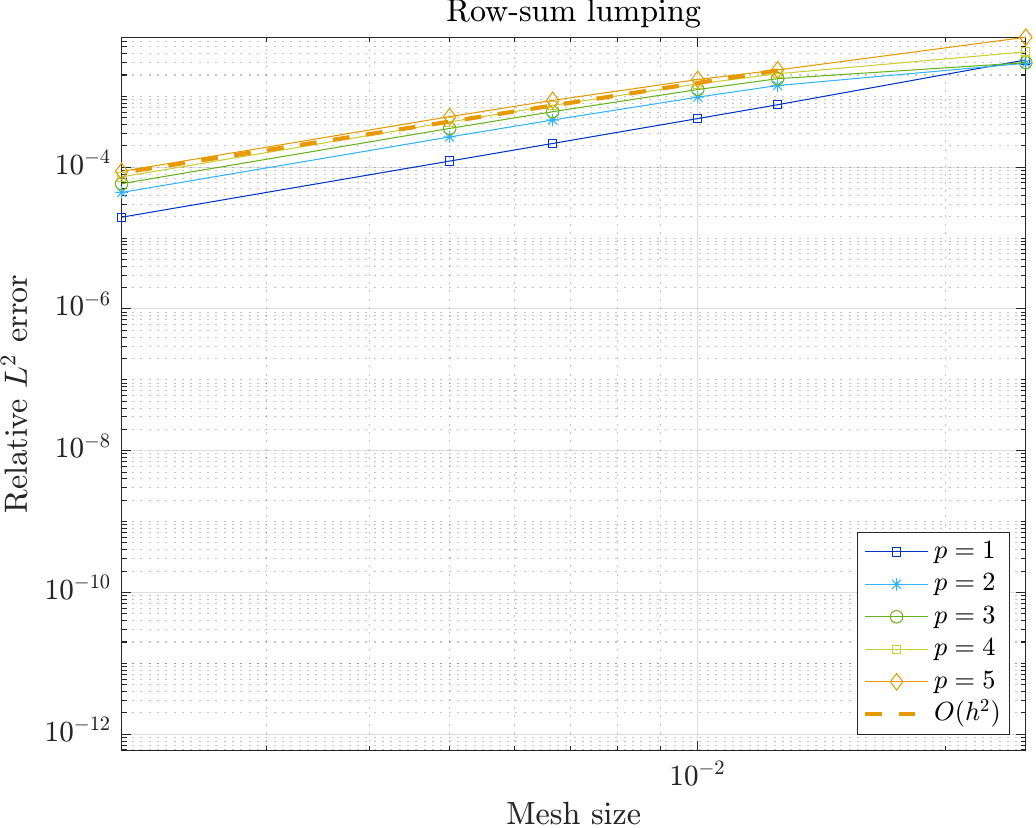}
    \caption{Error for $u_1(x,t)$}
    \label{fig: 1D_Laplace_dynamics_ML_Bspline_easy_func}
     \end{subfigure}
     \hfill
     \begin{subfigure}[t]{0.48\textwidth}
    \centering
    \includegraphics[width=\textwidth]{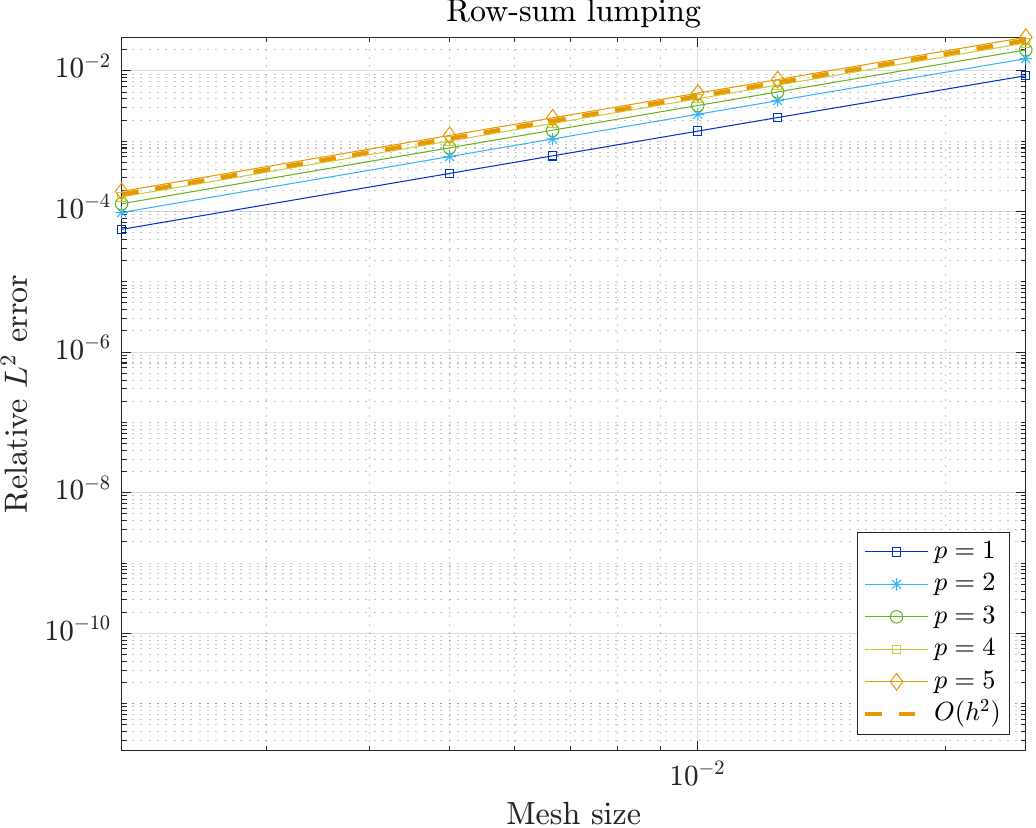}
    \caption{Error for $u_2(x,t)$}
    \label{fig: 1D_Laplace_dynamics_ML_Bspline_diff_func}
     \end{subfigure}
     \hfill
    \caption{Lumped mass (B-spline basis)}
    \label{fig: 1D_Laplace_dynamics_ML_Bspline}
\end{figure}

\begin{figure}[H]
     \centering
     \begin{subfigure}[t]{0.48\textwidth}
    \centering
    \includegraphics[width=\textwidth]{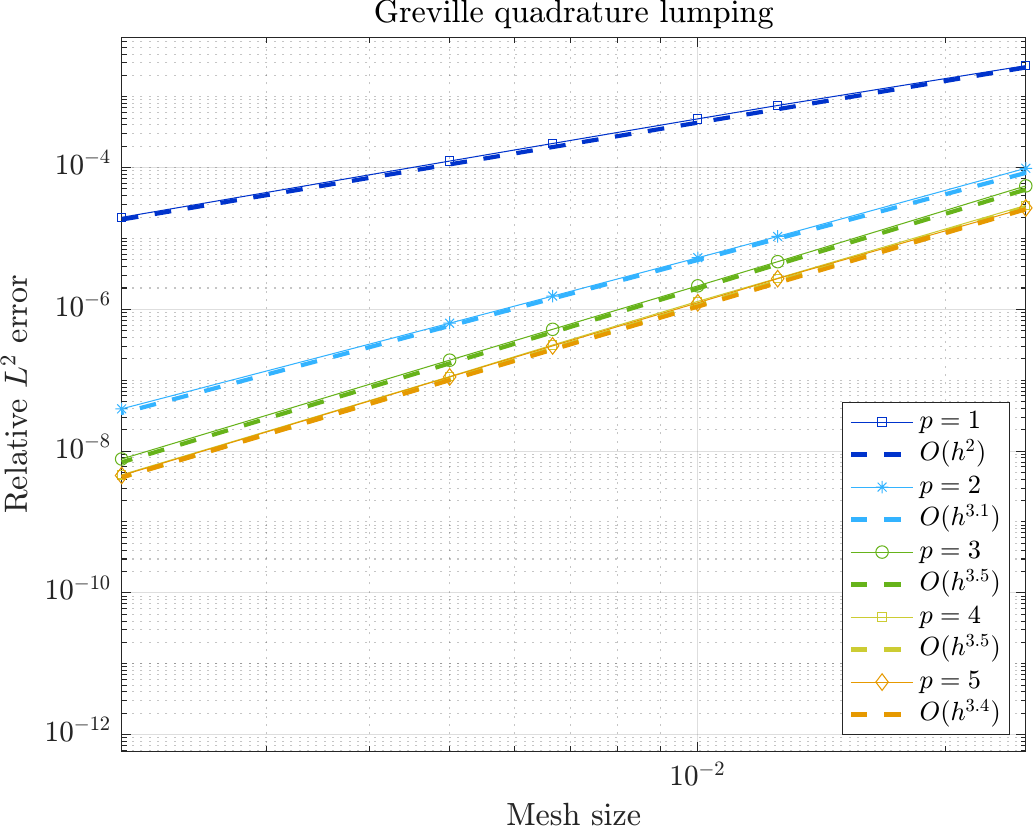}
    \caption{Error for $u_1(x,t)$}
    \label{fig: 1D_Laplace_dynamics_ML_Greville_easy_func}
     \end{subfigure}
     \hfill
     \begin{subfigure}[t]{0.48\textwidth}
    \centering
    \includegraphics[width=\textwidth]{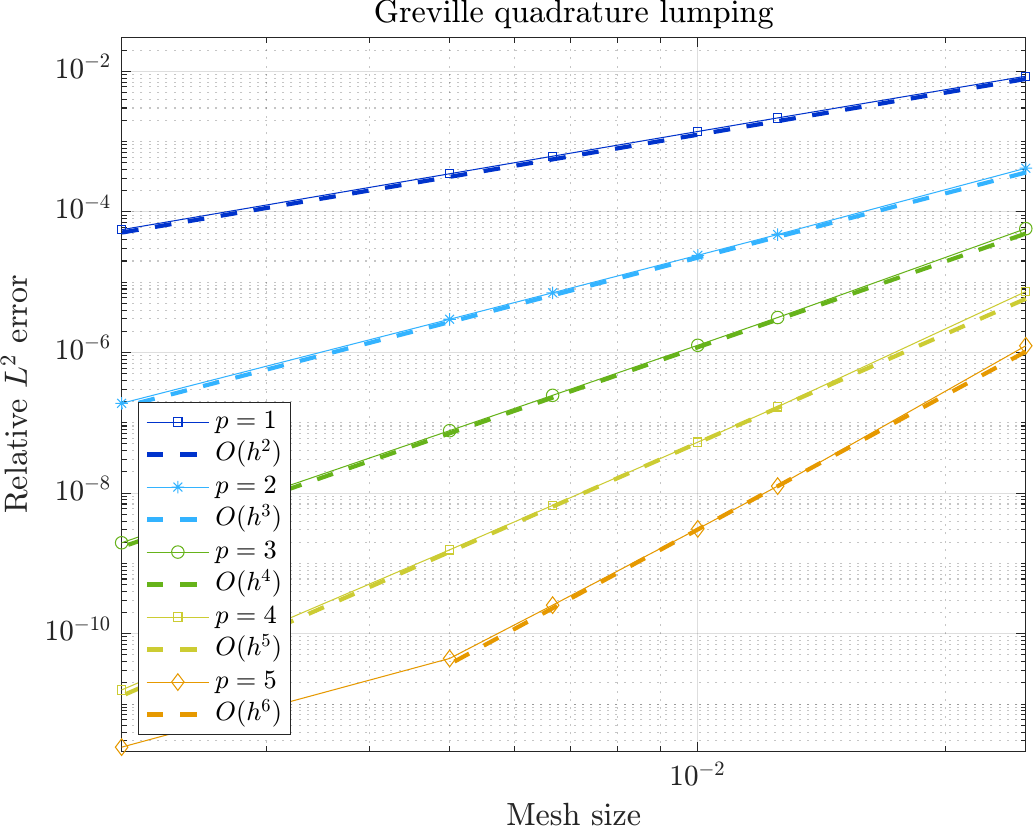}
    \caption{Error for $u_2(x,t)$}
    \label{fig: 1D_Laplace_dynamics_ML_Greville_diff_func}
     \end{subfigure}
     \hfill
    \caption{Lumped mass (Greville Lagrange spline basis)}
    \label{fig: 1D_Laplace_dynamics_ML_Greville}
\end{figure}

\begin{figure}[H]
     \centering
     \begin{subfigure}[t]{0.48\textwidth}
    \centering
    \includegraphics[width=\textwidth]{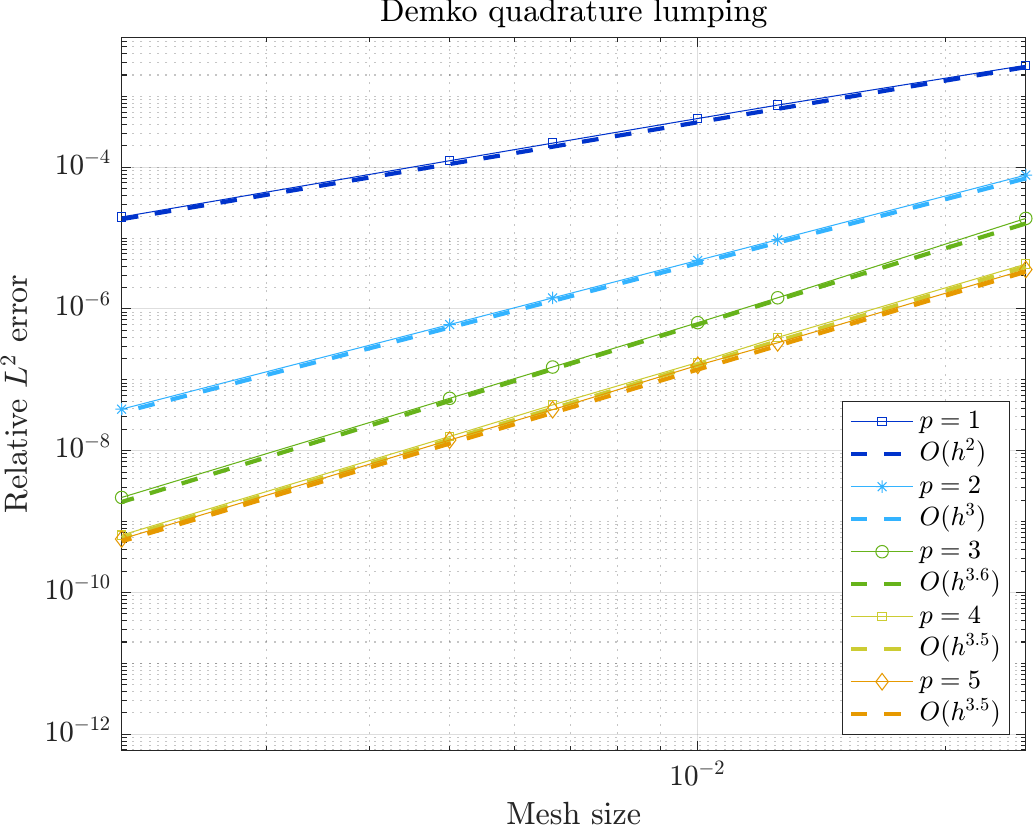}
    \caption{Error for $u_1(x,t)$}
    \label{fig: 1D_Laplace_dynamics_ML_Demko_easy_func}
     \end{subfigure}
     \hfill
     \begin{subfigure}[t]{0.48\textwidth}
    \centering
    \includegraphics[width=\textwidth]{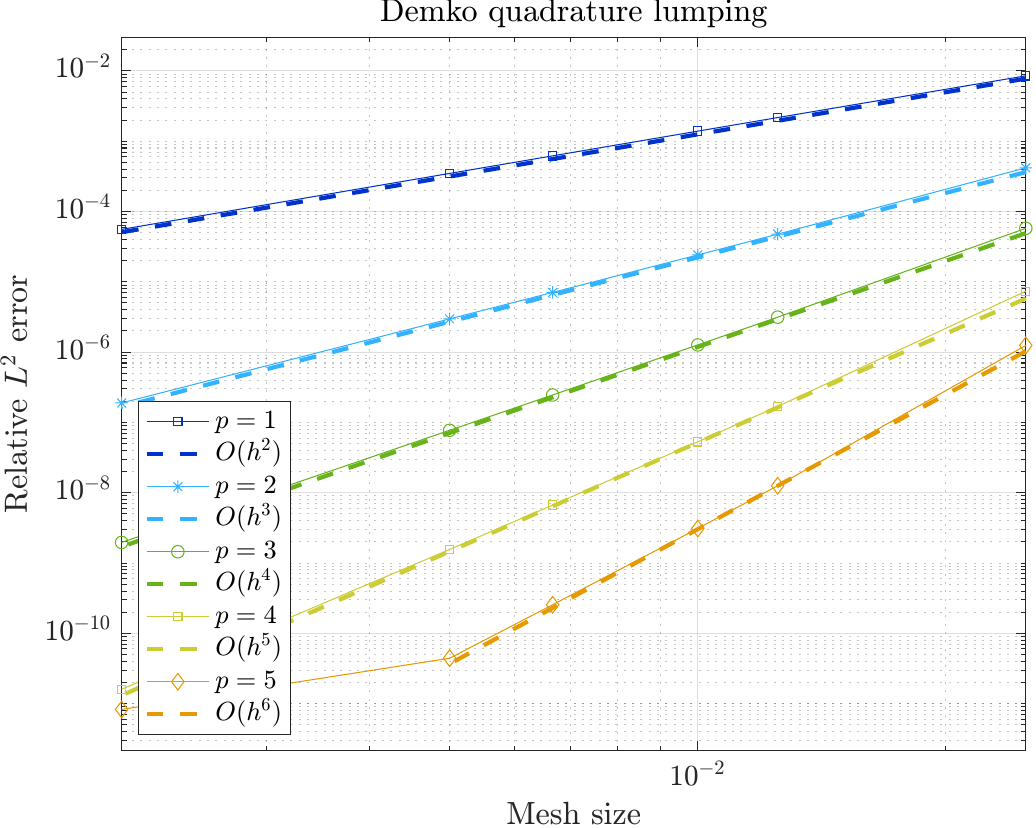}
    \caption{Error for $u_2(x,t)$}
    \label{fig: 1D_Laplace_dynamics_ML_Demko_diff_func}
     \end{subfigure}
     \hfill
    \caption{Lumped mass (Demko Lagrange spline basis)}
    \label{fig: 1D_Laplace_dynamics_ML_Demko}
\end{figure}

\begin{figure}[H]
     \centering
     \begin{subfigure}[t]{0.48\textwidth}
    \centering
    \includegraphics[width=\textwidth]{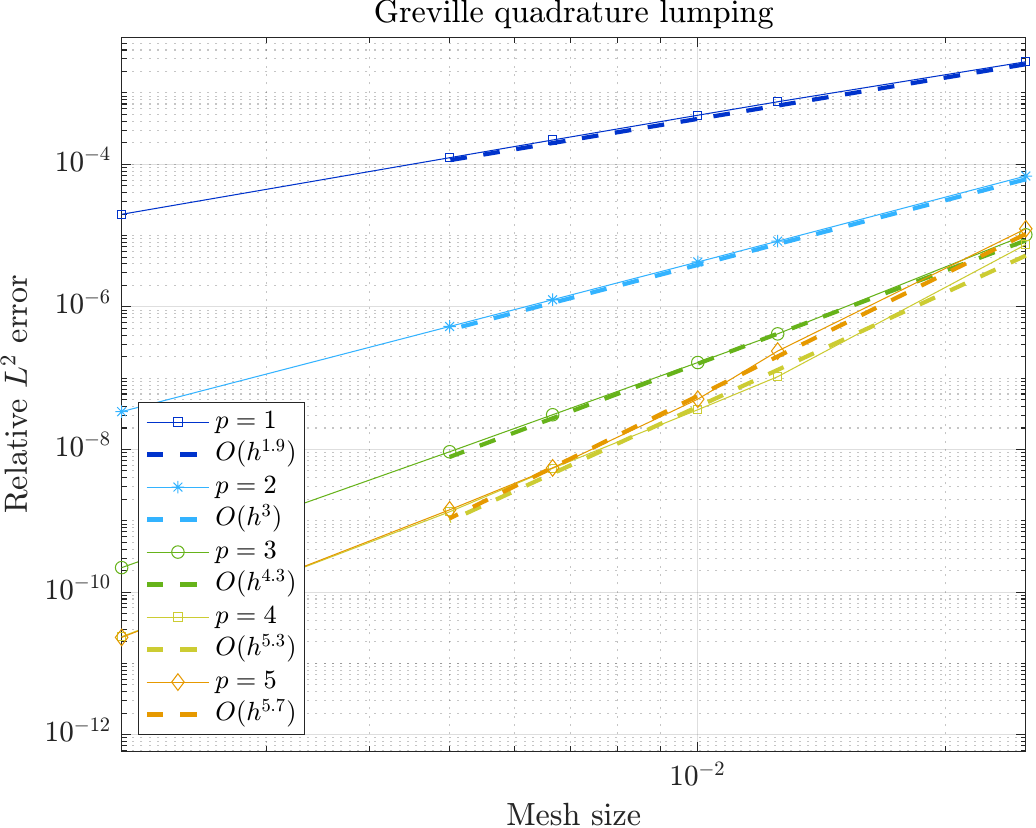}
    \caption{Greville Lagrange spline basis}
    \label{fig: 1D_Laplace_dynamics_ML_Greville_easy_func_sub}
     \end{subfigure}
     \hfill
     \begin{subfigure}[t]{0.48\textwidth}
    \centering
    \includegraphics[width=\textwidth]{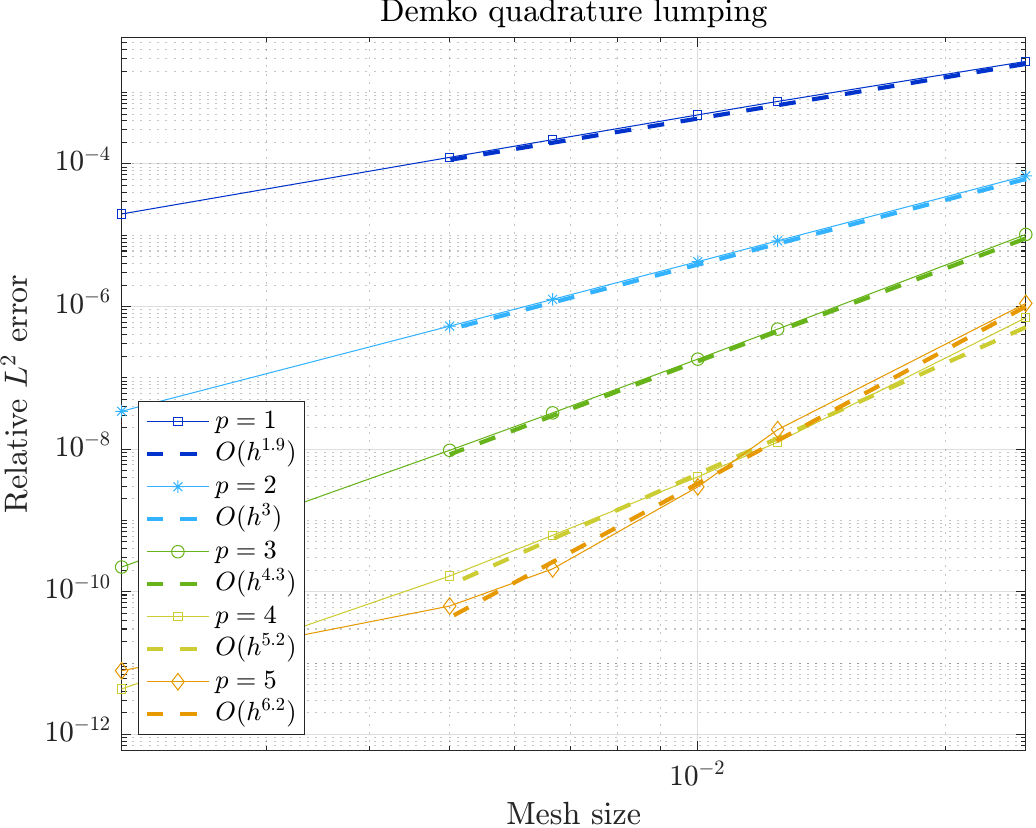}
    \caption{Demko Lagrange spline basis}
    \label{fig: 1D_Laplace_dynamics_ML_Demko_easy_func_sub}
     \end{subfigure}
     \hfill
    \caption{Relative $L^2$ error for $u_1(x,t)$ computed over the sub-interval $[0.1, 0.9]$}
    \label{fig: 1D_Laplace_dynamics_ML_easy_func_sub}
\end{figure}
\end{example}

\begin{example}[1D - Dynamics]
\label{ex: 1D_dynamics_mixed}
To complete the previous example, we consider an initial boundary value problem with mixed boundary conditions instead. The manufactured solution is in this case $u(x,t)=w(x)\sin(\omega t)$ where
\begin{equation}
\label{eq: function_q(x)}
    w(x) = c^{x^a}x\cos\left(\frac{\pi}{x_l-x}\right)
\end{equation}
with parameters $x_l=9/8$, $c=8$, $a=10$ and $\omega=3\pi$. The function $w(x)$ shown in \Cref{fig: 1D_Laplace_dynamics_func_mixed_bc} satisfies homogeneous Dirichlet boundary conditions on the left boundary and we prescribe Neumann boundary conditions on the right boundary. Imposing those boundary conditions does not require any special attention. Thanks to the interpolatory nature of the basis functions, the Dirichlet boundary condition is treated as in standard FEM, SEM and IGA, while the Neumann boundary condition is naturally incorporated in the right-hand side. Similarly to the previous example, we study the convergence of the relative $L^2$ error at the final time $T=1.5$. The results for the consistent mass (\Cref{fig: 1D_Laplace_dynamics_consistent_mixed_bc}) and the lumped mass, whether in the B-spline basis (\Cref{fig: 1D_Laplace_dynamics_ML_Bspline_mixed_bc}) or in the interpolatory spline bases (\Cref{fig: 1D_Laplace_dynamics_ML_Greville_mixed_bc,fig: 1D_Laplace_dynamics_ML_Demko_mixed_bc}) are very similar to those of $u_2$ in \Cref{ex: 1D_dynamics}. In particular, the numerical solutions for the lumped mass in the interpolatory spline bases converge at the optimal rate, except for the Greville Lagrange spline basis with the highest degree ($p=5$).

\begin{figure}[H]
    \centering
    \includegraphics[scale=0.5]{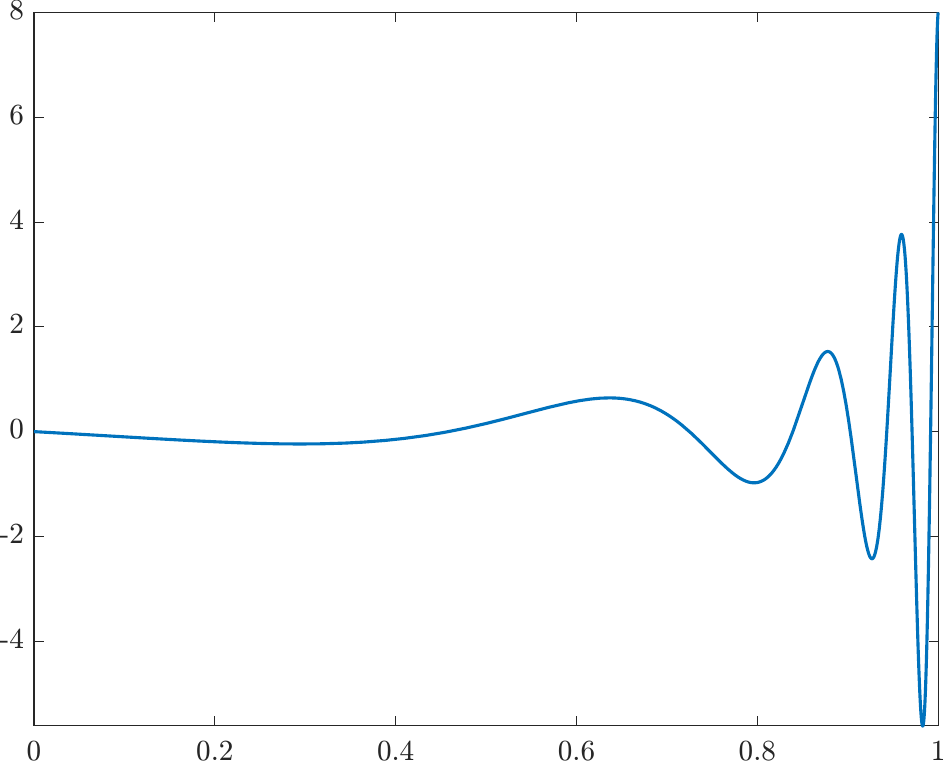}
    \caption{Function $w(x)$ for \Cref{ex: 1D_dynamics_mixed}}
    \label{fig: 1D_Laplace_dynamics_func_mixed_bc}
\end{figure}

\begin{figure}[H]
     \centering
     \begin{subfigure}[t]{0.48\textwidth}
    \centering
    \includegraphics[width=\textwidth]{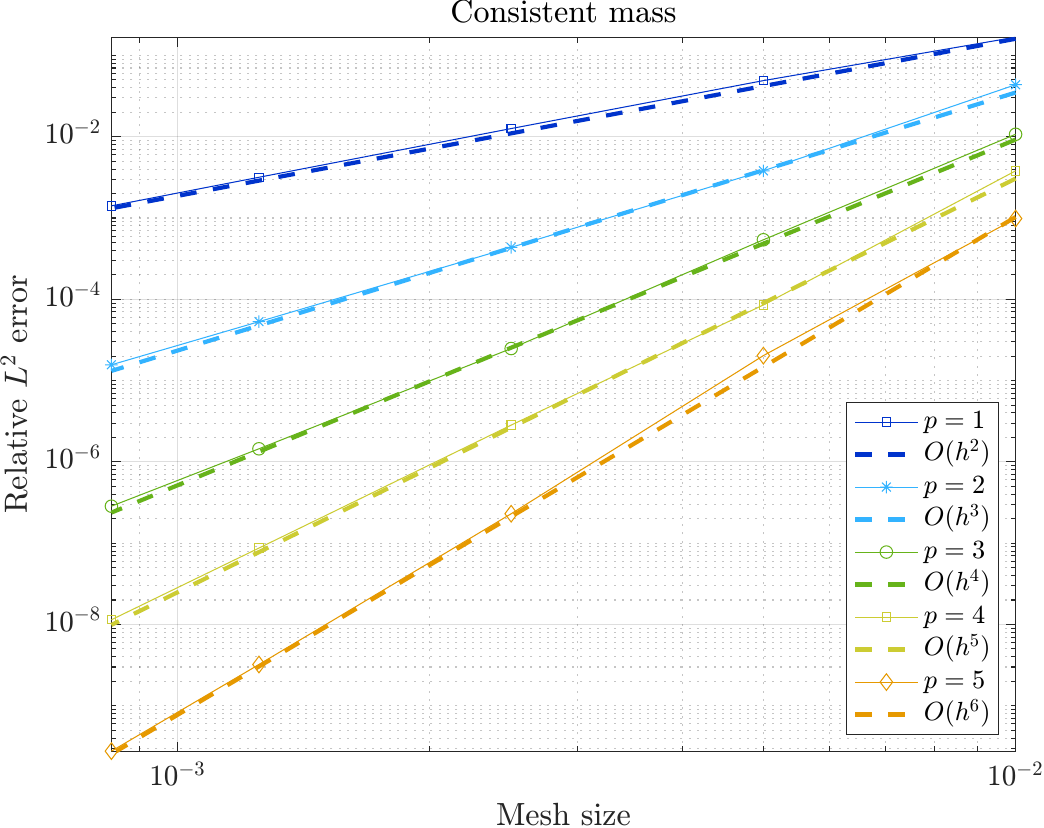}
    \caption{Consistent mass}
    \label{fig: 1D_Laplace_dynamics_consistent_mixed_bc}
     \end{subfigure}
     \hfill
     \begin{subfigure}[t]{0.48\textwidth}
    \centering
    \includegraphics[width=\textwidth]{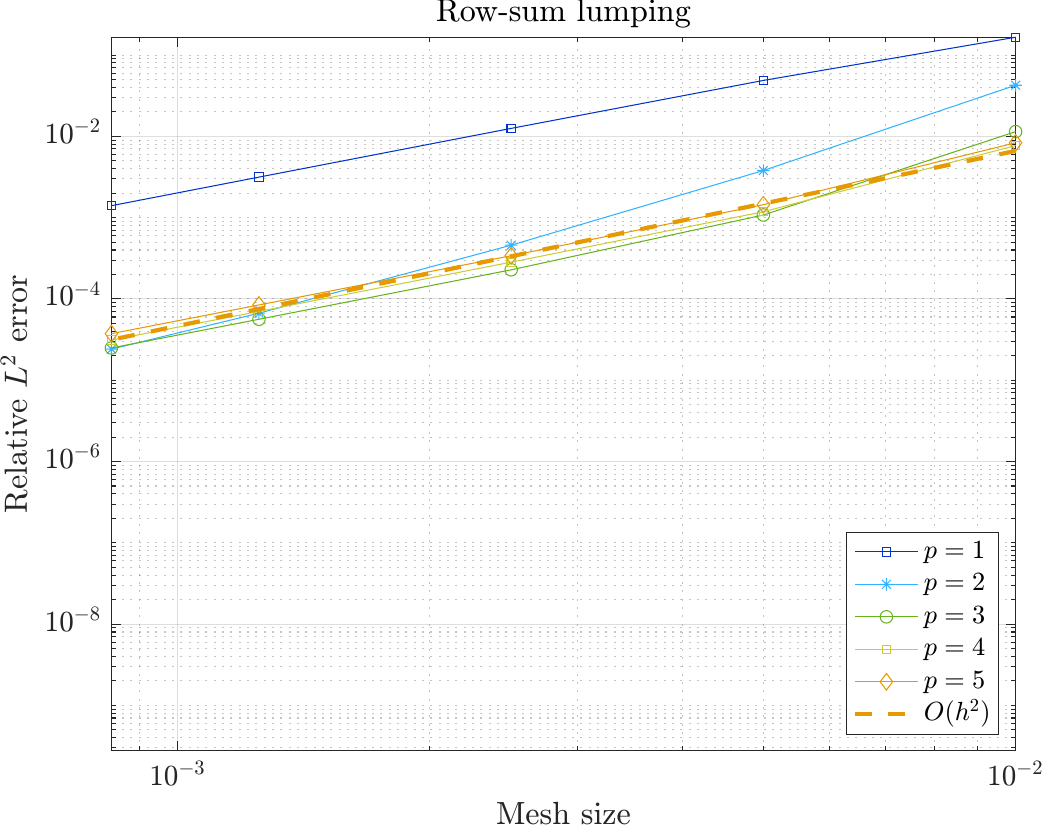}
    \caption{Lumped mass (B-spline basis)}
    \label{fig: 1D_Laplace_dynamics_ML_Bspline_mixed_bc}
     \end{subfigure}
     \hfill
     \vspace{5pt}
    \begin{subfigure}[t]{0.48\textwidth}
    \centering
    \includegraphics[width=\textwidth]{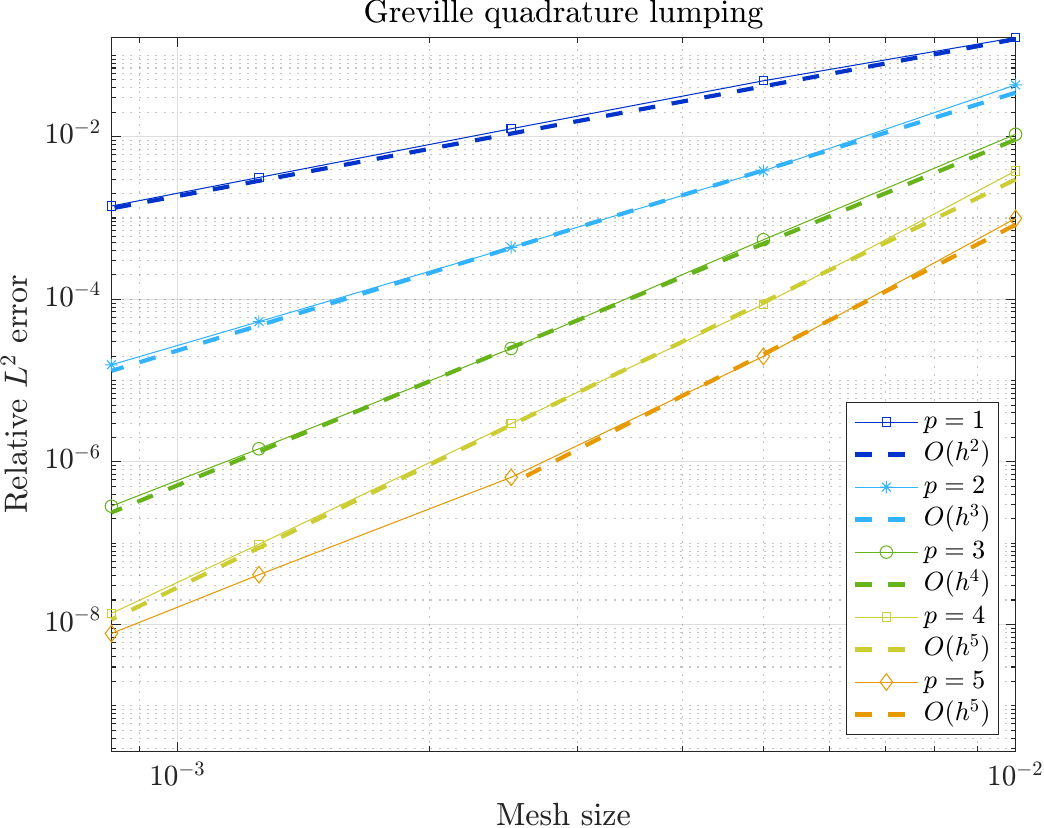}
    \caption{Lumped mass (Greville Lagrange spline basis)}
    \label{fig: 1D_Laplace_dynamics_ML_Greville_mixed_bc}
     \end{subfigure}
     \hfill
    \begin{subfigure}[t]{0.48\textwidth}
    \centering
    \includegraphics[width=\textwidth]{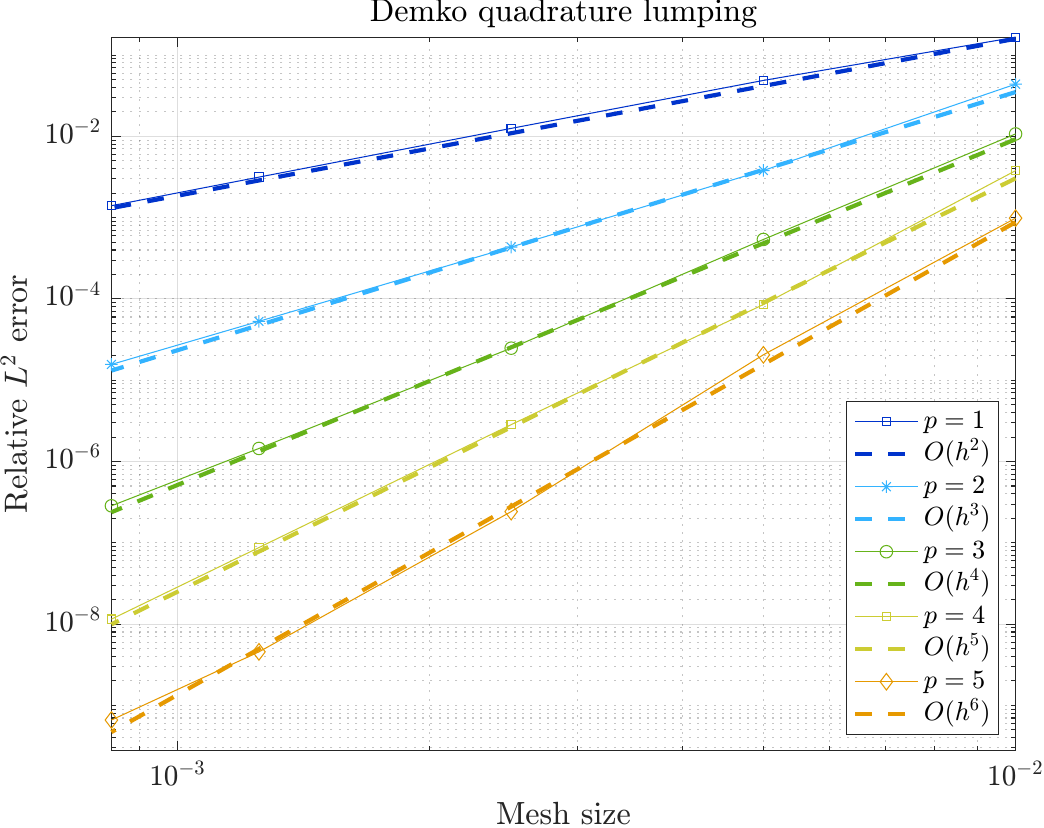}
    \caption{Lumped mass (Demko Lagrange spline basis)}
    \label{fig: 1D_Laplace_dynamics_ML_Demko_mixed_bc}
     \end{subfigure}
     \hfill
    \caption{Relative $L^2$ error for $u(x,t)$ with mixed boundary conditions}
    \label{fig: 1D_Laplace_dynamics_mixed_bc}
\end{figure}
\end{example}

\begin{example}[2D - Dynamics]
\label{ex: 2D_dynamics}
The objective of this example is to demonstrate that the features observed in \Cref{ex: 1D_dynamics} in 1D also hold in higher dimensions, on potentially distorted meshes and while accounting for temporal discretization errors. For this purpose, we choose the upper quarter patch of the $4$-patch square plate with a hole shown in \Cref{fig: 2D_upper_quarter} and remark that this geometry is not a tensor product. Once again, we consider two manufactured solutions $u_i(r,t)=w_i(r)\sin(\omega t)$, where $r=\sqrt{x^2+y^2}$ and
\begin{equation*}
    w_1(r)= (r-1)\cos(\omega r) \quad \text{and} \quad w_2(r)=(r-1)\mathrm{e}^{-\left(\frac{r-r_c}{\sigma}\right)^2}
\end{equation*}
with parameter values $\sigma=0.1$, $r_c=1.5$ and $\omega = 2\pi$. Those functions are shown in \Cref{fig: 2D_Laplace_upper_quarter_dynamics_func}. The functions $u_i(r,t)$ both satisfy homogeneous Dirichlet boundary conditions on the lower curved boundary and we prescribe Neumann boundary conditions on the other three boundaries. The properties of $w_1(r)$ and $w_2(r)$ are rather similar to those in \Cref{ex: 1D_dynamics}. Indeed, while the first derivative of $w_1(r)$ does not vanish near the Dirichlet boundary, $w_2(r)$ has infinitely many (near) vanishing derivatives. 

\begin{figure}[H]
    \centering
    \includegraphics[scale=0.5]{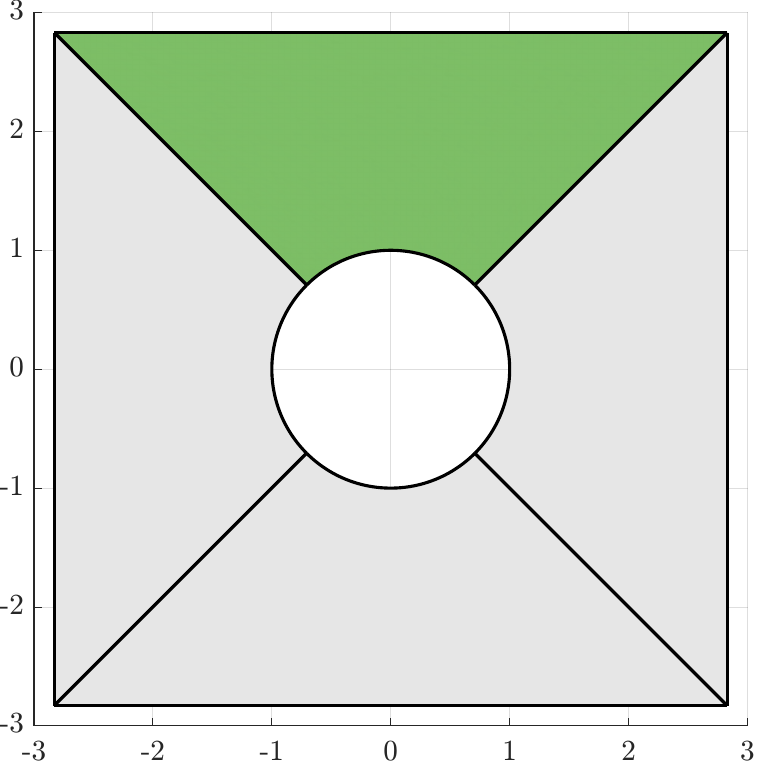}
    \caption{Upper quarter of a $4$-patch square plate with a hole}
    \label{fig: 2D_upper_quarter}
\end{figure}

\begin{figure}[H]
     \centering
     \begin{subfigure}[t]{0.48\textwidth}
    \centering
    \includegraphics[width=\textwidth]{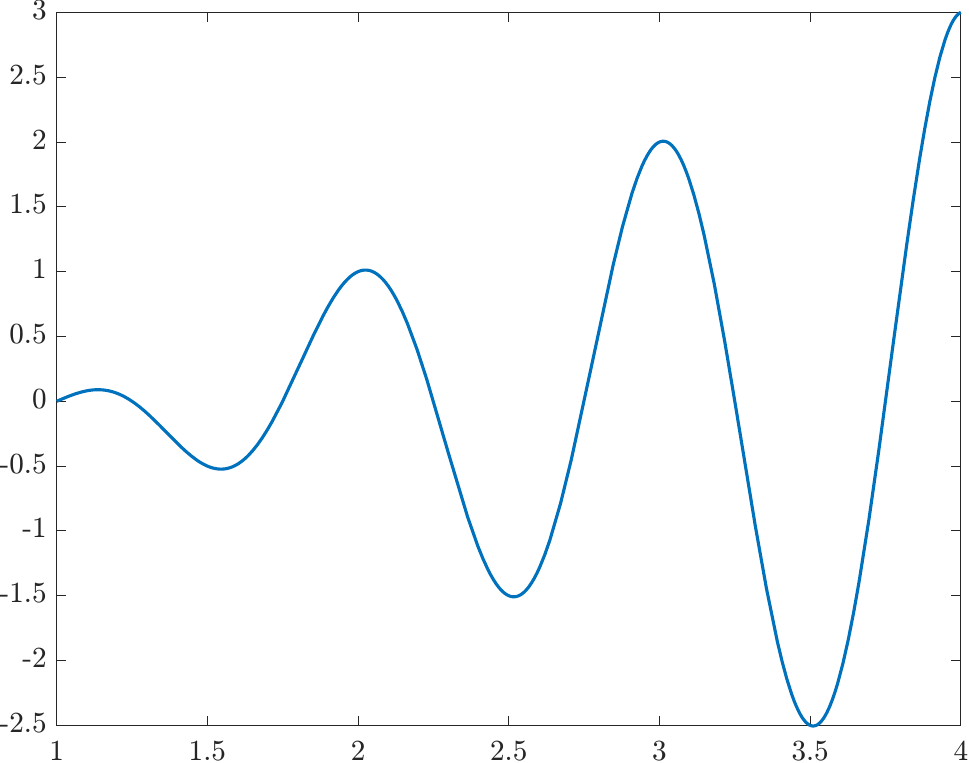}
    \caption{Function $w_1(r)$}
    \label{fig: 2D_Laplace_upper_quarter_dynamics_easy_func}
     \end{subfigure}
     \hfill
     \begin{subfigure}[t]{0.48\textwidth}
    \centering
    \includegraphics[width=\textwidth]{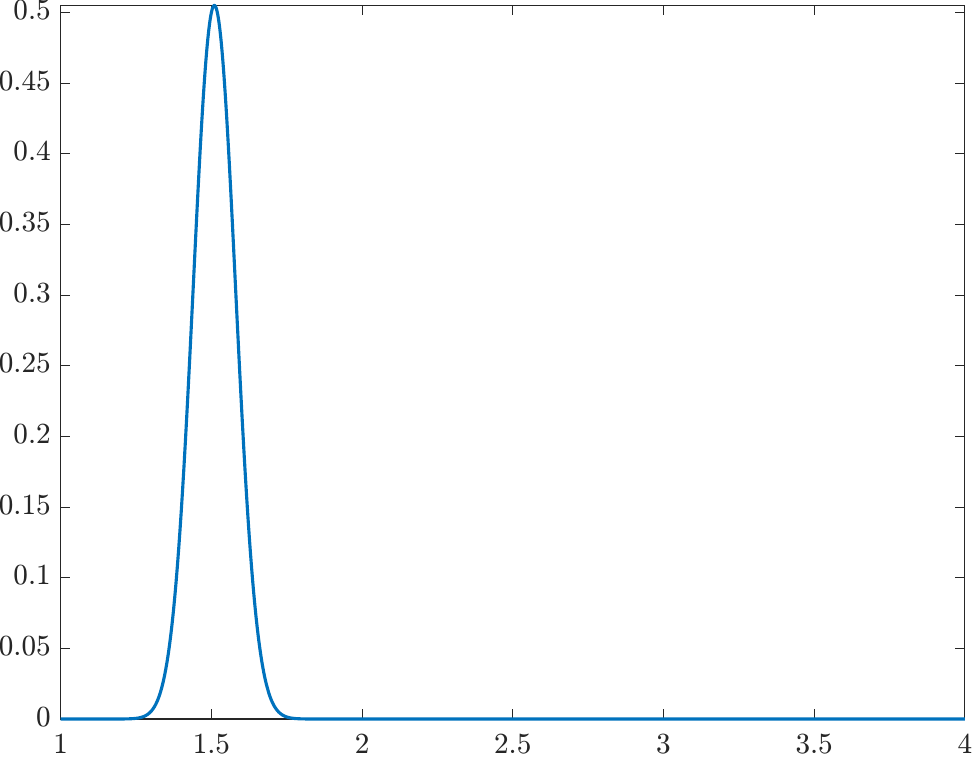}
    \caption{Function $w_2(r)$}
    \label{fig: 2D_Laplace_upper_quarter_dynamics_diff_func}
     \end{subfigure}
     \hfill
    \caption{Spatial part of the manufactured solutions for \Cref{ex: 2D_dynamics}}
    \label{fig: 2D_Laplace_upper_quarter_dynamics_func}
\end{figure}

We now solve the wave equation on the upper quarter of the plate with unit material coefficients. The non-homogeneous Neumann boundary data, the right-hand side and the initial conditions are computed from the manufactured solutions. Spline spaces of degree $1$ to $5$ are then built on increasingly fine meshes. Although the semi-discrete solution also admits a closed form solution for this example, we decided to couple high order spatial discretizations with high order temporal ones to complete the picture. In this specific case, the central difference method is only used for degree $p=1$ while classical explicit Runge Kutta (RK) methods of order $3,4,5,6$ are used for degree $2,3,4,5$ discretizations, respectively, and ensure that the spatial accuracy matches the temporal one. The expression of their critical time step is similar to the central difference method and only differs by a constant in the numerator (see \Cref{se: explicit_rk_methods}), which is then further reduced by a safety factor of $0.85$. However, we must stress that beyond order $4$, the number of stages of explicit RK methods always exceeds the order and these methods are less attractive.

Hereafter, we analyze the convergence of the fully discrete solution $u_h(x,t)$ by computing the relative $L^2$ error at discrete times
\begin{equation*}
    \frac{\|u(t_j)-u_h^j\|_{L^2}}{\|u(t_j)\|_{L^2}},
\end{equation*}
where $t_j = j \Delta t$ and $u_h^j$ denotes the fully discrete solution at step $j$. We study the convergence of the relative error for both manufactured solutions at the final time $T=0.25$, when the discretization error is supposedly the largest. The results for the consistent mass (\Cref{fig: 2D_Laplace_dynamics_consistent}) and the lumped mass approximation, either directly in the B-spline basis (\Cref{fig: 2D_Laplace_dynamics_ML_Bspline}) or in the interpolatory spline basis based on the Greville points (\Cref{fig: 2D_Laplace_dynamics_ML_Greville}) or the Demko points (\Cref{fig: 2D_Laplace_dynamics_ML_Demko}) closely resemble those in \Cref{ex: 1D_dynamics}. Thus, those trends were not limited to 1D and are oblivious to mesh distortion. In particular, for the consistent mass, the error converges at the optimal rate and sometimes even shows signs of super-convergence. For the lumped mass in the B-spline basis, the convergence rate consistently drops to $2$. In fact, increasing the spline degree even deteriorates the constant. In contrast, when lumping the mass matrix in the interpolatory spline basis, the convergence rate is again solution-dependent. It stalls at about $3.5$ for $u_1$ but increases well beyond that for $u_2$. This second function actually required much finer meshes to get passed a pre-asymptotic fast convergence phase. Although the convergence rate for the Greville and Demko points is rather similar, the constants sometimes differ by several orders of magnitude, with the Demko points often yielding greater accuracy. The improved accuracy of high order mass lumping strategies lead to visible differences in the numerical solutions, especially on coarse meshes. The approximations of $u_2$ are exemplarily shown in \Cref{fig: 2D_Laplace_upper_quarter_dynamics_snapshots_diff_func} at the final time. 

Overall, the results of the 1D and 2D experiments perfectly align and certainly warrant further investigations. Similar convergence rates were experienced on different examples and the functions $u_1$ and $u_2$ in our experiments reflect the range of cases encountered.

\begin{figure}[H]
     \centering
     \begin{subfigure}[t]{0.48\textwidth}
    \centering
    \includegraphics[width=\textwidth]{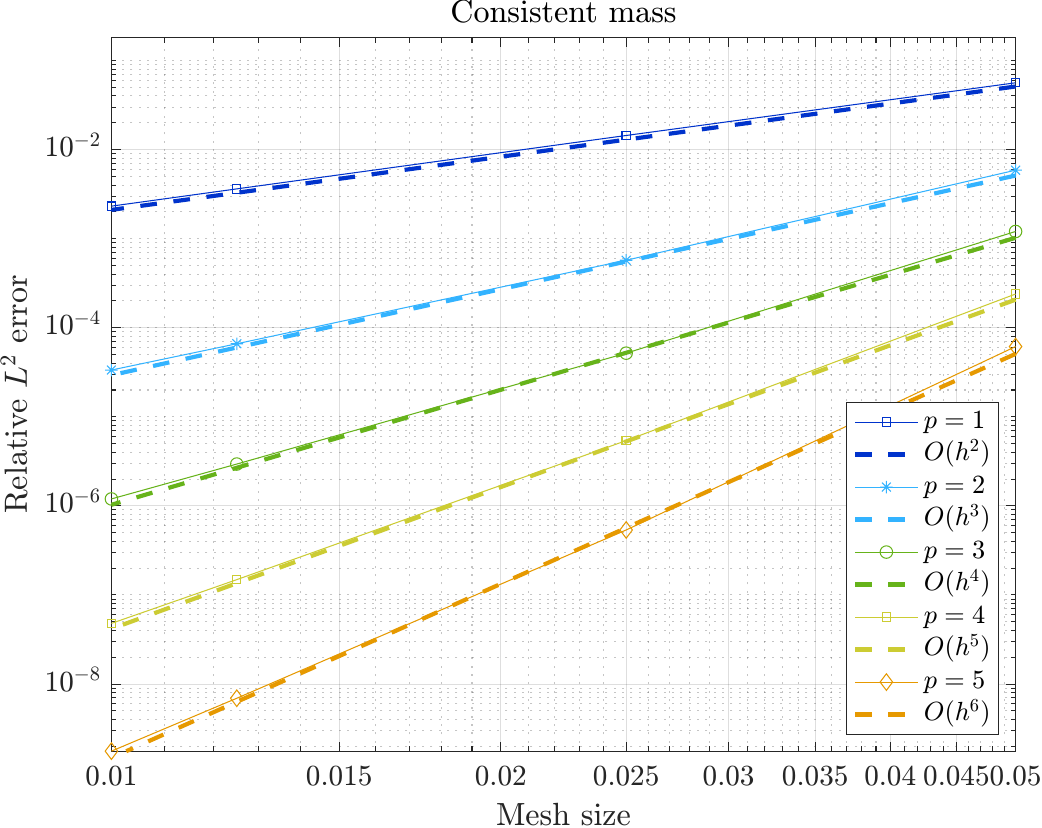}
    \caption{Error for $u_1(r,t)$}
    \label{fig: 2D_Laplace_upper_quarter_dynamics_consistent_easy_func}
     \end{subfigure}
     \hfill
     \begin{subfigure}[t]{0.48\textwidth}
    \centering
    \includegraphics[width=\textwidth]{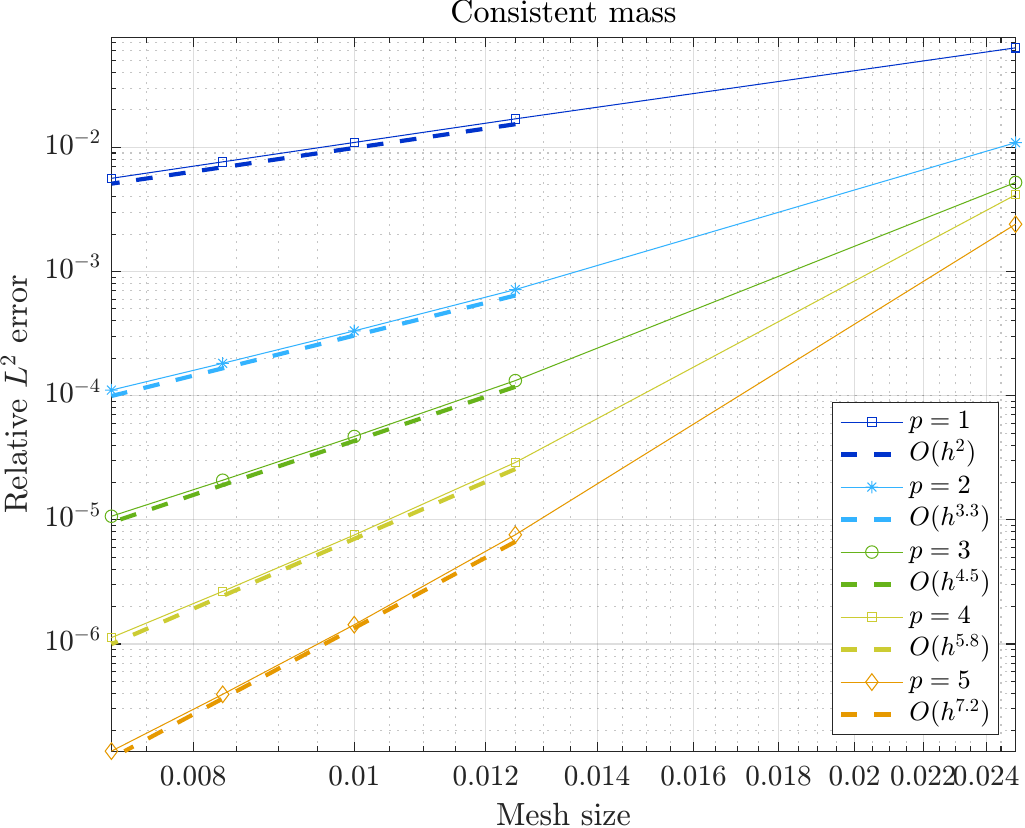}
    \caption{Error for $u_2(r,t)$}
    \label{fig: 2D_Laplace_upper_quarter_dynamics_consistent_diff_func}
     \end{subfigure}
     \hfill
    \caption{Consistent mass}
    \label{fig: 2D_Laplace_dynamics_consistent}
\end{figure}

\begin{figure}[H]
     \centering
     \begin{subfigure}[t]{0.48\textwidth}
    \centering
    \includegraphics[width=\textwidth]{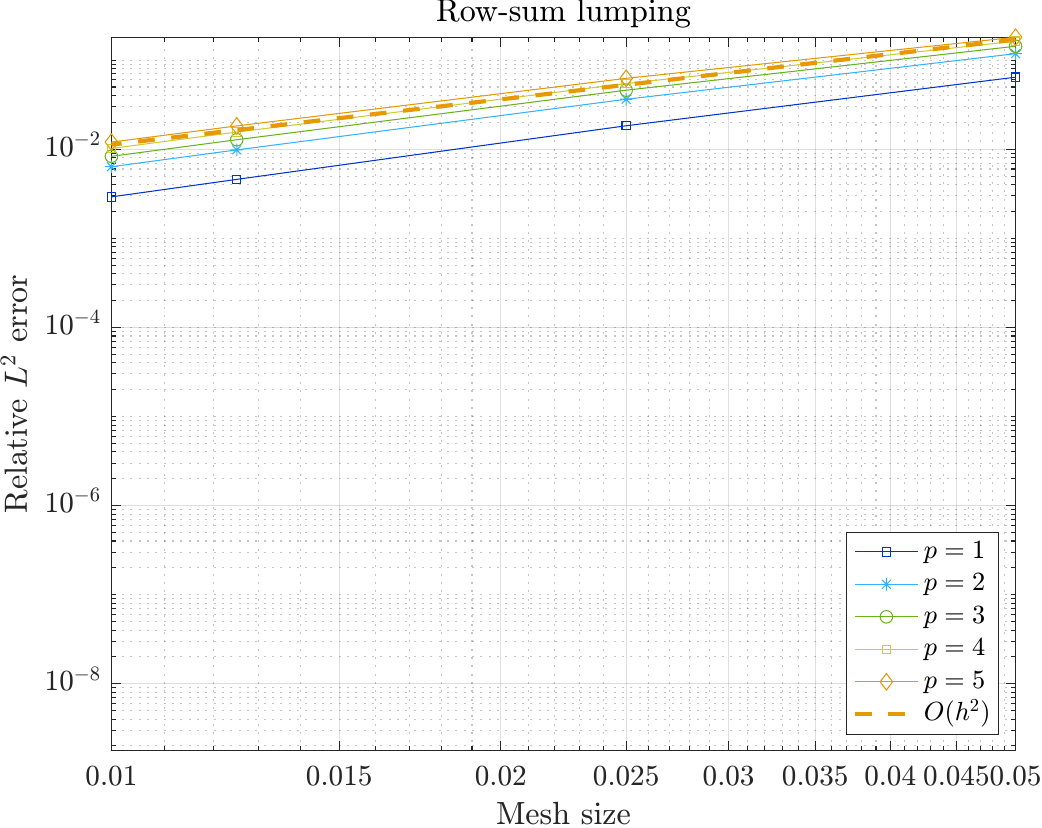}
    \caption{Error for $u_1(r,t)$}
    \label{fig: 2D_Laplace_upper_quarter_dynamics_ML_Bspline_easy_func}
     \end{subfigure}
     \hfill
     \begin{subfigure}[t]{0.48\textwidth}
    \centering
    \includegraphics[width=\textwidth]{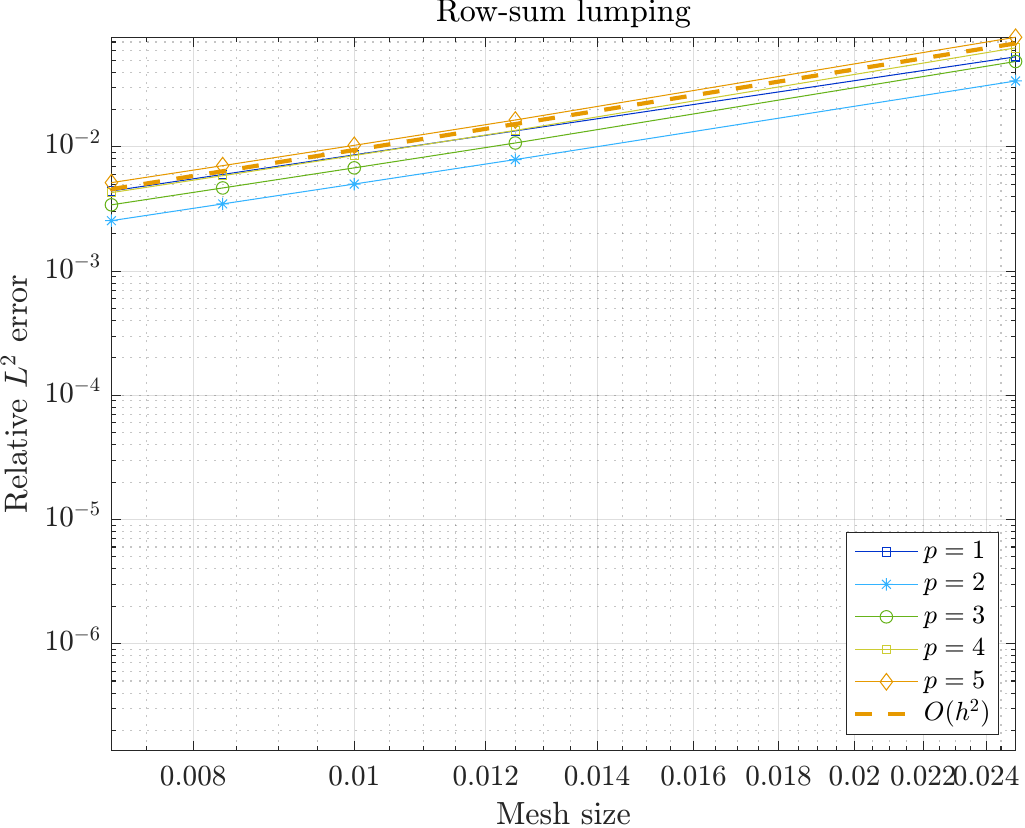}
    \caption{Error for $u_2(r,t)$}
    \label{fig: 2D_Laplace_upper_quarter_dynamics_ML_Bspline_diff_func}
     \end{subfigure}
     \hfill
    \caption{Lumped mass (B-spline basis)}
    \label{fig: 2D_Laplace_dynamics_ML_Bspline}
\end{figure}

\begin{figure}[H]
     \centering
     \begin{subfigure}[t]{0.48\textwidth}
    \centering
    \includegraphics[width=\textwidth]{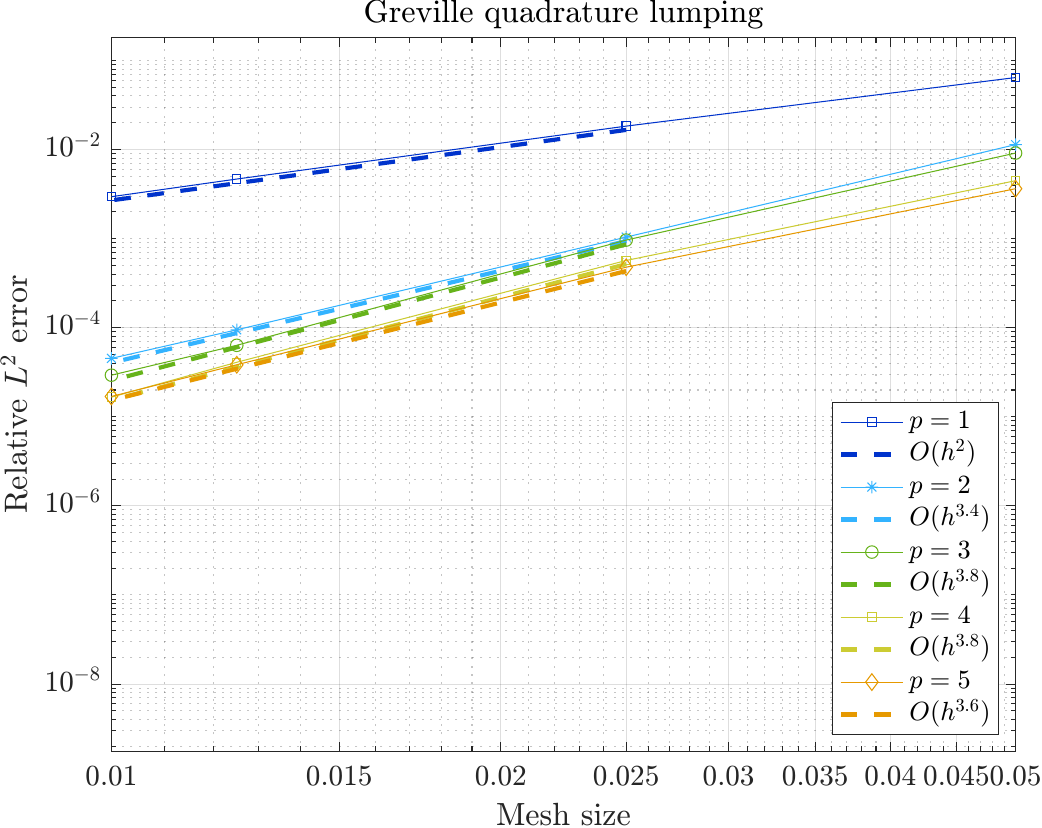}
    \caption{Error for $u_1(r,t)$}
    \label{fig: 2D_Laplace_upper_quarter_dynamics_ML_Greville_easy_func}
     \end{subfigure}
     \hfill
     \begin{subfigure}[t]{0.48\textwidth}
    \centering
    \includegraphics[width=\textwidth]{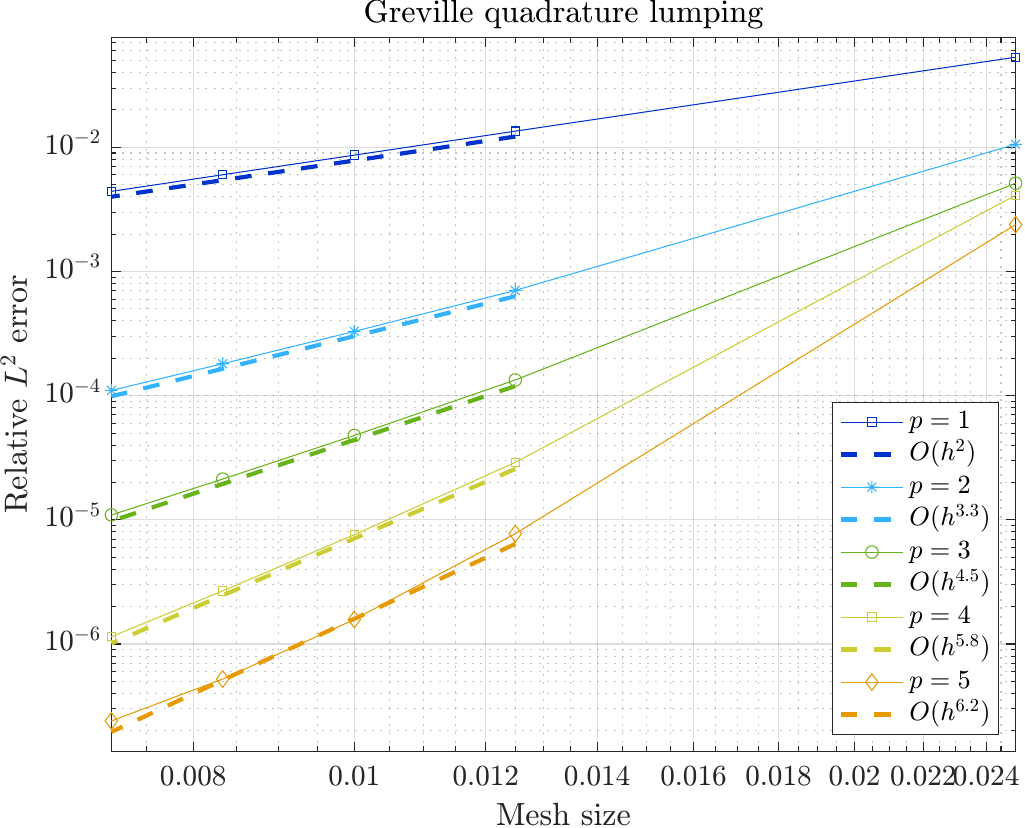}
    \caption{Error for $u_2(r,t)$}
    \label{fig: 2D_Laplace_upper_quarter_dynamics_ML_Greville_diff_func}
     \end{subfigure}
     \hfill
    \caption{Lumped mass (Greville Lagrange spline basis)}
    \label{fig: 2D_Laplace_dynamics_ML_Greville}
\end{figure}

\begin{figure}[H]
     \centering
     \begin{subfigure}[t]{0.48\textwidth}
    \centering
    \includegraphics[width=\textwidth]{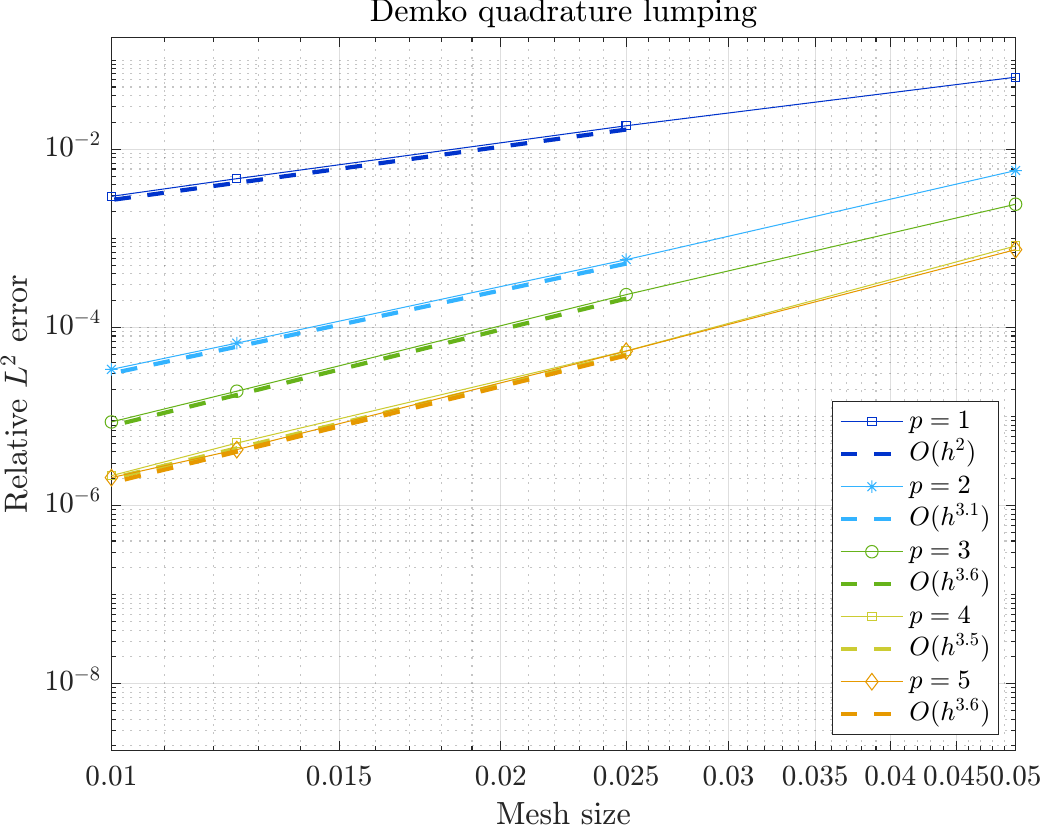}
    \caption{Error for $u_1(r,t)$}
    \label{fig: 2D_Laplace_upper_quarter_dynamics_ML_Demko_easy_func}
     \end{subfigure}
     \hfill
     \begin{subfigure}[t]{0.48\textwidth}
    \centering
    \includegraphics[width=\textwidth]{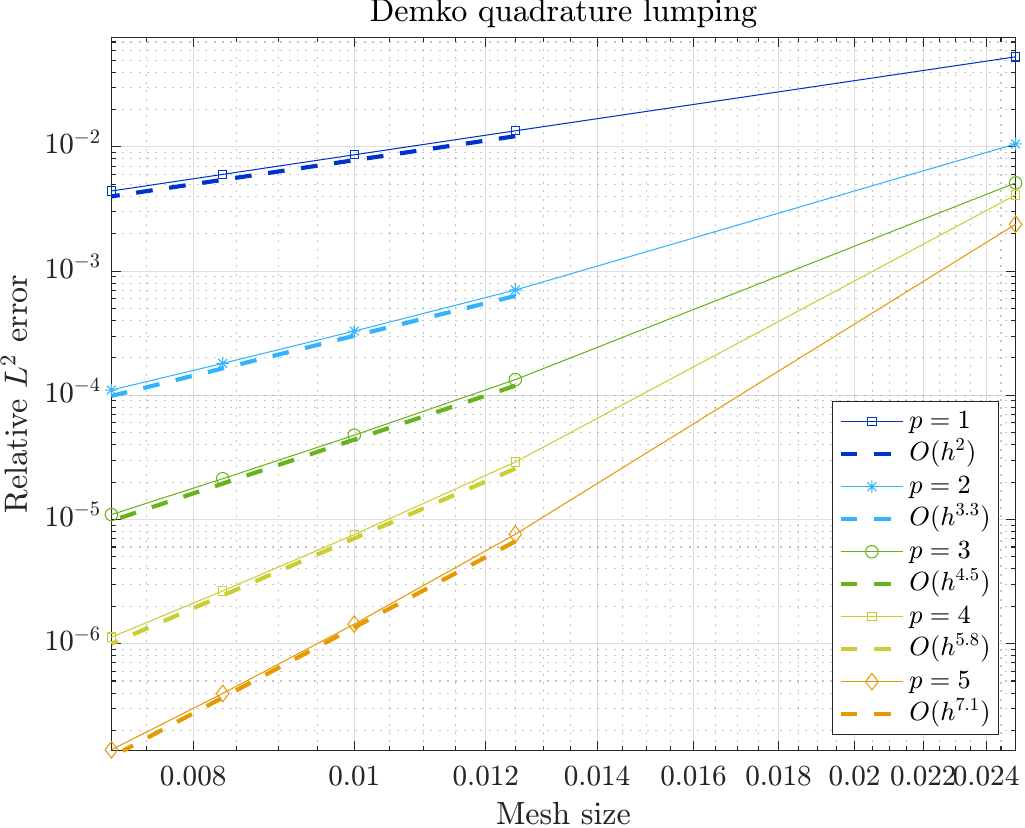}
    \caption{Error for $u_2(r,t)$}
    \label{fig: 2D_Laplace_upper_quarter_dynamics_ML_Demko_diff_func}
     \end{subfigure}
     \hfill
    \caption{Lumped mass (Demko Lagrange spline basis)}
    \label{fig: 2D_Laplace_dynamics_ML_Demko}
\end{figure}    

\begin{figure}[H]
    \centering
    \includegraphics[scale=0.45]{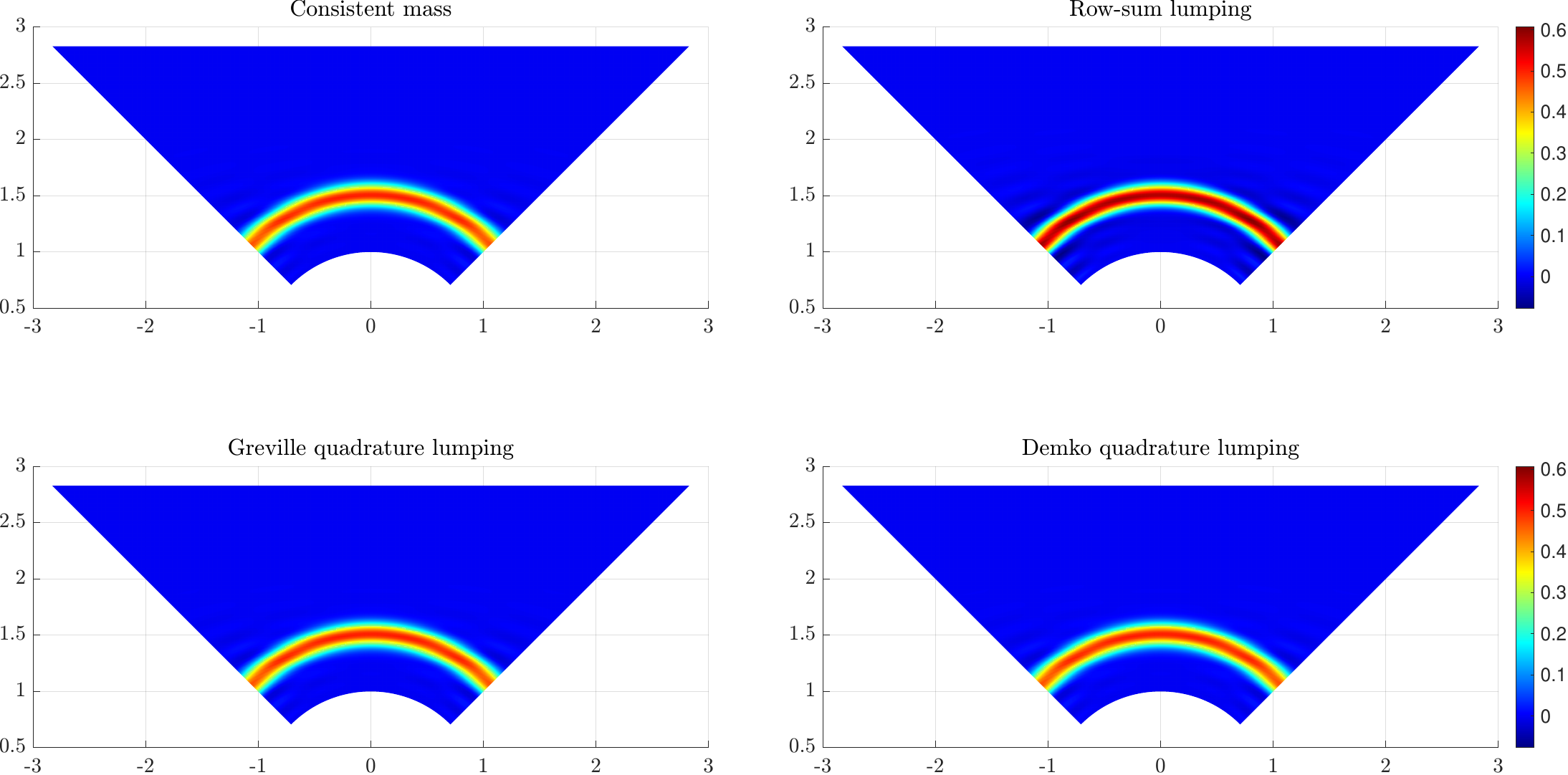}
    \caption{Numerical approximations of $u_2$ at the final time for a quintic spline discretization with $25$ subdivisions in each direction.}
    \label{fig: 2D_Laplace_upper_quarter_dynamics_snapshots_diff_func}
\end{figure}
\end{example}

\begin{example}[Distorted square]
\label{ex: mesh_distortion}
We end this series of experiments with an example investigating the influence of mesh distortion. For this purpose, we consider the same example from \cite{hiemstra2025higher}, where the gridlines of the unit square are distorted by a non-linear map, as shown in \Cref{fig: 2D_Laplace_distorted_square}. A parameter $\alpha$ tunes the severity of the distortion and we refer to the original article for the technical description. The manufactured solution in this case is given by
\begin{equation*}
    u(x,y,t) = \sin(4\pi x)\sin(4 \pi y)\cos(4 \pi t)
\end{equation*}
and we study the convergence of the relative error for the fully discrete solution at the final time $T=1$. Spline spaces of degree $1$ to $5$ are built on meshes with $16,32,64$ and $128$ subdivisions in each direction. Similarly to \Cref{ex: 2D_dynamics}, high order spatial discretizations are coupled to high order temporal ones, described in \Cref{se: explicit_rk_methods}. The results of the convergence study are shown in \Cref{fig: 2D_Laplace_square_dynamics_consistent} for the consistent mass and in \Cref{fig: 2D_Laplace_square_dynamics_ML_Bspline,fig: 2D_Laplace_square_dynamics_ML_Greville,fig: 2D_Laplace_square_dynamics_ML_Demko} for lumped mass approximations. The convergence rates for the consistent mass are completely insensitive to the distortion. Moreover, the accuracy of the row-sum for the B-spline basis, which is already quite poor for regular meshes, thankfully does not get much worse. For what concerns our high order lumping strategies, we only observe a mild dependency on the distortion. Note that the sub-optimal convergence rate already experienced for regular meshes is completely expected from the previous examples in 1D and 2D: the manufactured solution vanishes on the boundary but not its first partial derivatives and therefore closely resembles the function $u_1$ chosen in \Cref{ex: 1D_dynamics,ex: 2D_dynamics}. Overall, the results of this experiment align with our previous findings.

\begin{figure}[H]
     \centering
     \begin{subfigure}[t]{0.48\textwidth}
    \centering
    \includegraphics[width=0.8\textwidth]{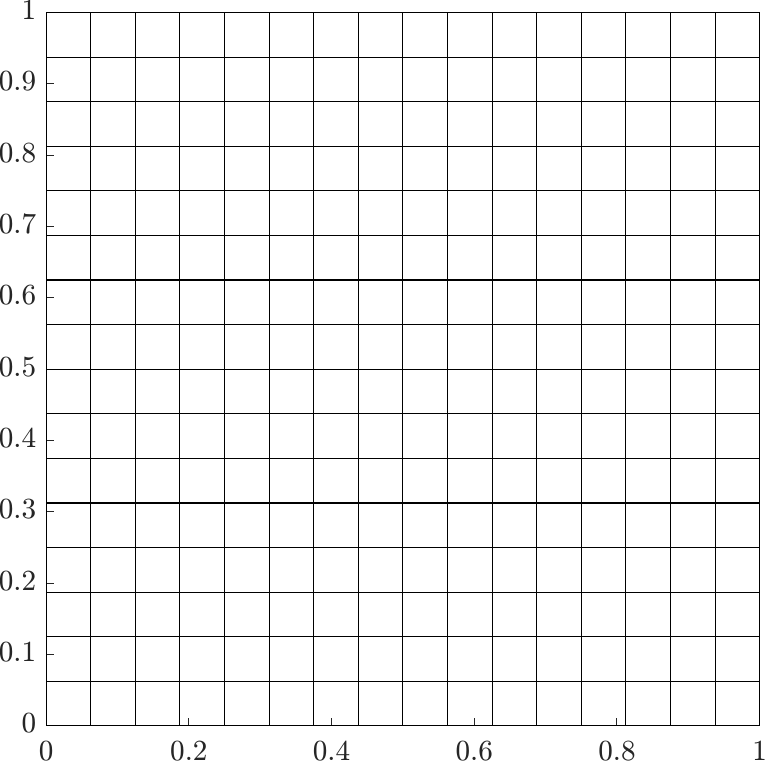}
    \caption{$\alpha=0$}
    \label{fig: 2D_Laplace_unit_square_mesh}
     \end{subfigure}
     \hfill
     \begin{subfigure}[t]{0.48\textwidth}
    \centering
    \includegraphics[width=0.8\textwidth]{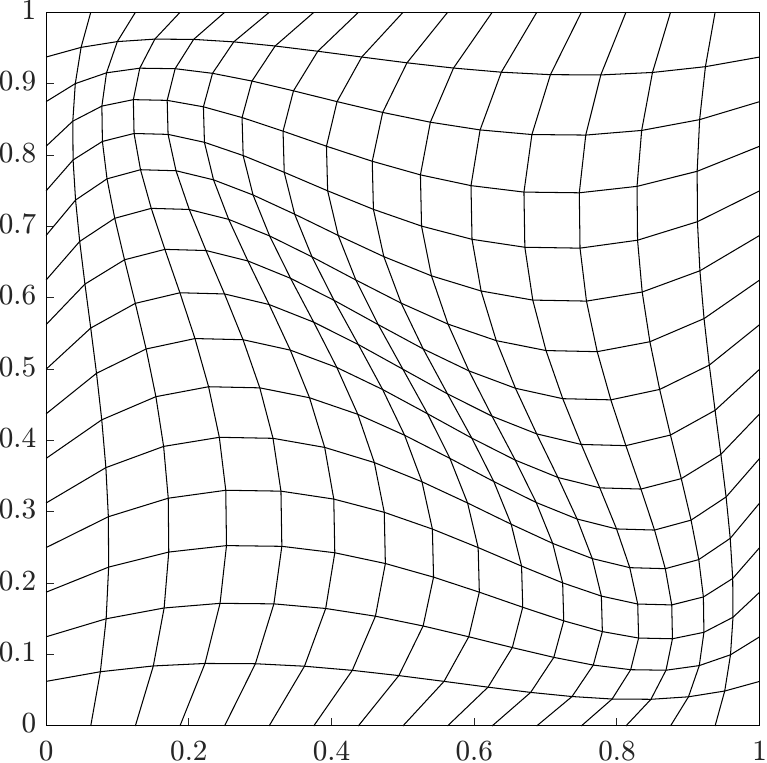}
    \caption{$\alpha=0.5$}
    \label{fig: 2D_Laplace_distorted_square_mesh}
     \end{subfigure}
     \hfill
    \caption{Unit square with (distorted) gridlines}
    \label{fig: 2D_Laplace_distorted_square}
\end{figure}

\begin{figure}[H]
     \centering
     \begin{subfigure}[t]{0.48\textwidth}
    \centering
    \includegraphics[width=\textwidth]{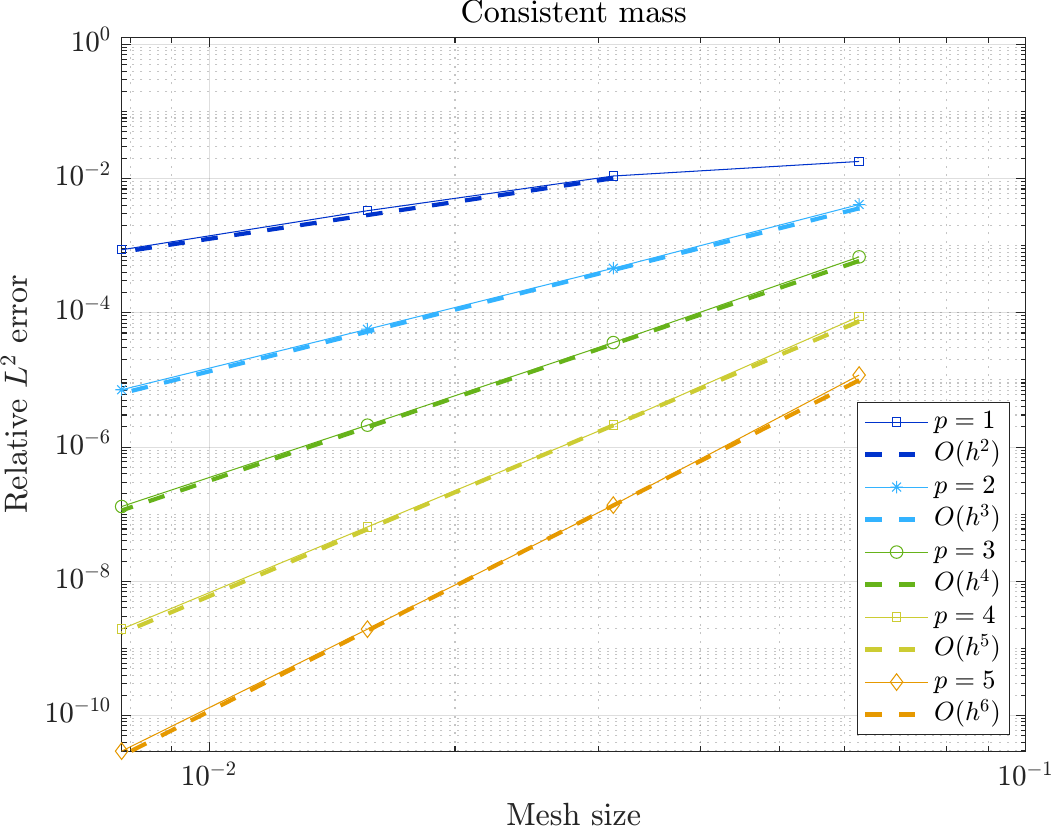}
    \caption{$\alpha=0$}
    \label{fig: 2D_Laplace_unit_square_dynamics_consistent}
     \end{subfigure}
     \hfill
     \begin{subfigure}[t]{0.48\textwidth}
    \centering
    \includegraphics[width=\textwidth]{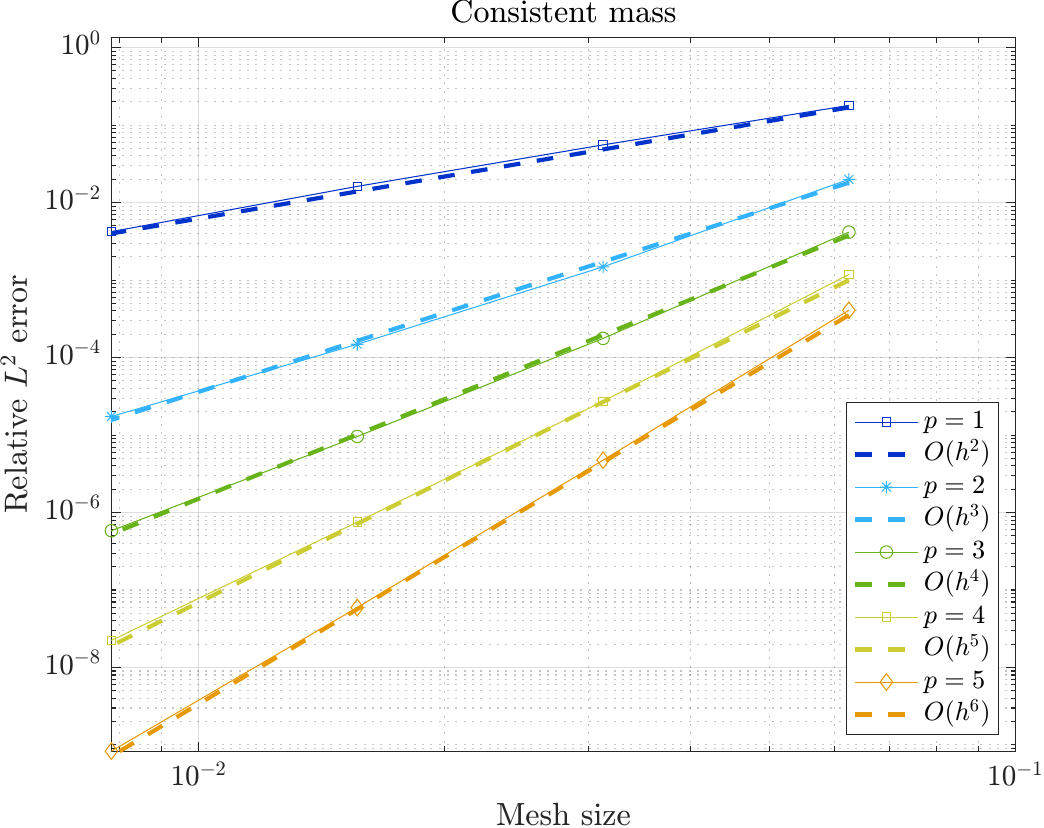}
    \caption{$\alpha=0.5$}
    \label{fig: 2D_Laplace_distorted_square_dynamics_consistent}
     \end{subfigure}
     \hfill
    \caption{Consistent mass}
    \label{fig: 2D_Laplace_square_dynamics_consistent}
\end{figure}

\begin{figure}[H]
     \centering
     \begin{subfigure}[t]{0.48\textwidth}
    \centering
    \includegraphics[width=\textwidth]{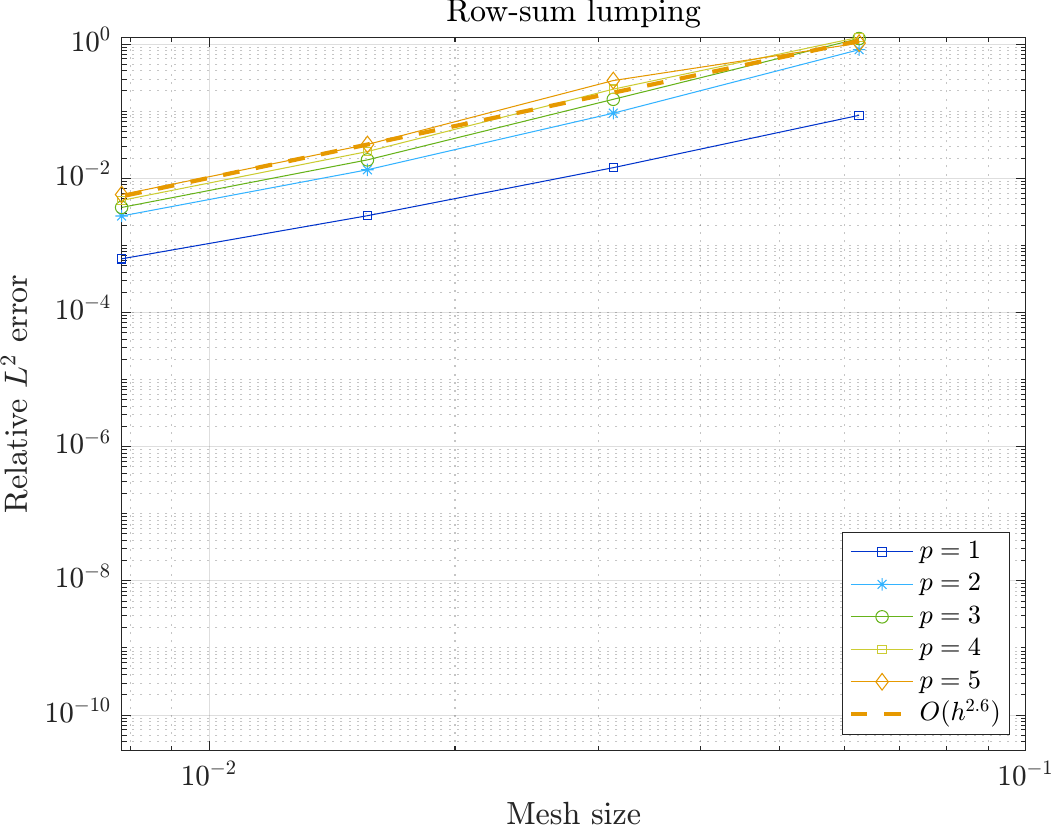}
    \caption{$\alpha=0$}
    \label{fig: 2D_Laplace_unit_square_dynamics_ML_Bspline}
     \end{subfigure}
     \hfill
     \begin{subfigure}[t]{0.48\textwidth}
    \centering
    \includegraphics[width=\textwidth]{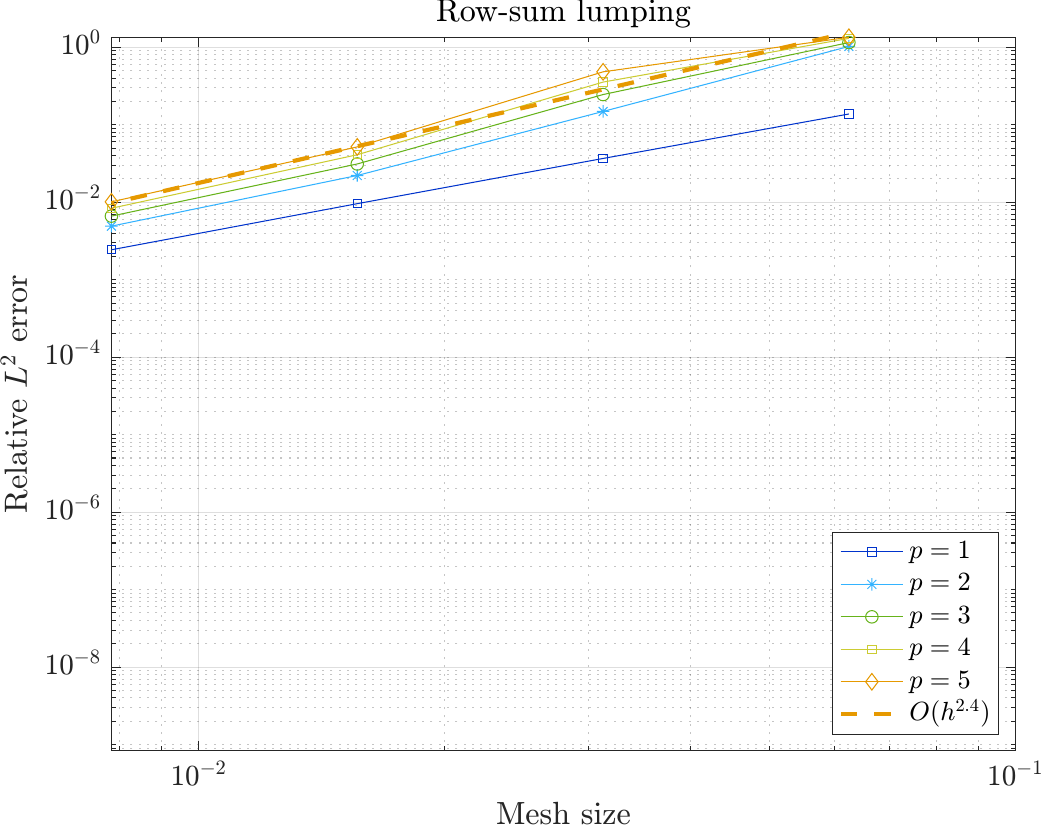}
    \caption{$\alpha=0.5$}
    \label{fig: 2D_Laplace_distorted_square_dynamics_ML_Bspline}
     \end{subfigure}
     \hfill
    \caption{Lumped mass (B-spline basis)}
    \label{fig: 2D_Laplace_square_dynamics_ML_Bspline}
\end{figure}

\begin{figure}[H]
     \centering
     \begin{subfigure}[t]{0.48\textwidth}
    \centering
    \includegraphics[width=\textwidth]{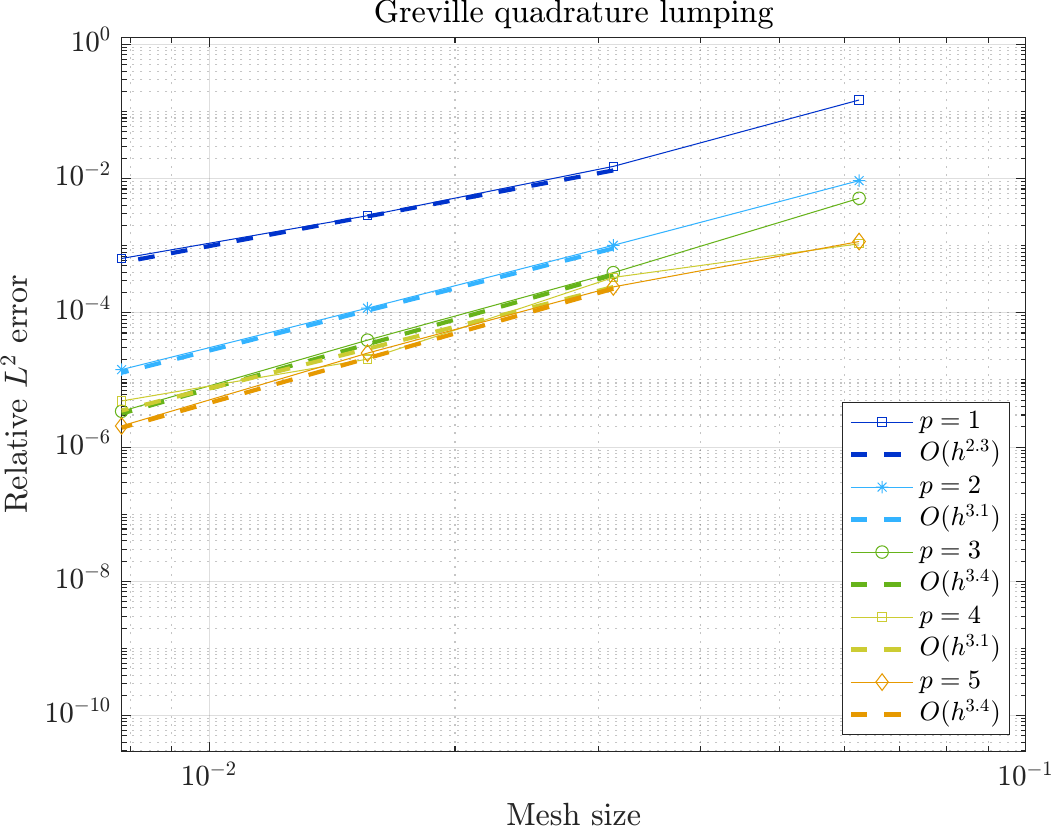}
    \caption{$\alpha=0$}
    \label{fig: 2D_Laplace_unit_square_dynamics_ML_Greville}
     \end{subfigure}
     \hfill
     \begin{subfigure}[t]{0.48\textwidth}
    \centering
    \includegraphics[width=\textwidth]{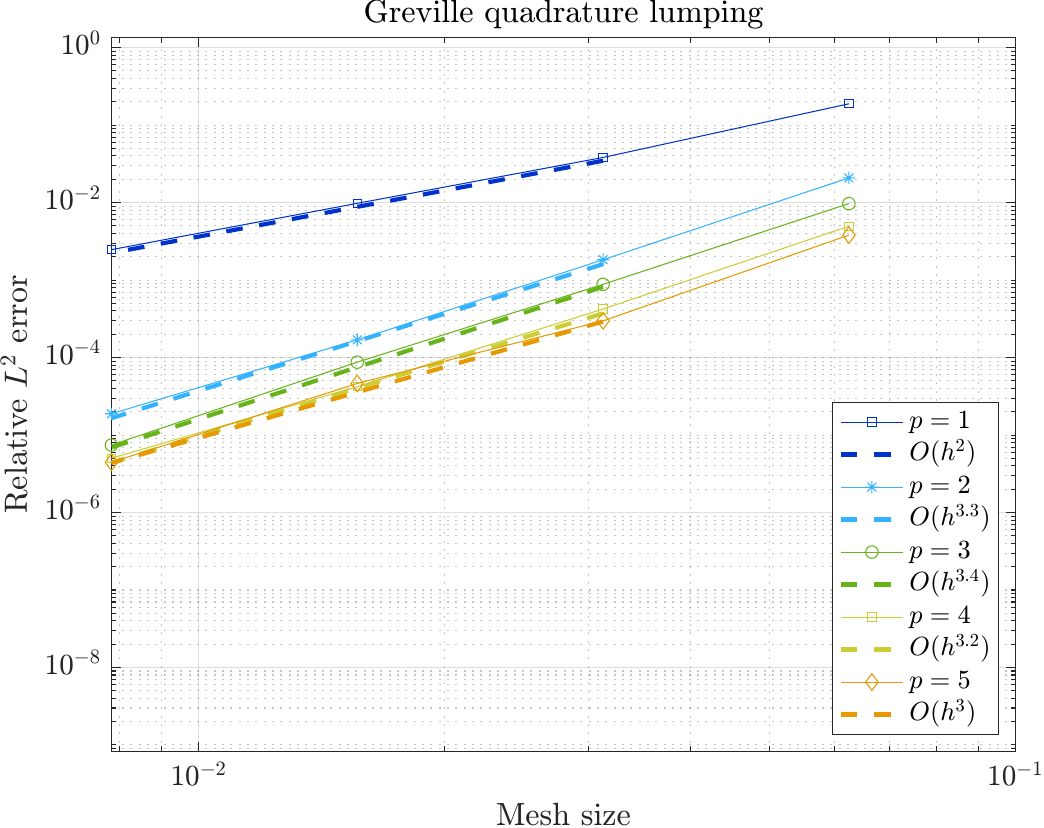}
    \caption{$\alpha=0.5$}
    \label{fig: 2D_Laplace_distorted_square_dynamics_ML_Greville}
     \end{subfigure}
     \hfill
    \caption{Lumped mass (Greville Lagrange spline basis)}
    \label{fig: 2D_Laplace_square_dynamics_ML_Greville}
\end{figure}

\begin{figure}[H]
     \centering
     \begin{subfigure}[t]{0.48\textwidth}
    \centering
    \includegraphics[width=\textwidth]{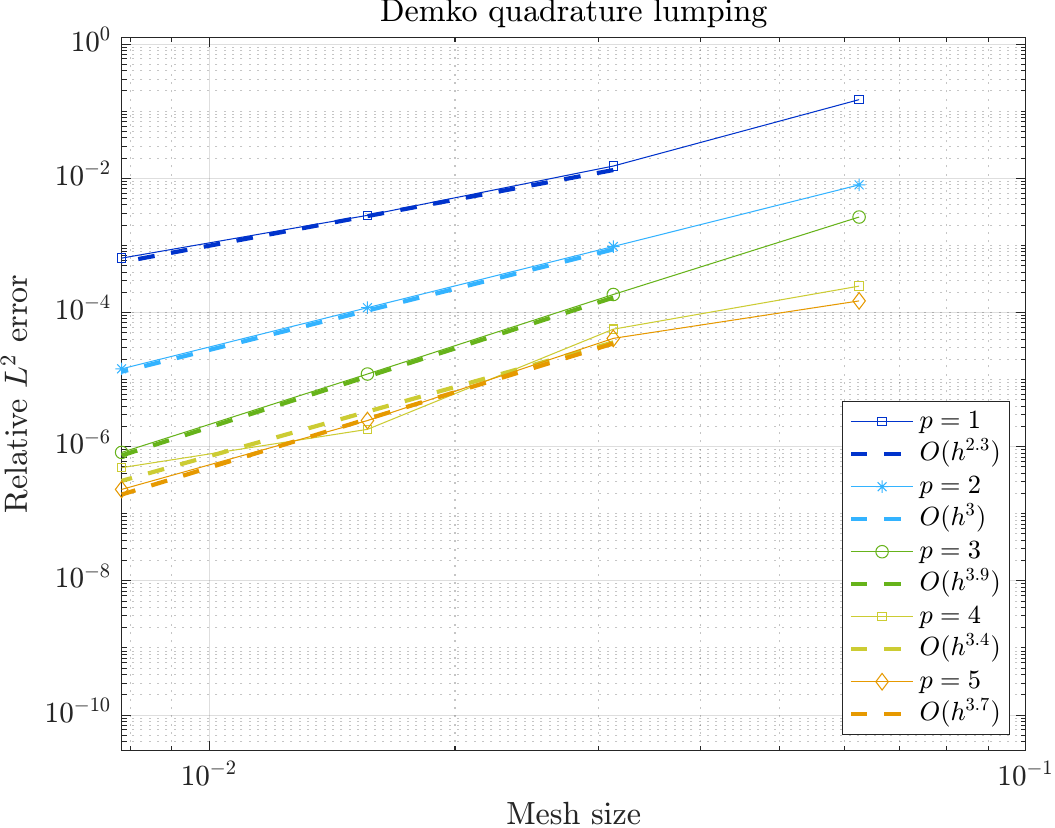}
    \caption{$\alpha=0$}
    \label{fig: 2D_Laplace_unit_square_dynamics_ML_Demko}
     \end{subfigure}
     \hfill
     \begin{subfigure}[t]{0.48\textwidth}
    \centering
    \includegraphics[width=\textwidth]{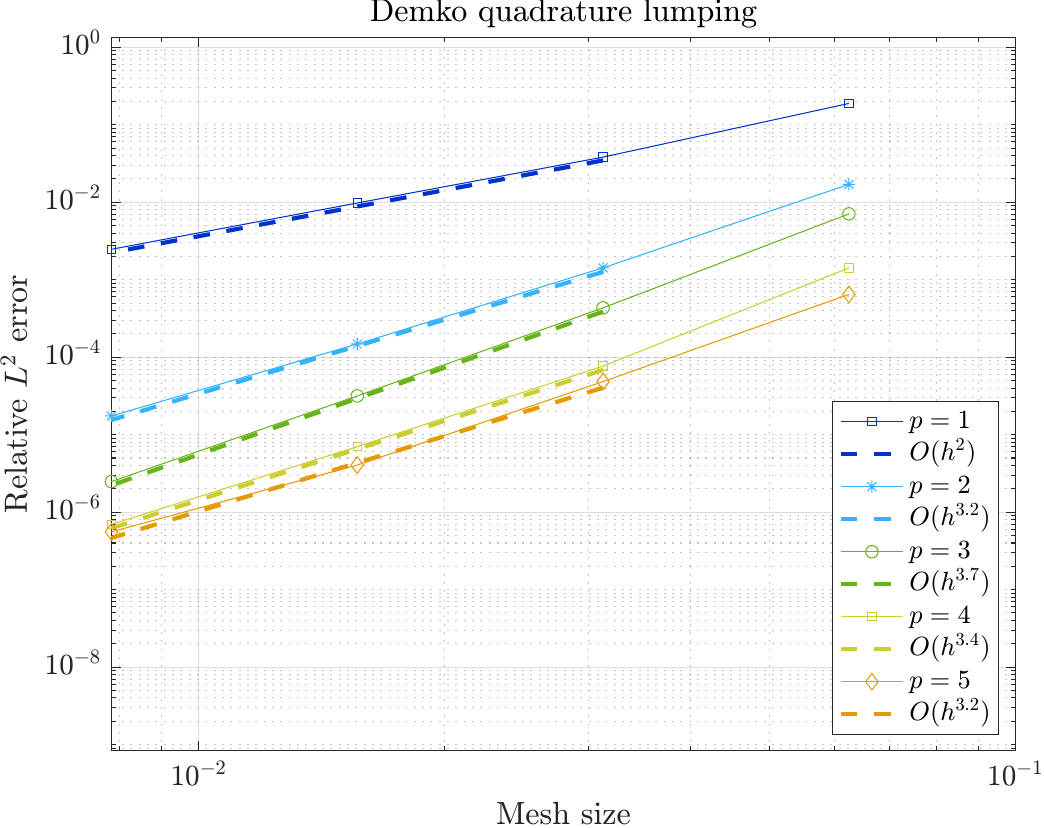}
    \caption{$\alpha=0.5$}
    \label{fig: 2D_Laplace_distorted_square_dynamics_ML_Demko}
     \end{subfigure}
     \hfill
    \caption{Lumped mass (Demko Lagrange spline basis)}
    \label{fig: 2D_Laplace_square_dynamics_ML_Demko}
\end{figure} 
\end{example}

\section{Conclusion}
\label{se: conclusion}
In this article, we have critically assessed the possibility of employing interpolatory spline bases for the purpose of mass lumping in isogeometric discretizations. Although non-interpolatory bases such as the B-spline basis are traditionally favored in IGA, restoring interpolation helps recover some of the critical properties that forged the success of SEM. However, extending those properties to spline functions has proven difficult and a full theory is still lacking. Most notably,
\begin{itemize}[noitemsep]
    \item The convergence rate observed for the smallest eigenfrequencies and the solution of the initial boundary value problem is unclear and does not necessarily correlate with the rate of the underlying spline quadrature rule. Unfortunately, the arguments for proving the convergence of SEM, based on the Strang lemma combined with the elementwise Bramble-Hilbert lemma, do not immediately extend to spline functions. Indeed, classical FEA revolves around elements whereas spline spaces are built from knot vectors and follow a slightly different paradigm. Substituting elements with knot spans is not always that simple. The same problem arises for the analysis of spline quadrature rules.
    \item Lumping the mass matrix for interpolatory bases may produce negative eigenvalues and unstable, potentially diverging, solutions. The Demko points appear to be an exception (see Conjecture \ref{conj: quadrature_condition}) but we were unable to prove it. Nevertheless, the theory for classical polynomial quadrature suggests that stability and positivity of the weights go hand in hand since a counter-example for one is typically also a counter-example for the other. Moreover, sufficient accuracy of the quadrature rule allows proving positivity of its weights. However, for the Demko points and spline quadrature, it remains an open problem.
    \item Similarly to classical FEM on uniformly spaced nodes, lumping the mass matrix for interpolatory spline bases may deteriorate the critical time step. However, just like classical FEM, this may be attributed to a poor choice of interpolation points. Although we have found a sufficient condition on the quadrature rule ensuring both positive weights and an increase of the critical time step, we are currently not aware of any explicit rule satisfying this condition. However, highly accurate quadrature rules might naturally resolve the issue, if one looks at how the proof of \Cref{lem: spectral_fem} could be extended to splines. Whether such rules are cheaply computable is yet another issue.
\end{itemize}
In summary, despite several appealing properties, interpolatory spline bases are still nowhere near mimicking their polynomial counterpart in SEM. While this article certainly lays a steady basis, a strong theory that parallels the nodal quadrature method is still sought for IGA. Nevertheless, higher convergence rates are encouraging and a better choice of interpolation points might further improve the accuracy but this is left as future work.

With interpolatory spline bases, IGA moves closer toward FEM. Extending the method to multi-patch geometries is also foreseeable since they are treated in the same way as elements for classical FEM. However, just like SEM, trimmed geometries might again become problematic, in particular regarding the choice of interpolation points. Beyond IGA, accurate mass lumping strategies on trimmed or immersed geometries are one of the most pressing challenges in computational mechanics \cite{radtke2024analysis,eisentrager2024eigenvalue,burchner2025efficiency,voet2025stabilization,guarino2025stabilization}.

\section*{Acknowledgement}
The second author was supported by the European Research Council under grant agreement 101141807.

\begin{appendices}
\section{Proof of \Cref{lem: spectral_fem}}
\label{se: proof_CFL_SEM}
The proof of \Cref{lem: spectral_fem} is mainly based on the observation of Teukolsky \cite{teukolsky2015short} that the element consistent and lumped mass matrices for SEM differ by a rank-$1$ update. \Cref{lem: spectral_fem} is recalled below and is substantiated with a detailed proof.

\begin{lemma}
For elementwise constant density and affine tensor product spectral elements, $\widehat{M} \succeq M$ and
\begin{equation*}
    \lambda_k(K,\widehat{M}) \leq \lambda_k(K,M).
\end{equation*}
\end{lemma}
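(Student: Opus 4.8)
The plan is to first prove the stronger Loewner inequality $\widehat{M}\succeq M$ by working entirely on a single affine tensor‑product element, and then to deduce the eigenvalue bound by a Courant--Fischer argument. Following Teukolsky \cite{teukolsky2015short}, I would start on the reference cube $\hat{\Omega}=[-1,1]^d$ and reduce to one parametric direction. In 1D, let $\hat{\varphi}_0,\dots,\hat{\varphi}_p$ be the Lagrange basis at the $p+1$ Gauss--Lobatto nodes $\{\hat{x}_k\}$, with leading coefficients $\ell_i$, and let $\hat{M}_1$ and $\widehat{M}_1=\diag(\hat{w}_0,\dots,\hat{w}_p)$ be the corresponding consistent and nodal‑quadrature mass matrices. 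Since $\hat{\varphi}_i\hat{\varphi}_j\in\mathbb{P}_{2p}$ while the Gauss--Lobatto rule is exact on $\mathbb{P}_{2p-1}$, the quadrature error $E_1(f):=\int_{-1}^{1}f-\sum_k\hat{w}_kf(\hat{x}_k)$ depends on $\hat{\varphi}_i\hat{\varphi}_j$ only through its leading coefficient $\ell_i\ell_j$, so
\begin{equation*}
(\hat{M}_1-\widehat{M}_1)_{ij}=E_1(\hat{\varphi}_i\hat{\varphi}_j)=\gamma\,\ell_i\ell_j,\qquad \gamma:=E_1(x^{2p}).
\end{equation*}
This is the rank‑one update: $\hat{M}_1-\widehat{M}_1=\gamma\,\bm{\ell}\bm{\ell}^{T}$ with $\bm{\ell}=(\ell_0,\dots,\ell_p)^{T}$.

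The first genuine point is the \emph{sign} of $\gamma$. I would invoke the Lobatto error representation: for $f\in C^{2p}$ there is $\xi\in(-1,1)$ with
\begin{equation*}
E_1(f)=\frac{f^{(2p)}(\xi)}{(2p)!}\int_{-1}^{1}(x^2-1)\,q(x)^2\,dx,\qquad q(x):=\prod_{k\ \mathrm{interior}}(x-\hat{x}_k),
\end{equation*}
whose kernel $(x^2-1)q(x)^2$ is nonpositive on $[-1,1]$; hence $\gamma=E_1(x^{2p})<0$. Therefore $\widehat{M}_1-\hat{M}_1=(-\gamma)\bm{\ell}\bm{\ell}^{T}\succeq0$, i.e.\ $\widehat{M}_1\succeq\hat{M}_1\succ0$ in one dimension. (This overshoot on even monomials is exactly what distinguishes Gauss--Lobatto from interior Gauss rules, which undershoot.)

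Next I would lift this to $d$ dimensions. Because both the basis and the quadrature rule are tensor products, the reference matrices factor as $\hat{M}=\hat{M}_1^{\otimes d}$ and $\widehat{M}=\widehat{M}_1^{\otimes d}$. Writing $R:=\widehat{M}_1-\hat{M}_1\succeq0$ and telescoping,
\begin{equation*}
\widehat{M}_1^{\otimes d}-\hat{M}_1^{\otimes d}=\sum_{k=1}^{d}\widehat{M}_1^{\otimes(k-1)}\otimes R\otimes\hat{M}_1^{\otimes(d-k)},
\end{equation*}
a sum of Kronecker products of positive semidefinite matrices, hence positive semidefinite; so $\widehat{M}\succeq\hat{M}$ on $\hat{\Omega}$. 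Under elementwise constant density $\rho_e$ and an affine map $F_e$, one has $M_e=\rho_e|\det J_e|\,\hat{M}$ and $\widehat{M}_e=\rho_e|\det J_e|\,\widehat{M}$, so $\widehat{M}_e\succeq M_e$ for every element; summing the congruences $P_e^{T}(\widehat{M}_e-M_e)P_e\succeq0$ over the Boolean assembly operators $P_e$ yields $\widehat{M}\succeq M\succ0$ globally.

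Finally, for the eigenvalues I would use Courant--Fischer for the pencils: for any $k$‑dimensional subspace $V$ and $0\neq v\in V$ one has $v^{T}Kv\ge0$ and $v^{T}\widehat{M}v\ge v^{T}Mv>0$, whence $v^{T}Kv/v^{T}\widehat{M}v\le v^{T}Kv/v^{T}Mv$, and passing to the min--max gives $\lambda_k(K,\widehat{M})\le\lambda_k(K,M)$ (equivalently, invoke \cite[Corollary 3.6]{voet2023mathematical} with $\widehat{M}\succeq M$). I expect the two delicate steps to be (i) pinning down $\gamma<0$ — the whole reason the inequality points the ``right'' way — and (ii) the tensor‑product step, since a naive expansion of $\widehat{M}_1^{\otimes d}-\hat{M}_1^{\otimes d}$ into rank‑one pieces produces terms of alternating sign; one really needs the monotone telescoping above rather than term‑by‑term bookkeeping.
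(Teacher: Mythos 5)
Your proposal is correct and shares the paper's key structural idea (traceable to Teukolsky): on an affine element with constant density the consistent and nodal-quadrature mass matrices differ by a positive semidefinite rank-one update, which is then tensorized, assembled, and fed into the min--max characterization (equivalently \cite[Corollary 3.6]{voet2023mathematical}). Where you genuinely diverge is in \emph{how} the rank-one update and its sign are obtained. The paper expands the Lagrange functions in the Legendre basis, computes the discrete norms $\gamma_k$ of the Legendre polynomials under the Gauss--Lobatto rule, and reads off the sign from the explicit constants $g_q = 2/(2q+1) < 2/q = \gamma_q$; you instead note that the quadrature error functional annihilates $\mathbb{P}_{2p-1}$, so it sees $\hat{\varphi}_i\hat{\varphi}_j$ only through the leading coefficient $\ell_i\ell_j$, giving the rank-one form in one line, and you get the sign from the Lobatto error kernel $(x^2-1)q(x)^2 \leq 0$. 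Both are valid; your route avoids the change-of-basis bookkeeping but leans on the Peano-type error representation for Lobatto rules, while the paper's explicit computation is more self-contained and makes transparent (as its closing remark notes) exactly why the argument breaks for non-constant density. Your treatment of the multivariate case is also cleaner and more self-contained: the telescoping identity
\begin{equation*}
\widehat{M}_1^{\otimes d}-\hat{M}_1^{\otimes d}=\sum_{k=1}^{d}\widehat{M}_1^{\otimes(k-1)}\otimes\bigl(\widehat{M}_1-\hat{M}_1\bigr)\otimes\hat{M}_1^{\otimes(d-k)}
\end{equation*}
exhibits the difference directly as a sum of Kronecker products of positive semidefinite factors, whereas the paper instead cites the eigenvalue results \cite[Theorem 3.26, Corollary 3.6]{voet2023mathematical} to conclude $0<\lambda_k(M_e,\widehat{M}_e)\leq 1$ for the tensor-product pencil. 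The only (cosmetic) restriction in your write-up is the assumption of equal degree in every direction implicit in writing $\hat{M}_1^{\otimes d}$; the telescoping argument works verbatim with direction-dependent factors $\hat{M}_{1,j}$, matching the paper's generality.
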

\begin{proof}
The first part of the proof mostly follows the arguments in \cite{teukolsky2015short}, although from a different perspective. We first prove the result for the 1D case and later extend it to multiple dimensions on tensor product elements. Given a positive weight $\eta_e>0$, we define the continuous inner product
\begin{equation}
\label{eq: inner_product}
    b_e(u,v) = \int_{-1}^1 \eta_e u(x)v(x) \, dx.
\end{equation}
Note that $b_e$ indeed takes this form when the integral is mapped to the reference element with $\eta_e=\rho |\det(J_e)|$. For the inner product in \eqref{eq: inner_product}, there exists a set of orthogonal polynomials $\dutchcal{P} = \{p_k\}_{k=0}^q$ such that $p_k \in \mathbb{P}_k$ and $b_e(p_k,p_j) = g_k \delta_{kj}$ for some constant $g_k >0$. Since $\eta_e$ is constant, those polynomials are simply rescaled Legendre polynomials and without loss of generality, we set $\eta_e=1$ such that the constants $g_k$ reduce to (see e.g. \cite{quarteroni2010numerical})
\begin{equation*}
    g_k = \frac{2}{2k+1}.
\end{equation*}
Associated to the Legrendre polynomials is the Gauss-Lobatto quadrature rule, denoted $\{x_k,w_k\}_{k=0}^q$, and defining the discrete inner product
\begin{equation*}
    \widehat{b}_e(u,v) = \sum_{k=0}^q w_k u(x_k)v(x_k).
\end{equation*}
To simplify the notation, we omit the hats over the quadrature nodes and weights (although they are still defined over the reference element). Since the $(q+1)$ point Gauss-Lobatto rule has degree of exactness $2q-1$, we deduce that $\widehat{b}_e(u,v)$ is exact if $uv \in \mathbb{P}_{2q-1}$. Consequently, for the Gauss-Lobatto rule,
\begin{equation*}
    \gamma_k := \widehat{b}_e(p_k, p_k) = 
    \begin{cases}
        g_k & k=0,\dots,q-1, \\
        \frac{2}{q} & k=q.
    \end{cases}
\end{equation*}
Nevertheless, since $p_q p_j \in \mathbb{P}_{2q-1}$ for $j \neq q$, it is still integrated exactly and $\{p_k\}_{k=0}^q$ remains orthogonal in the discrete inner product such that
\begin{equation}
\label{eq: orthogonality}
    \widehat{b}_e(p_k, p_k) = \gamma_k \delta_{kj}.
\end{equation}
Now we define the Lagrange interpolating polynomials at the Gauss-Lobatto points
\begin{equation*}
    \varphi_j(x) = \prod_{\substack{i=0 \\ i \neq j}}^q \frac{(x-x_i)}{(x_j-x_i)}.
\end{equation*}
The sets $\dutchcal{P} = \{p_k\}_{k=0}^q$ and $\Phi=\{\varphi_k\}_{k=0}^q$ are merely different bases for the same space $\mathbb{P}_q$. For relating the mass matrices in the Legendre and Lagrange bases, we need to find the basis transformation. For a polynomial $u \in \mathbb{P}_q$,
\begin{equation*}
    u(x) = \sum_{j=0}^q u_j \varphi_j(x) = \sum_{j=0}^q z_k p_k(x)
\end{equation*}
are the expansions of $u$ in the Lagrange and Legendre bases, respectively. Since the Lagrange basis is interpolatory, we immediately deduce that
\begin{equation}
\label{eq: u_to_b_GL}
    u_i = u(x_i) = \sum_{k=0}^q z_k p_k(x_i).
\end{equation}
Moreover, the orthogonality of the discrete inner product \eqref{eq: orthogonality} implies that
\begin{align*}
    \widehat{b}_e(u, p_k) = z_k \gamma_k &= \sum_{j=0}^q u_j \widehat{b}_e(\varphi_j, p_k), \\
    &= \sum_{j=0}^q \sum_{l=0}^q u_j w_l \varphi_j(x_l)p_k(x_l), \\
    &= \sum_{j=0}^q u_j w_j p_k(x_j),
\end{align*}
from which we deduce that
\begin{equation}
\label{eq: b_to_u_GL}
    z_k = \frac{1}{\gamma_k} \widehat{b}_e(u, p_k) = \frac{1}{\gamma_k} \sum_{j=0}^q u_j w_j p_k(x_j).
\end{equation}
Thus, for the expansion of the Lagrange basis functions in the Legendre basis $\varphi_j(x) = \sum_{k=0}^q a_{kj}p_k(x)$, we deduce from \eqref{eq: b_to_u_GL} that
\begin{equation*}
    a_{kj} = \frac{1}{\gamma_k} \sum_{l=0}^q \delta_{jl} w_l p_k(x_l) = \frac{w_j}{\gamma_k}p_k(x_j)
\end{equation*}
and consequently,
\begin{equation}
\label{eq: Lagrange_to_Legendre}
    \varphi_j(x) = \sum_{k=0}^q \frac{w_j}{\gamma_k} p_k(x_j)p_k(x).
\end{equation}
Now, for the consistent mass matrix, we obtain
\begin{align*}
    (M_e)_{ij} &= b_e(\varphi_i, \varphi_j) \\
    &=\sum_{k=0}^q \sum_{l=0}^q w_i w_j \frac{1}{\gamma_k \gamma_l} p_k(x_i)p_l(x_j) b_e( p_k, p_l) \\
    &=\sum_{k=0}^q \sum_{l=0}^q w_i w_j \frac{1}{\gamma_k \gamma_l} p_k(x_i)p_l(x_j) \delta_{kl} g_k \\
    &=\sum_{k=0}^q w_i w_j \frac{g_k}{\gamma_k^2} p_k(x_i)p_k(x_j) \\
    &= \sum_{k=0}^q w_i w_j \frac{1}{\gamma_k} p_k(x_i)p_k(x_j) + \left(\frac{g_q}{\gamma_q^2} - \frac{1}{\gamma_q}\right) w_iw_j p_q(x_i)p_q(x_j) \\
    &= w_i \varphi_j(x_i) + \left(\frac{g_q}{\gamma_q^2} - \frac{1}{\gamma_q}\right) w_iw_j p_q(x_i)p_q(x_j)
\end{align*}
where the last equation follows from \eqref{eq: Lagrange_to_Legendre}. Noticing that $(\widehat{M}_e)_{ij} = w_i \varphi_j(x_i) = w_i \delta_{ij}$, we obtain
\begin{equation*}
    (M_e)_{ij} = (\widehat{M}_e)_{ij} + \left(\frac{g_q}{\gamma_q^2} - \frac{1}{\gamma_q}\right) w_iw_j p_q(x_i)p_q(x_j).
\end{equation*}
This last expression may also be written as
\begin{equation*}
    M_e = \widehat{M}_e - \alpha \bm{v}\bm{v}^T,
\end{equation*}
with $\alpha=(\gamma_q-g_q)/\gamma_q^2$ and $v_i = w_i p_q(x_i)$, indicating that $M_e$ and $\widehat{M}_e$ differ by a rank-$1$ update. Moreover, since
\begin{equation*}
    g_q = \frac{2}{2q+1} < \frac{2}{q} = \gamma_q,
\end{equation*}
we deduce that $\alpha > 0$ and consequently
\begin{equation*}
    \widehat{M}_e = M_e + \alpha \bm{v}\bm{v}^T \succeq M_e.
\end{equation*}
Finally, since this holds for all elements in the mesh $\widehat{M} \succeq M$ also holds globally (see e.g. \cite[Lemma 3.22]{voet2025mass}) and consequently \cite[Corollary 3.6]{voet2023mathematical} implies that
\begin{equation*}
    \lambda_k(K,\widehat{M}) \leq \lambda_k(K,M) \quad k=1,\dots,n.
\end{equation*}
For the $d$-dimensional case, from the tensor product nature of the basis functions and the fact that $\eta_e$ is constant, we find that $M_e = M_{e,1} \otimes \dots \otimes M_{e,d}$, where $M_{e,j}=\widehat{M}_{e,j}-\alpha_j \bm{v}_j \bm{v}_j^T$ and $\widehat{M}_{e,j} = \diag(w_{0,j},\dots,w_{q_j,j})$ contains the weights along the $j$th parametric direction. Following the arguments in \cite[Theorem 3.26]{voet2023mathematical}, we conclude that $0 < \lambda_k(M_e,\widehat{M}_e) \leq 1$ for all $k=1,\dots,m$ with $m=\prod_{j=1}^d(q_j+1)$ and thanks to \cite[Corollary 3.6]{voet2023mathematical}, the result still holds.
\end{proof}

\begin{remark}
Extending the proof to a weighted inner product with a non-constant weight $\eta_e(x)$ does not appear straightforward since the explicit expressions of $g_q$ and $\gamma_q$ are required for deducing the sign of $\alpha$ and concluding that $\widehat{M}_e$ is a positive semidefinite low-rank update of $M_e$. Thus, we cannot conclude that the result holds for varying density functions or isoparametric elements.
\end{remark}

\section{Positivity of Demko weights on non-uniform meshes}
\label{se: non_uniform_meshes}
As we have seen in \Cref{tab: test_conditions}, negative Greville quadrature weights arise for spline spaces of intermediate smoothness on uniform meshes. This appendix collects a few counter-examples for non-uniform meshes. Although negative Greville weights were encountered in all cases, the Demko weights always remained positive, even for the most absurd knot vectors.
\begin{enumerate}[noitemsep]
    \item We first construct the knot vector for a uniform quadratic $C^1$ discretization of the unit line with $32$ subdivisions. The $10$th and $11$th knot values are then assigned to
    \begin{align*}
        \xi_{10} &= \xi_{9}+10^{-6}, \\
        \xi_{11} &= \xi_{9}+10^{-3},
    \end{align*}
    while keeping the other knot values unchanged. The resulting Greville weights are all positive, except $w_8$.
    \item We consider the same initial setup but with only $16$ subdivisions. The first knot span is uniformly refined with $100$ subdivisions. In this case, all weights are positive, except $w_{100}$.
    \item Finally, we consider a maximally smooth quartic discretization constructed from the non-uniform knot vector
    \begin{equation*}
        \Xi = 1/26(0,0,0,0,0,1,11,16,21,26,26,26,26,26).
    \end{equation*}
    Here, $w_1<0$ while all other weights are positive.
\end{enumerate}

The norm of the quadrature operator, its conjectured upper bound from \Cref{conj: quadrature_condition} and its provable upper bound from \cref{eq: quadrature_condition} are reported in \Cref{tab: norm_quad_greville} for up to four digits of accuracy.

\begin{table}[H]
\centering
\begin{tabular}{llll}
    \toprule
    Example & $\|Q\|$ & $\|c\|I(1)$ & $\|A^{-1}\|I(1)$ \\
    \midrule
    1 & 1.0015 & 1.6301 & 3.0898 \\
    2 & 1.0017 & 1.3854 & 2.5742 \\
    3 & 1.0201 & 1.3370 & 8.6653 \\
    \bottomrule
\end{tabular}
\caption{Norm of the quadrature operator $\|Q\|$, conjectured upper bound $\|c\|I(1)$ and provable upper bound $\|A^{-1}\|I(1)$ for the Greville quadrature and Examples 1-3.}
\label{tab: norm_quad_greville}
\end{table}

In contrast, the Demko quadrature weights were positive in all cases.

\section{High order explicit Runge Kutta methods}
\label{se: explicit_rk_methods}
This appendix briefly explains how to obtain the CFL constants for some of the less common high order time integration methods used in this work. Indeed, structural dynamics is less accustomed to high order techniques than it is to low order ones, commonly presented in standard textbooks \cite{hughes2012finite,bathe2006finite}. Although research for high order techniques in structural dynamics has lately gained momentum (see e.g. \cite{evans2018explicit,behnoudfar2022explicit,song2022high}), we will adopt explicit high order Runge Kutta (RK) methods for simplicity and ease of implementation. The material in this section is fairly standard and is only included for the sake of deriving the CFL conditions. For a broader overview of RK methods, interested readers may refer to \cite{quarteroni2010numerical,hairer1993solving,wanner1996solving} among many other references. Our point of departure is the semi-discrete problem \eqref{eq: semi_discrete_pb}, rewritten as an extended system of first order ODEs. By introducing the unknown $\bm{v}(t)=\dot{\bm{u}}(t)$, the semi-discrete problem becomes a coupled system of first order ODEs
\begin{align*}
\begin{split}
\dot{\bm{u}}(t) - \bm{v}(t) &= \bm{0}, \\
M\dot{\bm{v}}(t) + K\bm{u}(t) &= \bm{f}(t), \\
\bm{u}(0) &= \bm{u}_0,\\
\bm{v}(0) &= \bm{v}_0,
\end{split}
\end{align*}
which can be rewritten as a standard first order system
\begin{equation}
\label{eq: linearized_pb}
    \dot{\mathbf{u}}(t) = \mathcal{A}\mathbf{u}(t) + \mathbf{f}(t),
\end{equation}
where
\begin{equation*}
    \mathcal{A}=
    \begin{pmatrix}
        0 & I \\
        -M^{-1}K & 0
    \end{pmatrix},
    \quad \mathbf{u}(t)=
    \begin{pmatrix}
        \bm{u}(t) \\
        \bm{v}(t)
    \end{pmatrix},
    \quad \text{and} \quad \mathbf{f}(t)=
    \begin{pmatrix}
        \bm{0} \\
        M^{-1}\bm{f}(t)
    \end{pmatrix}.
\end{equation*}
The solution of \cref{eq: linearized_pb} can then be approximated with standard RK methods. These methods are stable if the eigenvalues of $\Delta t \mathcal{A}$ are within the (absolute) stability region
\begin{equation}
\label{eq: stability_region}
    \mathcal{S} = \{z \in \mathbb{C} \colon |R(z)| < 1\},
\end{equation}
where
\begin{equation*}
    R(z) = 1 + z\mathbf{b}(I-zA)^{-1}\mathbf{1}
\end{equation*}
is the stability function and 
\begin{equation*}
\begin{array}
{c|c}
\mathbf{c} & A \\
\hline
& \mathbf{b}
\end{array}
=
\begin{array}
{c|cccc}
c_1 & a_{11} & \hdots & a_{1s} \\
\vdots & \vdots & \ddots & \vdots \\
c_s & a_{s1} & \hdots & a_{ss} \\
\hline
& b_1 & \hdots & b_s
\end{array}
\end{equation*}
is the so-called Butcher Tableau containing the coefficients defining the RK method; see e.g. \cite{quarteroni2010numerical,hairer1993solving,wanner1996solving}. Note that $\mathbf{b}$ is a row vector whereas $\mathbf{1}$ is a column vector of all ones. For explicit RK methods, $A$ is strictly lower triangular and the stability function reduces to $R(z) = \det(I-z(A-\mathbf{1}\mathbf{b}))$, a polynomial function in $z$ \cite{quarteroni2010numerical}. In the sequel, RK$(s,p)$ denotes an explicit RK method with $s$ stages and order $p$ (with $s \geq p$). For $p \leq 4$, explicit RK methods with $s=p$ stages are known and their stability function is given by
\begin{equation*}
    R(z) = \sum_{k=0}^s \frac{z^k}{k!}.
\end{equation*}
Therefore, for $p \leq 4$, two different methods with the same order yield the same stability region. However, for $p \geq 5$, $s>p$ \cite[Theorem 5.1]{hairer1993solving} and different methods generally have different stability functions and therefore different stability regions. For choosing a suitable method, we must know where the eigenvalues of $\mathcal{A}$ lie. The result is given in the next lemma.

\begin{lemma}
If $M$ is positive definite,
\begin{equation*}
    \Lambda(\mathcal{A})=\{ \pm i \sqrt{\lambda} \colon \lambda \in \Lambda(K,M)\}.
\end{equation*}
\end{lemma}
\begin{proof}
If $M$ is positive definite, there exists an invertible matrix of $M$-orthonormal eigenvectors $U$ and a diagonal matrix of eigenvalues $D$ such that $U^TMU=I$ and $U^TKU=D=\diag(\lambda_1,\dots,\lambda_n)$ \cite[Theorem VI.1.15]{stewart1990matrix}. Equivalently, $M^{-1}K=UDU^{-1}$ is the spectral decomposition of $M^{-1}K$. From the factorization
\begin{equation*}
\mathcal{A}=
    \begin{pmatrix}
        0 & I \\
        -M^{-1}K & 0
    \end{pmatrix}
    =
    \begin{pmatrix}
        U & 0 \\
        0 & U
    \end{pmatrix}
    \begin{pmatrix}
        0 & I \\
        -D & 0
    \end{pmatrix}
    \begin{pmatrix}
        U^{-1} & 0 \\
        0 & U^{-1}
    \end{pmatrix},
\end{equation*}
we deduce that $\mathcal{A}$ is similar to 
\begin{equation*}
    \begin{pmatrix}
        0 & I \\
        -D & 0
    \end{pmatrix},
\end{equation*}
which is itself (unitarily) similar to the block-diagonal matrix $\mathcal{D}=\diag(D_1,\dots,D_n)$ \cite{golub2003solving}, where
\begin{equation*}
    D_j=
    \begin{pmatrix}
        0 & 1 \\
        -\lambda_j & 0
    \end{pmatrix}.
\end{equation*}
Since $\Lambda(D_j)=\{-i\sqrt{\lambda_j},i\sqrt{\lambda_j}\}$, we finally deduce that
\begin{equation*}
    \Lambda(\mathcal{A})=\{ \pm i \sqrt{\lambda} \colon \lambda \in \Lambda(K,M)\}.
\end{equation*}
\end{proof}
Since the eigenvalues of $\mathcal{A}$ are purely imaginary, we must select methods that maximize the stability region along the imaginary axis. Stabilized RK methods have extensively been studied in the literature \cite{wanner1996solving,van1996development,doehring2024many} but enlarging the stability along the imaginary axis usually comes at the pricing of sacrificing some accuracy. Thus, we will merely select classical high order methods that have satisfactory stability along the imaginary axis. For $p \leq 4$, this does not leave any choice, and in fact the stability region of classical explicit RK$(2,2)$ schemes does not include any of the imaginary axis at all, but fortunately classical RK$(3,3)$ and RK$(4,4)$ include a reasonable portion of the imaginary axis (see \Cref{fig: stability_regions}). However, for $p \geq 5$, choosing the right method becomes important. Among high order explicit methods is a well-known RK$(6,5)$ method, originally proposed by Kutta and corrected by Nyström \cite{butcher1996history}, which includes a relatively large portion of the imaginary axis. Unfortunately, the same cannot be said for some of Butcher's RK$(7,6)$ methods \cite{butcher1964runge}. The stability region of those methods barely intersects the imaginary axis and we instead chose Verner's RK$(8,6)$ method \cite{hairer1993solving}, thereby trading some efficiency for greater stability. The Butcher tableaux of the high order RK methods used in this work are found among the references cited and their stability regions are shown in \Cref{fig: stability_regions}. From those regions, we deduce the step size restriction
\begin{equation*}
    \Delta t \leq \frac{C}{\sqrt{\lambda_n(K,M)}}
\end{equation*}
where the \emph{linear stability imaginary axis inclusion} \cite{macdonald2003constructing}
\begin{equation*}
    C = \sup \{y \colon iy \in \mathcal{S}\}
\end{equation*}
is computed numerically. There certainly exist better choices of explicit high order integration techniques in structural dynamics and we think this problem deserves greater attention. %in future work.

\begin{figure}[H]
    \centering
    \includegraphics[scale=0.5]{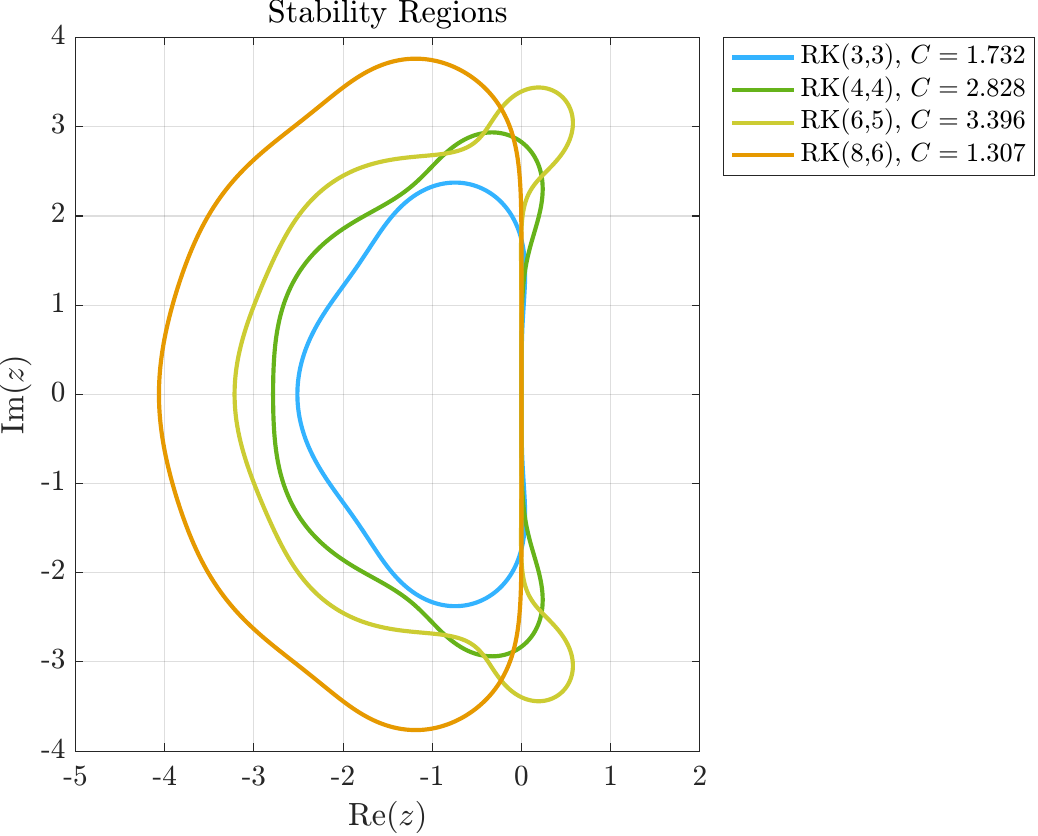}
    \caption{Stability regions of some explicit RK methods}
    \label{fig: stability_regions}
\end{figure}

\end{appendices}

\end{document}